\newcommand {\be}{\begin{enumerate}}
\newcommand {\ee}{\end{enumerate}}
\newcommand {\Z} {\mathbb Z}
\newcommand {\startenv} {\begin{quote}\begin{tabular}{||l}\parbox[t]{12cm}}
\newcommand {\stopenv} {\end{tabular}\end{quote}}
\newcommand {\TV} {\mathrm{TV}}
\newtheorem{theorem}{Theorem}[section]
\newtheorem{corollary}{Corollary}[section]
\newtheorem{definition}{Definition}[section]
\newtheorem*{definition*}{Definition}
\newtheorem{lemma}{Lemma}[section]
\newtheorem{remark}{Remark}
\newtheorem{requirement}{Requirement}
\newtheorem*{requirement*}{Requirement}
\begin{document}
\title{Adaptive Mesh Reconstruction \\ Total Variation Bound}
\author{Nikolaos Sfakianakis\footnote{University of Vienna, 1090 Vienna, Austria}}
\maketitle
\bibliographystyle{amsalpha}
\begin{abstract}
We consider 3-point numerical schemes for scalar Conservation Laws, that are oscillatory either to their dispersive or anti-diffusive nature. Oscillations are responsible for the increase of the Total Variation (TV); a bound on which is crucial for the stability of the numerical scheme. It has been noticed (\cite{Arvanitis.2001}, \cite{Arvanitis.2004}, \cite{Sfakianakis.2008}) that the use of non-uniform adaptively redefined meshes, that take into account the geometry of the numerical solution itself, is capable of taming oscillations; hence improving the stability properties of the numerical schemes. 

In this work we provide a model for studying the evolution of the extremes over non-uniform adaptively redefined meshes. Based on this model we prove that proper mesh reconstruction is able to control the oscillations; we provide bounds for the Total Variation (TV) of the numerical solution. We moreover prove under more strict assumptions that the increase of the TV -due to the oscillatory behaviour of the numerical schemes- decreases with time; hence proving that the overall scheme is TV Increase-Decreasing (TVI-D).
\end{abstract}
\section{Outline}\label{Section.Outline}
We study the scalar Conservation Law in one space variable
$$u_t+f(u)_x=0,\quad x\in[0,1],\ t\in[0,T]$$
where the flux function $f$ is considered to be smooth and convex. The initial condition throughout this work is:
$$u_0(x)=\begin{cases} 1,& x\leq x_0\\0, &x>x_0 \end{cases},\quad\mbox{ for }x_0\in(0,1)$$

We discretise spatially over a non-uniform, adaptively redefined mesh, 
$$M^n=\left\{0=x_0^n<x_1^n<\cdots <x_N^n=1\right\}$$

The manipulation of the non-uniform mesh and the time evolution are combined into the Main Adaptive Scheme (MAS): 
\begin{definition*}[Main Adaptive Scheme (MAS)]
  Given, at time step $n$, the mesh $M_x^n=\{a=x_1^n<\cdots<x_N^n=b\}$ and the approximations $U^n=\{u_1^n,\ldots,u_N^n\}$, the steps of the (MAS) are as 
  follows:
  \begin{itemize}
    \item[1.](Mesh Reconstruction)\\
      Construct new mesh $M_x^{n+1}=\{a=x_1^{n+1}<\cdots<x_N^{n+1}=b\}$
    \item[2.](Solution Update)\\
      Using the old mesh $M_x^n$ the approximations $U^n$ and the new mesh $M_x^{n+1}$:
      \begin{itemize}
         \item[2a.] construct a piecewise linear function $V^n(x)$ such that $V^n|_{M_x^n}=U^n$ 
         \item[2b.] define the updated approximations $\hat U^n=\{\hat u_i^n,\ldots,\hat u_N^n\}$ as $\hat U^n= V^n|_{M_x^{n+1}}$
      \end{itemize}
    \item[3.](Time Evolution)\\
      Use the new mesh $M_x^{n+1}$, the new approximations $\hat U^n$ and the numerical scheme to march in time and compute 
      $U^{n+1}=\{u_1^{n+1},\ldots,u_N^{n+1}\}$
    \item[4.](Loop)\\
      Repeat the Step 1.-3. with $M_x^{n+1}$, $U^{n+1}$ as initial data.
  \end{itemize}
\end{definition*}
We shall discuss the MAS in more details in section \ref{Section.MAS}, for the moment we shall make some brief notes on the Steps 1.-3.
\begin{remark}
  Regarding the mesh reconstruction (Step 1.), we note that it is performed in each time step and is the crucial ingredient of the MAS. This procedure creates a 
  new non-uniform mesh $M_x^{n+1}$ with the same number of nodes as the previous one $M_x^n$. The construction of the  new mesh depends on the geometry of the 
  numerical solution itself. Regarding the solution update (Step 2.) and time evolution (Step 3.), we note that in this work we consider interpolation over piecewise 
  linear functions and Finite Difference schemes. The proper setting for conservative solution update and Finite Volume schemes will be addressed in a different work.
\end{remark}

The objective of this work is to place conditions on the steps of MAS such that the resulting numerical solutions are TV stable even when oscillatory numerical schemes are used for the time evolution (Step 3.).  More specifically, we shall prove that proper non-uniform meshes are able to tame the TV increase due to oscillations, furthermore we shall prove that in some cases the TV increase due to oscillation decreases with time; hence yielding a Total Variation Increase diminishing (TVID) scheme. 


In section \ref{Section.Requirements} of this work we list and explain the requirements that we place on the MAS. In section \ref{Section.Analysis}, we discuss the creation and propagation of oscillations at the level of extremes. We present a model for the extremes that takes into account the steps of MAS. Based then on the model we prove the TV results of this work. In section \ref{Section.MAS} we discuss the Main Adaptive Scheme (MAS) in more detail. We analyse the way non-uniform meshes are constructed and how the numerical solution is updated over the new mesh. We discuss the numerical implementation of the MAS and present some graphs depicting its basic properties. In section \ref{Section.Considerations} we discuss the numerical implementation of the requirements introduced in section \ref{Section.Requirements}. In section \ref{Section.Numerics} we perform numerical tests, where we consider known oscillatory numerical schemes and prove that these schemes satisfy the requirements introduced in section \ref{Section.Requirements}. We provide comparative numerical results for both uniform and non-uniform meshes.

\section{Requirements}\label{Section.Requirements}


In this section we present the requirements that we place on the steps of the MAS. We once again mention that we perform interpolation over piecewise linear functions for the Solution Update (Step 2.) and that we use oscillatory Finite Difference schemes for the Time Evolution (Step 3.). The proper setting for dealing with Finite Volume schemes with a conservative reconstruction shares many things in common with this work but will be presented separately. 

Let $M_x^n=\{x_i^n,i=1,\ldots,N\}$ be the initial mesh, $U^n=\{u_1^n,\ldots, u_N^n\}$ the initial approximations at the time step $n$, and $M_x^{n+1}=\{x_j^{n+1},j=1,\ldots,N\}$, $\hat U^n=\{\hat u_1^n,\ldots,\hat u_N^n\}$ be the new mesh and updated approximations yielding from the Steps 1. and 2. of the MAS.

To introduce the first requirement, we recall that the development of this work does not assume the use of any specific numerical scheme for time evolution (Step 3.). In the contrary the discussion that will take place and the proofs that will be presented are valid for every numerical scheme that satisfies the \emph{Evolution requirement}:

\begin{requirement}[Evolution requirement]\label{EvolutionReq}
  There exists a constant $C>0$ independent of the time step $n$ and the node $i$ such that,
  \begin{equation}
    |u_i^{n+1}-\hat u_i^n|\leq C\max\left\{|\hat u_{i+1}^n-\hat u_i^n|,|\hat u_i^n-\hat u_{i-1}^n|\right\}
  \end{equation}
\end{requirement}

\begin{remark}
  The meaning of this requirement is that the numerical scheme, which is responsible for the time evolution (Step 3.), does not produce abrupt results. We shall see 
  in the examples addressed in section \ref{Section.Numerics} that the constant $C$ is an increasing function of the CFL condition. For every scheme that we will use, we shall prove this 
  requirement and also compute the dependence of the constant $C$ to the $CFL$ condition.
\end{remark}

We move on to the second requirement, which is placed on the mesh reconstruction procedure (Step 1).  We first start with a definition,
\begin{definition}
  We say that the approximate solution $U^n=\{u_i^n,i=1,\ldots,N\}$ defined over the mesh $M_x^n=\{x_i^n,i=1,\ldots,N\}$ exhibits a local extreme at the node $x_i^n$ if $u_i^n>u_{i-1}^n,u_{i+1}^n$ (local maximum) or $u_i^n<u_{i-1}^n,u_{i+1}^n$ (local minimum).
\end{definition}

\begin{requirement}[Mesh requirement: $\lambda$-rule]\label{l-ruleReq}
  There exists a constant $0<\lambda<1$ independent of $n$ and $i$ such that, if $x_j^{n+1}\in[x_i^n,x_{i+1}^n]$ and $U^n$ exhibits a local extreme at the node $i$ 
  (resp. $i+1$) then
  \begin{equation}\label{l-ruleEq1}
    x_j^{n+1}-x_i^n > (1-\lambda) (x_{i+1}^n-x_i^n)\quad \Big( \mbox{respectively}\quad x_{i+1}^n-x_j^{n+1} > (1-\lambda) (x_{i+1}^n-x_i^n)\Big)
  \end{equation}
  or equivalently
  \begin{equation}\label{l-ruleEq2}
    x_{i+1}^n-x_j^{n+1} < \lambda (x_{i+1}^n-x_i^n)\quad \Big( \mbox{respectively}\quad x_j^{n+1} -x_i^n < \lambda (x_{i+1}^n-x_i^n)\Big)
  \end{equation}

\end{requirement}
\begin{figure}[t]
  \begin{center}  \input{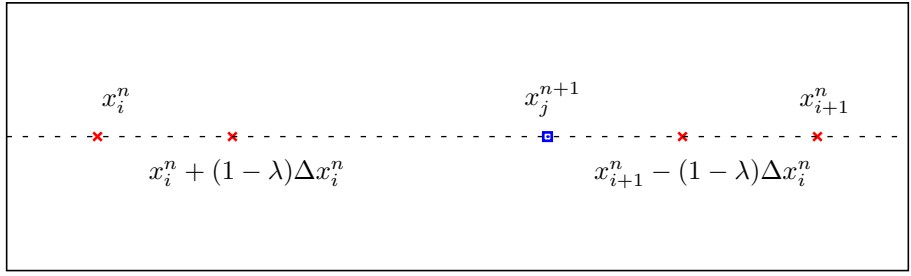}  \end{center}
    \caption{The $\lambda$-rule states that the \emph{new} node $x_{j}^{n+1}\in[x_i^n,x_{i+1}^n]$ should avoid the \emph{old} node $x_i^n$ (respectively $x_{i+1}^n$), 
             where the approximate solution might exhibit an extreme, in the sense: $x_{i}^n+(1-\lambda)\Delta x_{i}^n\leq x_{j}^{n+1}$ 
             (respectively $x_{j}^{n+1}\leq x_{i+1}^n-(1-\lambda)\Delta x_{i}^n$), where $\Delta x_{i}^n=x_{i+1}^n-x_{i}^n$
            }\label{Graph.l-ruleMesh}
\end{figure}
\begin{remark}  
  The meaning of the $\lambda$-rule requirement, Rel.(\ref{l-ruleEq1}), is that the new nodes $x_j^{n+1}$ avoid the places of the old extremes $x_i^n$ (or $x_{i+1}^n$) 
  by at least $1-\lambda$ of the respective interval $(x_i^n,x_{i+1}^n)$.
\end{remark}
The $\lambda$-rule Requirement is placed on the mesh reconstruction but also affects the values of piecewise linear functions, we call this $\lambda$-rule effect. The following remark discusses their relation,
\begin{remark}[$\lambda$-rule effect for piecewise linears and interpolation]\label{Rem.l-rule}
  Assume that $u(x)$ is a piecewise linear function that oscillates as depicted in Fig.(\ref{l-ruleGeneral}). Assume moreover that the new nodes respect the 
  $\lambda$-rule Req.(\ref{l-ruleReq}) at the extremes in the respective subintervals. Let also $y=\upsilon$ be a horizontal line that separates the extremes. 
  \begin{figure}
    \input{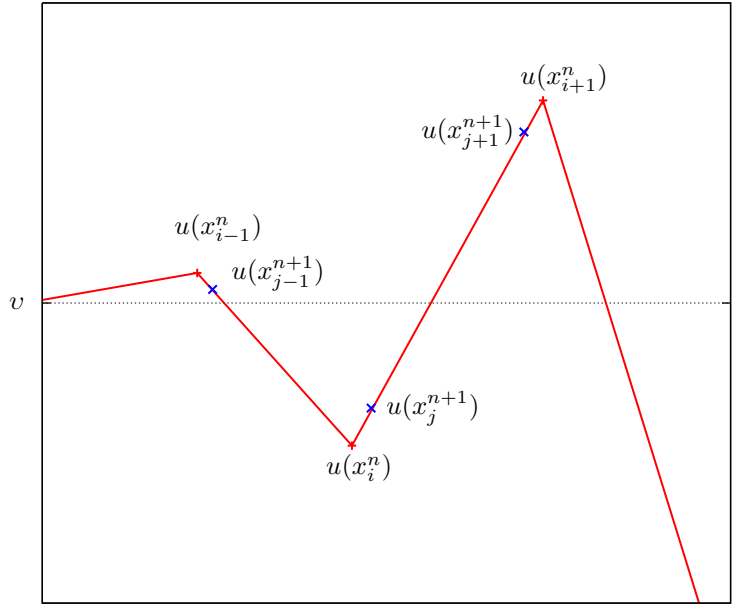}
     \caption{This figure depicts the application of the $\lambda$-rule in the case of a piecewise linear function. The places of the new nodes are depicted along 
              with the old extremes. }\label{l-ruleGeneral}
  \end{figure}
  According to Fig.(\ref{l-ruleGeneral}), $x_{j-1}^{n+1}\in[x_{i-1}^n,x_i^n]$ and by the $\lambda$-rule requirement 
  $$x_{j-1}^{n+1}-x_{i-1}^n\geq (1-\lambda)(x_i^n-x_{i-1}^n)$$
  we get
  $$\frac{x_i^n-x_{j-1}^{n+1}}{x_i^n-x_{i-1}^n}\leq\lambda$$
  Since $u$ is linear in the interval $[x_{i-1}^n,x_i^n]$
  $$\frac{u(x_i^n)-u(x_{j-1}^{n+1})}{u(x_i^n)-u(x_{i-1}^n)}=\frac{x_i^n-x_{j-1}^{n+1}}{x_i^n-x_{i-1}^n}\leq\lambda$$
  by the monotonicity of $u$ in the interval $[x_{i-1}^n,x_i^n]$ the previous relation recasts into
  $$u(x_{j-1}^{n+1})-u(x_i^n)\leq \lambda(u(x_{i-1}^n)-u(x_i^n))$$
  since $0<\lambda<1$ and $u(x_i^n)<\upsilon$ the previous relation reads 
  $$u(x_{j-1}^{n+1})\leq \lambda u(x_{i-1}^n)+(1-\lambda)u(x_i^n)\leq \lambda u(x_{i-1}^n)+(1-\lambda)\upsilon\Rightarrow 
    u(x_{j-1}^{n+1})-\upsilon\leq \lambda(u(x_{i-1}^n)-\upsilon)$$

  For the rest of the extremes of Fig.(\ref{l-ruleGeneral}), that is  $u(x_i^n)$ and $u(x_{i+1}^n)$ and for the respective new nodes $x_j^{n+1}$, $x_{j+1}^{n+1}$ we 
  can similarly prove that,
  $$|u(x_j^{n+1})-\upsilon|\leq\lambda |u(x_i^n)-\upsilon|\quad\mbox{ and }\quad|u(x_{j+1}^{n+1})-\upsilon|\leq \lambda|u(x_{i+1}^n)-\upsilon|$$
  The meaning of this remark is the following: if a new node respects the $\lambda$-rule in a specific interval, the same is true for the value of this new node 
  with respect to the values of the function at the end points of this interval.
\end{remark}

The case where several nodes are placed between consequent extremes will be addressed in Section \ref{Section.Considerations}. There 

The last remark connects the $\lambda$-rule -which is applied on the mesh- with the values of the piecewise linear function. In the rest of this work we shall refer to this connection as \emph{the $\lambda$-rule effect}.

To pass to the third requirement, we first note that the overall phenomenon consists of two major steps, the mesh reconstruction (Step 1.) and the time evolution (Step 3.). We need to study these steps both separately and together. For the separate analysis the requirements Req.(\ref{EvolutionReq}) and Req.(\ref{l-ruleReq}) are sufficient, but for the joined analysis one more requirement is needed. The necessity for the extra requirement is explained in the following remark.
\begin{remark}
  The effect of time evolution (Step 3.) which is due to the numerical scheme, can be studied with several means, such as the respective Modified Equation of the 
  scheme. In the contrary, the effect of the mesh reconstruction (Step 1.) cannot be analysed with classical methods, since it takes place between two consequent time 
  steps. In other words, the mesh reconstruction is not related to the time evolution of the numerical solution.
\end{remark}

The major contribution of this work is the merging of the effects that these steps have on the appearance and the evolution of oscillations; hence on the TV of the numerical approximation. The merging of these effects is quantified in the \emph{Coupling requirement}:
\begin{requirement}[Coupling requirement]\label{CouplingReq}
  The constants $C$ of the Evolution Req.(\ref{EvolutionReq}) and $\lambda$ of the $\lambda$-rule Req.(\ref{l-ruleReq}) are connected via the following relation:
  \begin{equation}
    \lambda+3\lambda C<1
  \end{equation}
\end{requirement}

\section{Time step analysis}\label{Section.Analysis}
  In this section we discuss the appearance and evolution of local extremes. We devise recursive, with respect to the time step $n$, relations regarding the evolution 
  of the extremes based on the requirements presented in the previous section. We present and prove the theoretical results of this work; these include bounds on the 
  extremes, bounds on the TV increase due to oscillations, we moreover prove that in some cases the TV increase, decreases with time steps $n$.
\subsection{Recursive relations}
  The discussion regarding the creation and evolution of the extremes is performed in a time step by time step manner. In every time step we shall discuss their 
  temporal evolution and their spatial modification.

  We start with a jump initial condition, Fig.(\ref{Graph.Initial.Condition}), which we discretize over a non-uniform mesh. In the description 
  that follows we have split every step into two sub-steps. The first sub-step is the time evolution, which is due to the numerical scheme and is governed by the 
  evolution Req.(\ref{EvolutionReq}) and the second sub-step is the spatial modification, which is due to the mesh relocation and the solution update procedure and is 
  governed by the $\lambda$-rule Req.(\ref{l-ruleReq}) and the coupling Req.(\ref{CouplingReq}).

  \begin{figure}[t]
    \begin{center} \input{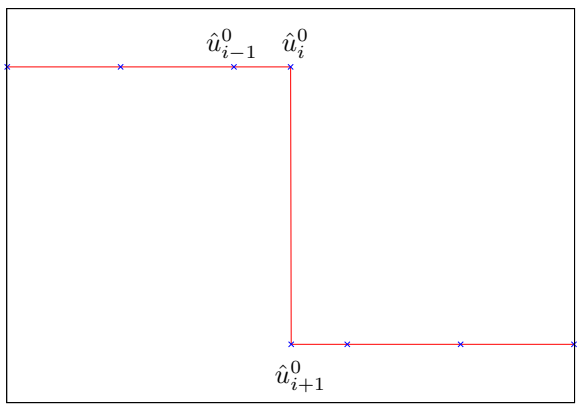} \end{center}
    \caption{This is the initial condition. In this configuration we set $\hat a_1=|\hat u_i^0-\hat u_{i+1}^0|$}\label{Graph.Initial.Condition}
  \end{figure}
  \begin{itemize}
    \item[\underline{1-st step}]
      We refer to Fig.(\ref{Graph.Initial.Condition}) for a graphical description of the configuration. The first nodal point located at the top of the shock, that is 
      $\hat u_i^0$, will evolve according to the Evolution Req.(\ref{EvolutionReq}), which reads
      $$|u_i^1-\hat u_i^0|\leq C\max\left\{|\hat u_i^0-\hat u_{i-1}^0|,|\hat u_i^0-\hat u_{i+1}^0|\right\}$$

      Since we consider jump initial conditions, it is obvious that
      $$|\hat u_i^0-\hat u_{i-1}^0|=0\quad \mbox{ and }\quad |\hat u_i^0-\hat u_{i+1}^0|\leq TV(u^0)$$
      We denote $\hat a_1=|\hat u_i^0-\hat u_{i+1}^0|$ and define $a_1=C \hat a_1$, so the evolution Req.(\ref{EvolutionReq}) for the value $\hat u_i^0$ reads,
      $$|u_i^1-\hat u_i^0|\leq C\max\{|\hat u_i^0-\hat u_{i-1}^0|,|\hat u_i^0-\hat u_{i+1}^0|\}\leq C|\hat u_i^0-\hat u_{i+1}^0|=C\hat a_1=a_1$$
      To introduce the notation for the continuation of this work we define $E_1^{1/2}$ to be the maximum magnitude of this extreme, hence 
      $$E_1^{1/2}=|u_i^1-\hat u_i^0|=a_1$$
      To explain the symbolism, we use the letter $E$ in $E_1^{1/2}$ because we refer to the magnitude of extremes, the subscript $_1$ states that we refer to 1-st 
      extreme and the superscript $^{1/2}$ states that we have moved from the time step $k=0$, with the use of the numerical scheme (time evolution step) but the mesh 
      reconstruction step has not taken place yet. 

      We refer to Fig.(\ref{Graph.l-rule1Ext}) for a graphical description. We then perform the mesh reconstruction step and because of the $\lambda$-rule 
      Req.(\ref{l-ruleReq}) and of the $\lambda$-rule effect Rem.(\ref{Rem.l-rule}) the new extreme will be of magnitude bounded by 
      $$E_1^1=\lambda a_1$$
      where full superscript $^1$ is used since the relocation has taken place. 
      \begin{figure}[t]\begin{center}
        \input{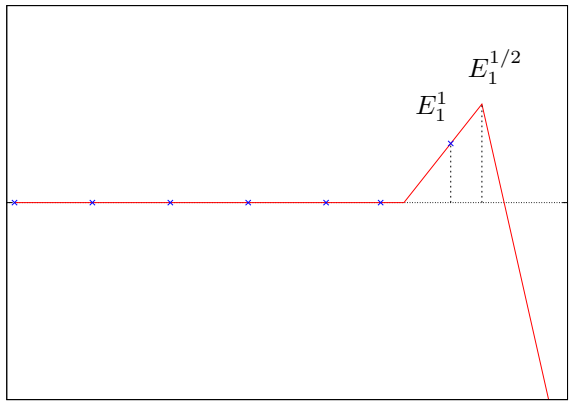}
        \caption{The resulting situation at the head of the shock at the end of the 1-st time step. Only one extreme exists in this time step and it is of 
          magnitude $E_1^1$. The numerical solution -before the remeshing procedure takes place- is depicted red, the new nodes -that occur after the remeshing 
          procedure- are depicted in blue. The new node -that is closest to the old extreme- avoids the extreme by the $\lambda$-rule resulting in a magnitude bounded 
          by $E_1^1\leq \lambda a_1$. The reconstruction of the numerical solution over the new mesh results in the clipping of the magnitude of the extreme according 
          to the $\lambda$-rule.}\label{Graph.l-rule1Ext}
      \end{center}\end{figure}
    \begin{remark}
      Regarding the continuation; the 1-st extreme shall "pollute" its neighbour by provoking the appearance of a 2-nd extreme of the opposite direction.
    \end{remark}
\end{itemize}
\begin{remark}($E_m^k$ Notation)
  We denote by $E_m^{k+1/2}$ (half superscript) the bound on the magnitude of the $m$-th extreme at the $k$-th time step after time evolution and before the mesh 
  reconstruction procedure. We denote by $E_m^{k+1}$ (full superscript) the \underline{bound} on the magnitude of $m$-th extreme at the end of the $k$-th time step, 
  that is after the time evolution and the mesh reconstruction procedure.
\end{remark}

\begin{itemize}
  \item[\underline{2-nd step}]
    We refer Figure \ref{Graph.l-rule1Ext} for the situation at the end of the 1-st step. At the end of the previous step, we had one extreme of magnitude 
    bounded by $E_1^1=\lambda a_1$. Due to the time evolution we expect the 1-st extreme to evolve to a new value. We also expect the creation 
    of a 2-nd extreme at the left side of the 1-st extreme. We will study each extreme separately.
    \begin{itemize}
      \item[1-st Extr.]
        The Evolution Req.(\ref{EvolutionReq}) dictates that this extreme shall evolve as:
        $$|u_i^{1+1/2}-\hat u_i^1|\leq C\max\left\{|\hat u_i^1-\hat u_{i-1}^1|,|\hat u_i^1-\hat u_{i+1}^1|\right\}$$
        where we use half superscript in $u$ since the relocation procedure has not taken place yet. From the previous time step we have that, 
        $$|\hat u_i^1-\hat u_{i-1}^1|\leq E_1^1\quad\mbox{ and }\quad |\hat u_i^1-\hat u_{i+1}^1|\leq 2E_1^1+\hat a_2$$
        To justify the second inequality we return at the end of the time step $k=1$ and notice that the node $i+1$ is placed along the shock, which is -by symmetry- 
        of variation at most $E_1^1+TV(u^0)+E_1^1$. So the Evolution Req.(\ref{EvolutionReq}) for the 1-st Extreme reads,
        $$|u_i^{1+1/2}-\hat u_i^1|\leq C(2E_1^1+\hat a_2)=2C E_1^1+a_2$$
        where we have defined $a_2=C\hat a_2$. If now we set $\upsilon$ to be the level from which we measure the magnitudes of the extremes (in this case the top of 
        the shock), the previous bound recasts,
        $$|(u_i^{1+1/2}-\upsilon)-(u_i^1-\upsilon)|\leq 2C E_1^1+a_2$$
        By setting $E_1^{1+1/2}=u_i^{1+1/2}-\upsilon$ and since $E_1^1=u_i^1-\upsilon$ we deduce that the magnitude of the 1-st extreme will be bounded as
        $$E_1^{1+1/2}\leq E_1^1+2C E_1^1+a_2$$
        Now the relocation procedure takes place and because of the $\lambda$-rule Req.(\ref{l-ruleReq}) and of the $\lambda$-rule effect 
        Rem.(\ref{Rem.l-rule}), the magnitude of the 1-st extreme at the end of this step shall be bounded as follows,
        $$E_1^2=\lambda (E_1^1+2CE_1^1+a_2)$$
      \item[2-nd Extr.]
        The Evolution Req.(\ref{EvolutionReq}) dictates that this extreme shall evolve as,
        $$|u_{i-1}^{1+1/2}-\hat u_{i-1}^1|\leq C\max\left\{|\hat u_{i-1}^1-\hat u_{i-2}^1|,|\hat u_{i-1}^1-\hat u_{i}^1|\right\}$$
        where again half superscript is used on $u_{i-1}^{1+1/2}$ since the relocation procedure has not taken place yet. From the previous time step we know that 
        $$|\hat u_{i-1}^1-\hat u_{i-2}^1|=0\quad\mbox{ and }\quad |\hat u_{i-1}^1-\hat u_{i}^1|\leq E_1^1$$
        So the Evolution Req.(\ref{EvolutionReq}) recasts, for the 2-nd extreme as follows, 
        $$|u_{i-1}^{1+1/2}-\hat u_{i-1}^1|\leq CE_1^1$$
        or by noting that $u_{i-1}^1=\upsilon$ is the level from which we measure the magnitudes of the extremes, the previous 
        bound recasts
        $$E_2^{1+1/2}\leq CE_1^1$$
        where, as we explained earlier half superscript is used because the relocation procedure has not taken place yet.

        Now the relocation procedure takes place and the $\lambda$-rule Req.(\ref{l-ruleReq}) dictates that the magnitude of the 2-nd extreme at the end of this step 
        shall be bounded as follows,     
        $$E_2^2=\lambda CE_1^1$$
    \end{itemize}
  So at the end of the 2-nd step the bounds on the existing extremes are as follows,
  $$E_1^2=\lambda (E_1^1+2CE_1^1+a_2),\quad E_2^2=\lambda C E_1^1$$
  The situation at the end of this step is depicted in Figure \ref{Graph.l-rule2Ext}.
    \begin{figure}[t]\begin{center}
      \input{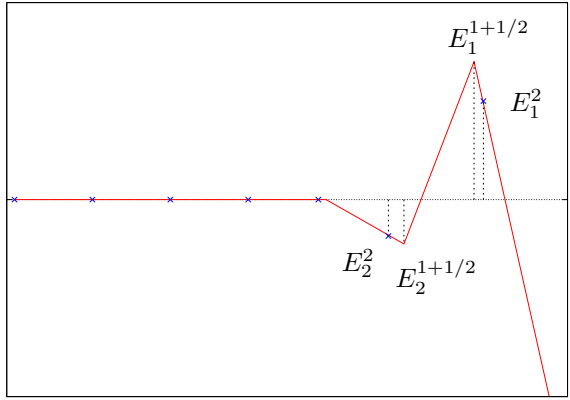}
      \caption{The resulting situation at the end of the 2-nd time step. Two extremes of magnitudes $E_1^2$ and $E_2^2$ exist in this time step. The numerical 
               solution before the remeshing procedure is depicted in red and the new nodes -after the remeshing procedure are depicted in blue. The new nodes -that 
               are closest to the previous extremes- avoid the extremes by the $\lambda$-rule}\label{Graph.l-rule2Ext}
    \end{center}\end{figure}
    \begin{remark}
      The 2-nd extreme shall provoke the appearance of a new extreme of the opposite direction. This is the pollution process.
    \end{remark}

  \item[\underline{3-rd step}]
    At the end of the previous step we had two extremes with magnitudes bounded by $E_1^2$ and $E_2^2$, see Figure \ref{Graph.l-rule2Ext}. In this step we expect them 
    to evolve to new values $E_1^3$ and $E_2^3$, we also expect a new extreme to appear, namely $E_3^3$.
    \begin{itemize}
      \item[1-st Extr.]
        Following the discussion of the previous steps, we note that the evolution of the 1-st extreme will be governed by the Evolution Req.(\ref{EvolutionReq}), so
        $$|u_i^{2+1/2}-\hat u_i^2|\leq C\max\left\{|\hat u_i^2-\hat u_{i-1}^2|,|\hat u_i^2-\hat u_{i+1}^2|\right\}$$
        where from the previous time steps we note
        $$|\hat u_i^2-\hat u_{i-1}^2|\leq E_1^2+E_2^2\quad \mbox{ and }\quad |\hat u_i^2-\hat u_{i+1}^2|\leq 2E_1^2+\hat a_3$$
        We also note that from the previous time step the bound $E_1^2=\lambda (E_1^1+2CE_1^1+a_2)$ is obviously larger than the bound  
        $E_2^2=\lambda C E_1^1$ 
        hence the Evolution Req.(\ref{EvolutionReq}) for the 1-st extreme reads as follows (after the subtraction of $\upsilon$), 
        $$E_1^{2+1/2}\leq E_1^2+2CE_1^2+a_3$$

        Now the mesh reconstruction procedure takes place and the new magnitude of the 1-st extreme shall be bounded by 
        $$E_1^3=\lambda(E_1^2+2CE_1^2+a_3)$$
      \item[2-nd Extr.]
        Using similar arguments as before, the Evolution Req.(\ref{EvolutionReq}) dictates the evolution of the 2-nd extreme as follows, 
        $$|u_{i-1}^{2+1/2}-\hat u_{i-1}^2|\leq \max\left\{|\hat u_{i-1}^2-\hat u_{i-2}^2|,|\hat u_{i-1}^2-\hat u_i^2|\right\}$$
        From the previous time step we note 
        $$|\hat u_{i-1}^2-\hat u_{i-2}^2|\leq E_2^2\quad\mbox{ and }\quad|\hat u_{i-1}^2-\hat u_i^2|\leq E_2^2+E_1^2$$
        hence the Evolution Req.(\ref{EvolutionReq}) for the 2-nd extreme yields,
        $$E_2^{2+1/2}\leq E_2^2+C(E_2^2+E_1^2)$$

        Now, relocation takes place and the new 2-nd extreme shall be of magnitude bounded by 
        $$E_2^3= \lambda(E_2^2+C(E_2^2+E_1^2))$$
      \item[3-rd Extr.]
        Repeating the work we did for the 2-nd extreme in the previous time step, the magnitude of the 3-rd extreme after both the time evolution and the node 
        relocation procedure will be bounded 
        $$E_3^3=\lambda CE_2^2$$ 
    \end{itemize}
   So at the end of the 3-rd step the bounds on the magnitudes of the existing extremes are as follows, 
    \begin{align}
      E_1^3&=\lambda(E_1^2+2CE_1^2+a_3)= \lambda^3 (1+2C)^2 a_1+\lambda^2 (1+2C) a_2+\lambda a_3,\nonumber\\
      E_2^3&=\lambda(E_2^2+C(E_2^2+E_1^2))= \lambda^3 2C(1+2C) a_1+\lambda^2 C a_2,\nonumber\\
      E_3^3&=\lambda CE_2^2=\lambda ^3 C^2 a_1\nonumber
    \end{align}
    Figure \ref{l-rule3Ext graph} depicts the situation at the end of the 3-rd step.
  \begin{figure}[t] \begin{center}
     \input{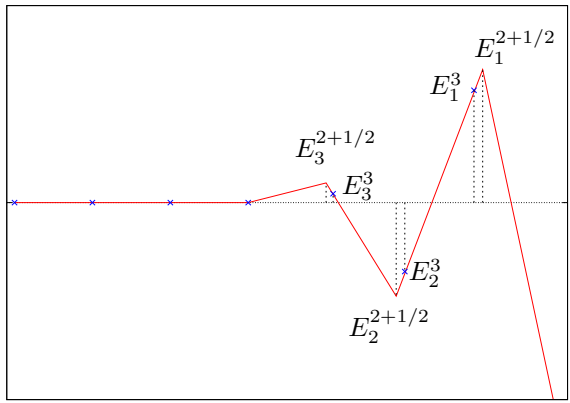} 
     \caption{The resulting situation at the end of the 3-rd time step. Three extremes of magnitude $E_1^3$, $E_2^3$, $E_3^3$ exist in this time step.}
     \label{l-rule3Ext graph}
  \end{center}\end{figure}
\end{itemize}


For the sake of completeness we define the increases $a_i$ that we used throughout the previous paragraph. For this we first analyse the variation of the 
shock at the $k$-th time step. It consists of three parts, the oscillatory part at the top of the shock with magnitude at most $E_1^k$, the main part of the shock which is of variation $TV(u^0)$ and the oscillatory part at the foot of the shock  being of magnitude at most $E_1^k$.

\begin{definition}[Definition of the $a_i$ increases]\label{a_i}
  Let $\hat u_i^k$ be the value at the top of the shock. The node $x_{i+1}^k$ is located along the shock, in one of the three parts that consist the shock.

  We define 
  $$\hat a_k=\left(|\hat u_i^k-\hat u_{i+1}^k|-2E_1^k\right)_+$$
  where the subscript $_+$ denotes the positive part. Moreover we define $a_k=C\hat a_k$.
\end{definition}
\begin{remark}
  By definition, $\hat a_k$ describes the possibly more that $2E_1^k$ distance $|\hat u_i^k-\hat u_{i+1}^k|$. That is, if $|\hat u_i^k-\hat u_{i+1}^k|<2E_1^k$ then 
  $\hat a_k=0$; hence $a_k=0$.
\end{remark}

We can generalise the situation in the $k$-th time step as follows,
\begin{itemize}
  \item[\underline{$k$-th step}]
    To generalise the description that we presented in the previous time steps, we introduce here the recursive relations for the general extreme $m$ at the general 
    time step $k\geq 1$, 
    \begin{equation}\label{ExtremeRecursion}
      \begin{cases}
        E_{m}^k= \lambda\left(E_m^{k-1}+C\cdot( E_m^{k-1}+E_{m-1}^{k-1})\right),&\mbox{for } m>1\cr
        E_1^{k}= \lambda \left(E_1^{k-1}+2C\cdot E_1^{k-1}+a_k\right),&\mbox{for } m=1\cr
      \end{cases}
    \end{equation}
  
  \begin{remark}
  From the second equation if Rel.(\ref{ExtremeRecursion}), it is evident that we add at least $2CE_1^{k-1}$ in the increase of the 1-st extreme even if the actual 
  increase needed for the highest node is less. This increases the magnitude of the 1-st extreme but at the same time simplifies the presentation and the route of the 
  proof. More precise increases, result in sharper final bounds. 
\end{remark}
In analysing these recursive relations, we see that for the evolution of the extreme $m$ -from the value $E_m^{k-1}$ to the value $E_m^k$- we take into account the neighbouring extreme in the right hand side, $E_{m-1}^{k-1}$. To justify such a choice, we have to prove that the bounds $E_m^k$ on the magnitudes of the extremes constitute a decreasing sequence with respect to $m=1,\ldots$ for every step $k$. That is we need to prove that $E_1^k>E_2^k>E_3^k>\cdots$. This is accomplished by  the following lemma, 
 
\begin{lemma}
  For every step $k$ the magnitudes of the bounds given by Rel.(\ref{ExtremeRecursion}) are in a decreasing order w.r.t $m=1,\ldots$
\end{lemma}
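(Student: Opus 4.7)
The plan is to prove the lemma by induction on the time step $k$, exploiting the fact that the recursion
$$E_m^k = \lambda\bigl((1+C)E_m^{k-1}+C\,E_{m-1}^{k-1}\bigr)$$
for $m>1$ and
$$E_1^k = \lambda\bigl((1+2C)E_1^{k-1}+a_k\bigr)$$
for $m=1$ are both linear with nonnegative coefficients. First I would fix a convenient convention: since the $m$-th extreme is born only at step $m$, I set $E_m^{k-1}:=0$ whenever $m=k$, so that the recursion formula for $m>1$ reproduces the birth relation $E_k^k=\lambda C\,E_{k-1}^{k-1}$ used in the third step above. This way the two recursions uniformly cover every pair of indices that must be compared.

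For the base case, I would check $k=1$ (trivial, only $E_1^1$ exists) and $k=2$ directly: $E_1^2-E_2^2 = \lambda\bigl((1+2C)E_1^1+a_2-CE_1^1\bigr)=\lambda\bigl((1+C)E_1^1+a_2\bigr)>0$, using $E_1^1=\lambda a_1>0$ and $a_2\geq 0$. For the inductive step, assume $E_1^{k-1}>E_2^{k-1}>\cdots>E_{k-1}^{k-1}$ and form the differences at step $k$. For general $m\geq 2$,
$$E_m^k-E_{m+1}^k=\lambda\bigl[(1+C)(E_m^{k-1}-E_{m+1}^{k-1})+C(E_{m-1}^{k-1}-E_m^{k-1})\bigr],$$
which is a positive combination of two strictly positive terms by the induction hypothesis; for $m=1$,
$$E_1^k-E_2^k=\lambda\bigl[(1+C)(E_1^{k-1}-E_2^{k-1})+a_k\bigr]>0,$$
again by induction and $a_k\geq 0$. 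The boundary case $m=k-1$ is handled by the convention $E_k^{k-1}=0$: there the first difference in the bracket becomes $E_{k-1}^{k-1}-0>0$, so positivity is preserved.

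I expect no real obstacle here; the only subtle point is the bookkeeping for the newest extreme $E_k^k$, which at first glance is not covered by the induction hypothesis (the term $E_{k+1}^{k-1}$ has no meaning). The convention of declaring yet-unborn extremes to equal zero cleanly resolves this and lets a single inductive argument carry the conclusion for all admissible $m$.
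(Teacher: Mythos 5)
Your proof is correct and follows essentially the same route as the paper: induction on the time step $k$, using the linearity and nonnegative coefficients of the recursion (\ref{ExtremeRecursion}) to propagate the ordering, with the $m=1$ case handled separately via $a_k\geq 0$. The one difference is that you explicitly treat the newly born extreme by the convention that unborn extremes equal zero, a boundary case the paper's proof passes over silently; this is a small but genuine improvement in rigor rather than a different method.
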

\begin{proof}
  Let's assume that in the step $k$, $E_{m+1}^k\leq E_m^k$ for every $m=1,\cdots$. We shall prove that $E_{m+1}^{k+1}\leq E_m^{k+1}$ for every $m=1,\ldots$
  \begin{itemize}
    \item[\underline{$m=1$}]
      The recursive relations Rel.(\ref{ExtremeRecursion}) read for $E_1^{k+1}$ and $E_2^{k+1}$ as follows,
      \begin{align}
        E_2^{k+1}&= \lambda(E_2^k+CE_2^k+CE_1^k),\nonumber \\
        E_1^{k+1}&= \lambda(E_1^k+CE_1^k+CE_1^k+a_{k+1})\nonumber
      \end{align}
      Utilising the induction hypotheses and that $a_{k+1}\geq 0$ the result $E_2^{k+1}<E_1^{k+1}$ is immediate.
    \item[\underline{$m>1$}]
      The recursive relations Rel.(\ref{ExtremeRecursion}) read for $E_m^{k+1}$ and $E_{m+1}^{k+1}$ as follows,
      \begin{align}
        E_{m+1}^{k+1}&= \lambda(E_{m+1}^k+CE_{m+1}^{k}+CE_{m}^k)\nonumber \\
        E_{m}^{k+1}  &= \lambda(E_{m}^k+CE_{m}^{k}+CE_{m-1}^k)\nonumber
      \end{align}
      The induction hypotheses states that $E_{m+1}^k\leq E_m^k\leq E_{m-1}^k$, so immediately we conclude that $E_{m+1}^{k+1}\leq E_m^{k+1}$.
  \end{itemize}
  It is so proven that in order to bound the new magnitude of every extreme we could use the recursive relations Rel.(\ref{ExtremeRecursion}).
\end{proof}
\end{itemize}

Having devised recursive relations for the evolution of the extremes, that is Rel.(\ref{ExtremeRecursion}) we continue with the study of the bounds of their magnitudes. 
\subsection{Extremes}
In this paragraph we solve the recursive relation Rel.(\ref{ExtremeRecursion}), for every extreme $m$ and we provide uniform -with respect to the time step $k$- bounds on the magnitude of the extremes.

We start by providing bounds on the extremes of $E_m^k$ with respect to the sequence of increases $a_i$. 
\begin{lemma}[Magnitude of the 1-st extreme]
  The magnitude of the first extreme in the $k$-th time step is bounded by,
  $$E_1^k\leq\lambda \sum_{j=1}^{k}\lambda^{k-j}(1+2C)^{k-j} a_j,$$
  or, by setting $l=k-j$
  \begin{equation}\label{Extreme1}
    E_1^k\leq\lambda \sum_{l=k-1}^{0}\lambda^{l}(1+2C)^{l} a_{k-l}
  \end{equation}
\end{lemma}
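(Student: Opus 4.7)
The plan is to view the recursion for $E_1^k$ in Rel.(\ref{ExtremeRecursion}) as a simple first-order linear recurrence and to unwind it either by direct iteration or by finite induction on $k$. Indeed, the second branch of Rel.(\ref{ExtremeRecursion}) can be rewritten as
\begin{equation*}
    E_1^k = \lambda(1+2C)\, E_1^{k-1} + \lambda a_k,
\end{equation*}
which is inhomogeneous linear with constant coefficient $\lambda(1+2C)$ and forcing term $\lambda a_k$. The base case from the analysis of the 1-st step gives $E_1^1 = \lambda a_1$, which already agrees with the claimed formula (the sum there collapses to the single term $j=k=1$).

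For the inductive step, I would assume the claim holds at level $k-1$, that is
\begin{equation*}
    E_1^{k-1} \leq \lambda \sum_{j=1}^{k-1} \lambda^{k-1-j}(1+2C)^{k-1-j} a_j,
\end{equation*}
substitute into the recursion above, distribute the factor $\lambda(1+2C)$ through the sum, and finally append the new $\lambda a_k$ term as the $j=k$ summand (for which the exponents $\lambda^{k-j}(1+2C)^{k-j}$ equal $1$). Shifting the exponent from $k-1-j$ to $k-j$ under the factor $\lambda(1+2C)$ is the only bookkeeping needed, and it produces exactly
\begin{equation*}
    E_1^k \leq \lambda \sum_{j=1}^{k} \lambda^{k-j}(1+2C)^{k-j} a_j.
\end{equation*}
The reindexing $l = k-j$ in the statement is cosmetic.

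There is no real obstacle here: since $\lambda, C, a_j \geq 0$ the inequalities propagate with the right sign and no sign-tracking is required, and since the recursion is linear with a single previous term, no cross-terms arise. As a sanity check one may verify the formula against the explicit expressions for $E_1^2$ and $E_1^3$ computed in the 2-nd and 3-rd step paragraphs; both agree term-by-term. The only care needed is to treat the recursion as an upper bound rather than equality (consistent with the definition of $E_m^k$ as a bound on the magnitude), so that the induction yields the stated inequality rather than identity.
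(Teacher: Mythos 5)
Your proposal is correct and follows essentially the same route as the paper: induction on $k$ with base case $E_1^1\leq\lambda a_1$, substitution of the induction hypothesis into the recursion $E_1^{k+1}\leq\lambda\left((1+2C)E_1^k+a_{k+1}\right)$, distribution of the factor $\lambda(1+2C)$ through the sum, and absorption of $\lambda a_{k+1}$ as the $j=k+1$ summand. Your remark about treating the recursion as an upper bound rather than an equality matches how the paper actually uses Rel.(\ref{ExtremeRecursion}) in its proof.
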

  \begin{proof}
    By induction. We note from the previous discussion that 
    $$E_1^1\leq \lambda a_1=\lambda \sum_{j=1}^{1}\lambda^{1-j}(1+2C)^{1-j} a_j$$ 
    For the induction hypothesis we assume that the magnitude of the 1-st extreme is bounded in the $k$-th time step as
    $$E_1^k\leq\lambda \sum^{k-1}_{l=0}\lambda^{l}(1+2C)^{l} a_{k-l}$$ 
    Using the evolution relation (\ref{ExtremeRecursion}) of the 1-st extreme, that is
    $$E_1^{k+1}\leq\lambda\left(E_1^k+2CE_1^k+a_{k+1}\right)$$
    we can bound its magnitude in the $k+1$ time step, 
    \begin{align}
      E_1^{k+1}&\leq \lambda\left((1+2C)E_1^k+a_{k+1}\right)\nonumber
    \end{align}
    The right hand side recast -by the induction hypothesis- as follows,
    \begin{align}
      E_1^{k+1}&\leq \lambda\left((1+2C)\lambda \sum_{j=1}^k \lambda^{k-j}(1+2C)^{k-j}a_{j}+a_{k+1}\right)\nonumber\\
               &\leq \lambda\left(\sum_{j=1}^k \lambda^{k+1-j}(1+2C)^{k+1-j}a_{j}+\lambda^{k+1-(k+1)}(1+2C)^{k+1-(k+1)}a_{k+1}\right)\nonumber\\
               &\leq \lambda\sum_{j=1}^{k+1} \lambda^{k+1-j}(1+2C)^{k+1-j}a_{j}\nonumber
    \end{align}
    This completes the proof regarding the bound of the magnitude of the 1-st extreme.
  \end{proof}

  We now need a similar bound on the magnitude of the 2-nd extreme,
  \begin{lemma}[Magnitude of the 2-nd extreme]
    The magnitude of the second extreme in the $k$-th time step is bounded by,
    $$E_2^k\leq \lambda^2 C\sum_{j=1}^{k-1}\binom{k-j}{k-j-1}\lambda^{k-j-1}(1+2C)^{k-j-1}a_j,$$
    or by setting $l=k-j$
    \begin{equation}\label{Extreme2}
      E_2^k\leq \lambda^2 C\sum^{k-1}_{l=1}\binom{l}{l-1}\lambda^{l-1}(1+2C)^{l-1}a_{k-l}
    \end{equation}
  \end{lemma}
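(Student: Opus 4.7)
The plan is to proceed by induction on the time step $k$, exploiting both the recursion (\ref{ExtremeRecursion}) for $m=2$ and the bound on $E_1^k$ already established in Rel.(\ref{Extreme1}). The essential trick is to slightly weaken the recursion so that the same growth factor $(1+2C)$ that governs $E_1$ also appears in the estimate for $E_2$; this is what makes the two sums combine cleanly.

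First, for the base case at $k=2$, note that $E_2^1=0$ (the second extreme has not yet appeared) and the recursion (\ref{ExtremeRecursion}) gives $E_2^2 = \lambda C E_1^1 = \lambda^2 C a_1$, which is exactly what the claimed formula (\ref{Extreme2}) evaluates to (the single summand $j=1$, $l=1$, with $\binom{1}{0}=1$). For the induction step, I assume the bound (\ref{Extreme2}) at step $k$ and want to derive it at step $k+1$. Starting from
\[
  E_2^{k+1} \;=\; \lambda(1+C)\,E_2^k + \lambda C\,E_1^k \;\leq\; \lambda(1+2C)\,E_2^k + \lambda C\,E_1^k,
\]
where the inequality uses only $C>0$, I substitute the induction hypothesis for $E_2^k$ and the bound of Rel.(\ref{Extreme1}) for $E_1^k$. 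The first term produces
\[
  \lambda^2 C \sum_{j=1}^{k-1} (k-j)\,\lambda^{k-j}(1+2C)^{k-j}\,a_j,
\]
while the second produces
\[
  \lambda^2 C \sum_{j=1}^{k} \lambda^{k-j}(1+2C)^{k-j}\,a_j.
\]

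Adding these two sums, the coefficients of the common indices $j=1,\ldots,k-1$ add to $(k-j)+1 = (k+1)-j$, and the second sum contributes the extra term at $j=k$ with coefficient $1=(k+1)-k$, so the result telescopes into the single sum
\[
  \lambda^2 C \sum_{j=1}^{k}(k+1-j)\,\lambda^{(k+1)-j-1}(1+2C)^{(k+1)-j-1}\,a_j,
\]
which is precisely the formula (\ref{Extreme2}) with $k$ replaced by $k+1$, once one recognises $\binom{k+1-j}{k-j}=k+1-j$. The main obstacle is really just clerical: getting the index shifts right so that the two sums share a common head and the lone tail at $j=k$ slots in with the correct coefficient. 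The one conceptual point worth highlighting is the replacement $\lambda(1+C)\mapsto\lambda(1+2C)$ in the recursion: it gives a slightly looser bound but is essential for the clean binomial-flavoured closed form, and it does not spoil the eventual convergence analysis because the Coupling Req.(\ref{CouplingReq}) already accounts for $\lambda(1+2C)<1$ (in the regime of interest).
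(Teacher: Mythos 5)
Your proof is correct and follows essentially the same route as the paper: induction on $k$ using the recursion (\ref{ExtremeRecursion}) together with the bound (\ref{Extreme1}) on $E_1^k$, weakening $1+C$ to $1+2C$ so the two sums share the growth factor, and then merging coefficients via the Pascal-type identity (which you state as $\binom{k+1-j}{k-j}=k+1-j$ where the paper writes $\binom{l}{l-1}+\binom{l}{l}=\binom{l+1}{l}$). The only difference is notational — you index by $j$ throughout while the paper substitutes $l=k-j$ — so no further comment is needed.
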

  \begin{proof}
    Proof by induction using relations (\ref{ExtremeRecursion}), (\ref{Extreme1}), (\ref{Extreme2}) and the fact $\binom{n}{k}+\binom{n}{k+1}=\binom{n+1}{k+1}$.  We 
    note from the previous discussion that 
    $$E_2^2\leq \lambda^2 C a_1=\lambda^2 C\sum_{j=1}^{2-1}\binom{2-j}{2-j-1}\lambda^{2-j-1}(1+2C)^{2-j-1}a_j$$ 
    For the induction hypothesis we assume that 
    $$E_2^k\leq \lambda^2 C\sum_{j=1}^{k-1}\binom{k-j}{k-j-1}\lambda^{k-j-1}(1+2C)^{k-j-1}a_j$$
    and for the induction step we have the following
    \begin{align}
      E_2^{k+1}
        &= \lambda\left(E_2^k+C(E_2^k+E_1^k)\right)=\lambda\left((1+C)E_2^k+CE_1^k\right)\nonumber \\
        & \leq \lambda\left(\lambda^2C(1+C)\sum_{l=1}^{k-1}\binom{l}{l-1}\lambda^{l-1}(1+2C)^{l-1}a_{k-l}
          +\lambda C\sum^{k-1}_{l=0}\lambda^l(1+2C)^la_{k-l}\right)\nonumber
    \end{align}
    Where in the last step we utilised the induction hypothesis. Now, since $1+C\leq 1+2C$ the bound recasts
    \begin{align}
      E_2^{k+1}
        &\leq \lambda\left(\lambda C\sum^{k-1}_{l=1}\binom{l}{l-1}\lambda^l(1+2C)^la_{k-l}+\lambda C\sum_{l=0}^{k-1}\lambda^l(1+2C)^la_{k-l}\right)\nonumber\\
        &=    \lambda^2C\left(\sum^{k-1}_{l=1}\left(\binom{l}{l-1}+1\right)\lambda^l(1+2C)^la_{k-l}+a_k\right)\nonumber\\
        &=    \lambda^2C\left(\sum^{k-1}_{l=1}\left(\binom{l}{l-1}+\binom{l}{l}\right)\lambda^l(1+2C)^la_{k-l}+a_k\right)\nonumber\\
        &=    \lambda^2C\left(\sum^{k-1}_{l=1}\binom{l+1}{l}\lambda^l(1+2C)^la_{k-l}+a_k\right)\nonumber\\
        &=    \lambda^2C\left(\sum^{k-1}_{l=1}\binom{l+1}{l}\lambda^l(1+2C)^la_{k-l}+\binom{0+1}{0}\lambda^0(1+2C)^0a_{k-0}\right)\nonumber\\
        &=    \lambda^2C\sum^{k-1}_{l=0}\binom{l+1}{l}\lambda^l(1+2C)^la_{k-l}\nonumber
    \end{align}
    Finally we set $\mu=l+1$ and the bound on the magnitude of the 2-nd extreme reads,
    \begin{align}
      E_2^{k+1}
        &\leq    \lambda^2C\sum^{(k+1)-1}_{\mu=1}\binom{\mu}{\mu-1}\lambda^{\mu-1}(1+2C)^{\mu-1}a_{k+1-\mu}
    \end{align}
    and this completes the proof regarding the magnitude of the 2-nd extreme.
  \end{proof}

  Similarly we prove that the magnitude of the 3-rd extreme $k$-th time step is bounded by,
  $$E_3^k\leq \lambda^3 C^2\sum_{j=1}^{k-1}\binom{k-j}{k-j-2}\lambda^{k-j-2}(1+2C)^{k-j-2}a_j,$$
  or by setting $l=k-j$,
  $$E_3^k\leq \lambda^3 C^2\sum^{k-1}_{l=2}\binom{l}{l-2}\lambda^{l-2}(1+2C)^{l-2}a_{k-l}$$

  We can generalise the previous lemmas, in a compact form for the $m$-th extreme in the $k$-th time step. 
  \begin{lemma}[Magnitude of the $m$-th extreme]
    The magnitude of the $m$-th extreme in the $k$-th time step is bounded by,
    $$E_m^k\leq \lambda^m C^{m-1}\sum_{j=1}^{k-1}\binom{k-j}{k-j-m+1}\lambda^{k-j-m+1}(1+2C)^{k-j-m+1}a_{j},$$
    or by setting $l=k-j$,
    \begin{equation}\label{Extremem}
      E_m^k\leq \lambda^m C^{m-1}\sum^{k-1}_{l=m-1}\binom{l}{l-m+1}\lambda^{l-m+1}(1+2C)^{l-m+1}a_{k-l}
    \end{equation}
  \end{lemma}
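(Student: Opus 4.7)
The plan is to argue by a double induction, with an outer induction on the extreme index $m$ and an inner induction on the time step $k$. The cases $m=1$ and $m=2$ have already been established as the two preceding lemmas, and both are needed as base cases since the inductive step will invoke the bound for $E_{m-1}^k$ at the same time step.

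The first move would be to rewrite the recursion \eqref{ExtremeRecursion} in the cleaner form $E_m^{k+1}\leq \lambda(1+C)E_m^k+\lambda C E_{m-1}^k$ and to apply the estimate $1+C\leq 1+2C$, mirroring the trick already used in the proof of the 2-nd extreme. Substituting the inductive hypotheses for $E_m^k$ (inner) and for $E_{m-1}^k$ (outer), and factoring out the common prefactor $\lambda^m C^{m-1}$, would produce two sums: one coming from the $(1+C)E_m^k$ term, carrying the powers $\lambda^{l-m+2}(1+2C)^{l-m+2}$ against the binomial $\binom{l}{l-m+1}$, and a second coming from the $C E_{m-1}^k$ term, carrying the same powers against $\binom{l}{l-m+2}$.

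Next I would reindex both sums by the shift $l'=l+1$. This brings the powers into the target form $\lambda^{l'-m+1}(1+2C)^{l'-m+1}$ in both sums, rewrites the binomials as $\binom{l'-1}{l'-m}$ and $\binom{l'-1}{l'-m+1}$ respectively, and updates the arguments of $a$ to $a_{k+1-l'}$. On the overlap $l'=m,\ldots,k$ Pascal's identity $\binom{l'-1}{l'-m}+\binom{l'-1}{l'-m+1}=\binom{l'}{l'-m+1}$ merges the two binomials into the target one. The single boundary contribution at $l'=m-1$ comes only from the second sum, where $\binom{m-2}{0}=1=\binom{m-1}{0}$, and this furnishes the otherwise missing $l'=m-1$ term of the target formula.

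The main obstacle I anticipate is purely bookkeeping: tracking how the two summation ranges shift under $l'=l+1$, checking that their union after shifting equals the target range $l'=m-1,\ldots,k$, and matching the binomials at the boundary $l'=m-1$. Nothing beyond this is subtle: the combinatorial content is carried entirely by Pascal's identity, and the only analytic ingredient is the inequality $1+C\leq 1+2C$. The entire argument is a direct generalisation of the $m=2$ lemma with $\binom{l+1}{l}$ replaced throughout by $\binom{l+1}{l-m+2}$.
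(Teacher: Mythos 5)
Your proposal is correct and is precisely the argument the paper has in mind: the paper's own ``proof'' of this lemma is a one-line remark that it is identical to the $2$-nd extreme case, and your double induction (outer on $m$, inner on $k$, using $1+C\leq 1+2C$, the shift $l'=l+1$, and Pascal's identity to merge $\binom{l'-1}{l'-m}+\binom{l'-1}{l'-m+1}=\binom{l'}{l'-m+1}$) is exactly that generalisation. The only detail worth adding when writing it out is the inner base case $k=m$, where $E_m^m=\lambda C E_{m-1}^{m-1}\leq \lambda^m C^{m-1}a_1$ matches the single term $l=m-1$ of the target sum.
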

  \begin{proof} The proof is exactly the same as in the 2-nd extreme and is omitted.\end{proof}

  The last lemmas provided bounds on the magnitudes of the extremes. In the following remark we merge these bounds in a single relation valid for every 
  $m=1,2,3,\ldots$.
  \begin{remark}
    The bound we have extracted for the $m$-th extreme at the $k$-th time step, that is Rel.(\ref{Extremem}):
    $$E_m^k\leq \lambda^m C^{m-1}\sum^{k-1}_{l=m-1}\binom{l}{l-m+1}\lambda^{l-m+1}(1+2C)^{l-m+1}a_{k-l}$$
    is valid for every $m=1,2,3,\ldots$, -not just for $m>1$- since, for $m=1$ the bound we extracted for the 1-st extreme at the $k$-th time step, that is 
    Rel.(\ref{Extreme1}),
    $$E_1^k\leq\lambda \sum^{k-1}_{l=0}\lambda^{l}(1+2C)^{l} a_{k-l}$$
    can be written in the form
    $$E_1^k\leq \lambda^1 C^{1-1}\sum^{k-1}_{l=1-1}\binom{l}{l-1+1}\lambda^{l-1+1}(1+2C)^{l-1+1}a_{k-l}$$
  \end{remark}
    So far we have described the creation and evolution of the extremes. We provided Recursive relations (\ref{ExtremeRecursion}) that connect the magnitudes of 
    extremes, we have solved the recursions and merged the magnitudes of the extremes into a single relation (\ref{Extremem}). 

\begin{remark}
  The bound we have extracted for the $m$-th extreme at the $k$-th time step, that is Rel.(\ref{Extremem}):
  $$E_m^k\leq \lambda^m C^{m-1}\sum_{l=k-1}^{m-1}\binom{l}{l-m+1}\lambda^{l-m+1}(1+2C)^{l-m+1}a_{k-l}$$
  is valid for every $m=1,\ldots$ since the bound we extracted for the 1-st extreme at the $k$-th time step, that is Rel.(\ref{Extreme1}):
  $$E_1^k\leq\lambda \sum_{l=k-1}^{0}\lambda^{l}(1+2C)^{l} a_{k-l}$$
  since the former can be written in the form of relation (\ref{Extremem}) for $m=1$.
\end{remark}

    Now, we note that the bounds on the magnitudes of the extremes, that we have extracted, depend on the time step $k$. We shall bound the magnitudes of the 
    extremes uniformly with respect to the time step $k$. This will allow us to provide the final proof regarding the total variation increase due to oscillations. 
   \begin{lemma}[Uniform -with respect to the time step $k$- bound on the extremes]
      If there is a constant $M>0$ such that $a_i\leq C M$ for every $i=0,\cdots\infty$ and if $\lambda+2\lambda C<1$ then every extreme $m$ 
      is uniformly -with respect to the time step $k$- bounded as,
      \begin{equation}\label{ExtremeUniform}
        E_m^k\leq M\left(\frac{\lambda C}{1-\lambda-2\lambda C}\right)^m
     \end{equation}
    \end{lemma}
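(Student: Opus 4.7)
My plan is to start from the closed-form bound already derived, namely
$$E_m^k \leq \lambda^m C^{m-1} \sum_{l=m-1}^{k-1}\binom{l}{l-m+1}\lambda^{l-m+1}(1+2C)^{l-m+1} a_{k-l},$$
and then just push the hypotheses through. First I would use the uniform bound $a_{k-l}\leq CM$ to pull the factor $CM$ out of the sum; the power of $C$ then bumps up from $m-1$ to $m$, which is exactly what the target expression needs. This produces
$$E_m^k \leq \lambda^m C^m M \sum_{l=m-1}^{k-1}\binom{l}{l-m+1}\bigl(\lambda(1+2C)\bigr)^{l-m+1}.$$

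Next I would reindex the sum with $p = l-m+1$, so that the binomial becomes $\binom{p+m-1}{p}$ and the summation range becomes $p=0,\ldots,k-m$. Since all terms are nonnegative, I can extend the sum to $p=\infty$ and only enlarge the bound; the resulting series is the standard negative-binomial generating function
$$\sum_{p=0}^{\infty}\binom{p+m-1}{p} x^p \;=\; \frac{1}{(1-x)^m},\qquad |x|<1.$$
The coupling hypothesis $\lambda+2\lambda C<1$ is precisely the condition that $x:=\lambda(1+2C)$ lies in $(0,1)$, so the series converges and summing gives $(1-\lambda-2\lambda C)^{-m}$. Plugging this in collapses the estimate to the claimed $M\bigl(\lambda C/(1-\lambda-2\lambda C)\bigr)^m$, uniformly in $k$.

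The only subtlety I anticipate is the convergence step: I need the hypothesis to guarantee $\lambda(1+2C)<1$ so that the generating-function identity applies, and I should note that extending the finite sum to an infinite one is legitimate because each summand is nonnegative; both points are immediate but worth mentioning for rigor. Everything else is bookkeeping with binomial coefficients and a single index shift, so there is no real obstacle beyond checking the algebra of exponents ($\lambda^m C^{m-1}\cdot C = \lambda^m C^m$ outside the sum, matched against $(1-\lambda-2\lambda C)^{-m}$ from the sum).
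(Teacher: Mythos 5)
Your proposal is correct and follows essentially the same route as the paper's own proof: bound $a_{k-l}\leq CM$ in Rel.(\ref{Extremem}), reindex so the binomial becomes $\binom{\nu+m-1}{\nu}$, extend the nonnegative sum to infinity, and apply the negative-binomial series with $t=\lambda+2\lambda C<1$. No substantive differences.
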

  \begin{proof}
    The magnitude of the bound of the $m$-th extreme, $m=1,2,3,\ldots$ at the $k$-th time step, with $m\leq k$, is given by the Rel.(\ref{Extremem}),
    $$E_m^k\leq \lambda^m C^{m-1}\sum^{k-1}_{l=m-1}\binom{l}{l-m+1}\lambda^{l-m+1}(1+2C)^{l-m+1}a_{k-l}$$
    Since the increase $a_i$ are uniformly bounded, $a_i\leq CM$ (we refer to the definition of the increases $a_i$ Def.(\ref{a_i})) we can bound the extremes as,
    $$E_m^k\leq \lambda^m C^{m}M\sum^{k-1}_{l=m-1}\binom{l}{l-m+1}\lambda^{l-m+1}(1+2C)^{l-m+1}.$$
    Setting $\nu=l-m+1$ the previous relation reads,
    $$E_m^k\leq \lambda^m C^{m}M\sum^{k-m}_{\nu=0}\binom{\nu+m-1}{\nu}\lambda^{\nu}(1+2C)^{\nu}.$$
    All the terms inside the sum are positive, hence the right hand side of the previous relation is increasing with respect to $k$. Hence it can be bounded for 
    $k=\infty$ as follows,
    $$E_m^k\leq \lambda^m C^{m}M\sum_{\nu=0}^{\infty}\binom{\nu+m-1}{\nu}\lambda^{\nu}(1+2C)^{\nu}$$
    or
    $$E_m^k\leq \lambda^m C^{m}M\sum_{\nu=0}^{\infty}\binom{\nu+m-1}{\nu}(\lambda+2\lambda C)^{\nu}$$
    For the convergence of the previous infinite sum we recall at this point the power series expansion.
    $$\sum_{\nu=0}^\infty \binom{\nu+m-1}{\nu}t^\nu=\frac{1}{(1-t)^m},\quad\mbox{ whenever } |t|<1,$$
    and since $\lambda +2\lambda C<1$ the last bound on $E_m^k$ reads as follows,
    \begin{align}
      E_m^k&\leq \lambda^m C^{m}M\frac{1}{(1-\lambda-2\lambda C)^m}\nonumber\\
           &=M\left(\frac{\lambda C}{1-\lambda-2\lambda C}\right)^m\nonumber
    \end{align}
    Which proves the assertion of the lemma.
  \end{proof}

\begin{remark}
  If moreover we assume $\lambda+3\lambda C<1$ -instead of $\lambda+2\lambda C<1$- then the sequence of bounds on the extremes $\{E_m^k\}$ is decreasing with respect 
  to $m$, since they can be written as,
  $$E_m^k\leq M\left(\frac{\lambda C}{1-\lambda-2\lambda C}\right)^m$$
  hence
  $$\lim_{m\rightarrow \infty}E_m^k\leq \lim_{m\rightarrow \infty}M\left(\frac{\lambda C}{1-\lambda-2\lambda C}\right)^m=0$$
  since $E_m^k\geq 0$ and the fraction $\frac{\lambda C}{1-\lambda-2\lambda C}<1$ because $\lambda+3\lambda C<1$.
\end{remark}

We are ready now to measure the total variation increase due to the oscillations. 
\subsection{Variation}
  In the previous lemma we proved that each extreme separately is of bounded magnitude, uniformly with respect to the time steps $k$. The next theorem is the basic one 
  and states that in addition to the magnitude of the extremes, also the sum of the extremes is also bounded uniformly with respect to the time step $k$.
  \begin{theorem}[Main Result]
    We assume that the requirements Req.(\ref{EvolutionReq}) and Req.(\ref{l-ruleReq}) are satisfied for $\lambda$ such that $\lambda+3\lambda C<1$  by the numerical 
    scheme and the mesh. We more over assume that the sequence $\{a_i,i=1,\infty\}$ is uniformly bounded $a_i\leq CM$. Then the sum of the magnitudes of the extremes 
    is uniformly -with respect to the time step $k$- bounded as follows,
    $$\sum_{m=1}^k E_m^k\leq M \frac{1-\lambda -2\lambda C}{1-\lambda-3\lambda C}$$
  \end{theorem}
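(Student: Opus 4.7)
The plan is to reduce the theorem to summing the geometric bound produced by the previous lemma. Set
$$r = \frac{\lambda C}{1-\lambda-2\lambda C},$$
and observe that the hypothesis $\lambda + 3\lambda C < 1$ is exactly the statement $\lambda C < 1 - \lambda - 2\lambda C$, i.e.\ $r < 1$. In particular, the previous uniform bound on the extremes applies, since it required only $\lambda + 2\lambda C < 1$, and we now have the sharper constraint needed to make the geometric series converge and give a clean closed form.

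First I would invoke the uniform bound $E_m^k \leq M r^m$ for every $m = 1, \dots, k$, valid under the hypotheses by Rel.(\ref{ExtremeUniform}). Summing this bound over $m$ gives
$$\sum_{m=1}^{k} E_m^k \;\leq\; M \sum_{m=1}^{k} r^m.$$
Since every term is nonnegative and $r<1$, the finite sum is dominated by the full geometric series starting at $m=0$:
$$M \sum_{m=1}^{k} r^m \;\leq\; M \sum_{m=0}^{\infty} r^m \;=\; \frac{M}{1-r}.$$

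Finally I would simplify $\frac{1}{1-r}$ by substituting $r = \lambda C / (1-\lambda-2\lambda C)$ and clearing denominators, obtaining
$$\frac{1}{1-r} \;=\; \frac{1-\lambda-2\lambda C}{1-\lambda-2\lambda C - \lambda C} \;=\; \frac{1-\lambda-2\lambda C}{1-\lambda-3\lambda C},$$
which yields the stated bound. There is no real obstacle here beyond correctly identifying the role of the coupling requirement $\lambda + 3\lambda C < 1$: it is precisely what promotes the previous lemma's per-extreme bound into a summable geometric series, and it is also what ensures the denominator $1-\lambda-3\lambda C$ in the final expression is positive. The result is uniform in $k$ because the infinite-series bound does not depend on $k$.
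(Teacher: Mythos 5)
Your proposal is correct and follows essentially the same route as the paper: invoke the uniform per-extreme bound $E_m^k\leq M\left(\frac{\lambda C}{1-\lambda-2\lambda C}\right)^m$, sum the geometric series using $\lambda+3\lambda C<1$ to get ratio less than one, and simplify. If anything your version is slightly cleaner, since you enlarge the sum to start at $m=0$ honestly, whereas the paper writes $\sum_{m=1}^{\infty}r^m=\frac{1}{1-r}$ where an inequality is really meant.
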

  \begin{proof}
    We shall utilise relation (\ref{ExtremeUniform}), which is valid since the requirements of the relevant lemma are satisfied. At the end of the $k$-th step we have 
    $k$ extremes $E_1^k,E_2^k,\ldots,E_k^k$. The sum  -with respect to $m$- of their magnitudes can be bounded as,
    \begin{align}
      \sum_{m=1}^k E_m^k
           &\leq M\sum_{m=1}^k\left(\frac{\lambda C}{1-\lambda -2\lambda C}\right)^m
             \leq M\sum_{m=1}^\infty\left(\frac{\lambda C}{1-\lambda -2\lambda C}\right)^m=M\frac{1}{1-\frac{\lambda C}{1-\lambda -2\lambda C}}\nonumber \\
           &\leq M \frac{1-\lambda -2\lambda C}{1-\lambda-3\lambda C}\nonumber
    \end{align}
    where the second inequality and the equality are valid since  $\lambda +3\lambda C<1$.
  \end{proof}

Summarising and concluding we can state the following theorem, which constitute our target result on the Total Variation Increase.
\begin{theorem}[Total Variation Increase Bound]
  Given the requirements of the previous Theorem, the Total Variation increase due to the oscillations is bounded and given by
  \begin{equation}\label{1stVariationBound}
    \mathrm{TVI}\leq 2M \frac{1-\lambda -2\lambda C}{1-\lambda-3\lambda C}
  \end{equation}
\end{theorem}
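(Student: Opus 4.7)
The plan is to reduce the Total Variation Increase directly to the sum of the magnitudes of the extremes that was bounded in the preceding theorem. Since the initial datum $u_0$ is a pure jump, the entire increase in TV beyond $\TV(u_0)$ must come from the oscillations developed during the combined time-evolution / mesh-reconstruction process, and these oscillations have already been packaged into the quantities $E_m^k$.

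More concretely, I would walk through the numerical profile at time step $k$ from left to right and compare it to the underlying monotone shock. At the top of the shock the values alternate around the upper level $\upsilon$, with successive swings bounded by $E_1^k, E_2^k, \ldots, E_k^k$; at the foot of the shock the situation is symmetric, with swings bounded by the same quantities. Each local extreme of magnitude $E_m^k$ contributes exactly $2E_m^k$ to the sum of nodal absolute differences that defines the discrete TV, because it represents an excursion $E_m^k$ above (or below) the monotone envelope followed by a return. Splitting the accumulated increments into (i) the baseline jump and (ii) these oscillatory excursions gives
\[
\TV(U^k) \;\leq\; \TV(u_0) \;+\; 2\sum_{m=1}^{k} E_m^k,
\]
so that the TV increase due to oscillations satisfies $\mathrm{TVI}\leq 2\sum_{m=1}^{k} E_m^k$.

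Applying the Main Result, which under the standing hypotheses yields $\sum_{m=1}^{k} E_m^k \leq M(1-\lambda-2\lambda C)/(1-\lambda-3\lambda C)$ uniformly in $k$, the bound (\ref{1stVariationBound}) follows immediately. The only delicate point, and the step I would write out most carefully, is the accounting that produces the factor $2$: depending on how one partitions the oscillatory region, this factor arises either as ``up-swing plus down-swing of each extreme'' or as ``top of the shock plus foot of the shock,'' with the symmetric construction from Section~\ref{Section.Analysis} guaranteeing that both regions obey the same $E_m^k$ bound. Beyond this bookkeeping the argument is just a direct substitution, so I do not expect any genuine analytic obstacle.
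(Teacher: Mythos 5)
Your proposal is correct and follows essentially the same route as the paper: the paper's proof is the one-line observation that the variation of the oscillatory part is bounded by twice the sum of the magnitudes of the extremes, followed by substitution of the bound from the Main Result. Your more careful discussion of where the factor $2$ comes from only elaborates on what the paper asserts without proof, so there is no substantive difference.
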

\begin{proof}
  The variation of the oscillatory part is bounded by twice the magnitude of the extremes. So, 
  $$\mathrm{TVI}\leq 2 M\sum_{m=1}^\infty\left(\frac{\lambda C}{1-\lambda -2\lambda C}\right)^m\leq 2M \frac{1-\lambda -2\lambda C}{1-\lambda-3\lambda C}$$
  where the last inequality results from the previous Theorem (Main Result).
\end{proof}

Although we proved our main result, we can gain better insight if we study the contribution each increase factor $a_i$ has on the total variation. For this reason we include the following paragraph.
\subsection{Variation-Revisited}\label{tv-revisit}
We shall follow now another approach that will provide us a with further insight of the "pollution" process and with a sharper bound on the Total Variation Increase. 

This approach differs from the previous one in the sense that instead of adding directly the magnitudes of the extremes $E_m^k$, we compute the contributions of the increase terms $a_i$, $i=1,2,3,\ldots$ in the each one of the extremes $E_m^k$ separately. Then we add this contributions with respect to $a_i$. 

\begin{itemize}
\item[\underline{$a_1$ cont.}]{The contribution of $a_1$ in the $k$-th step,\\
  In the $k$-th time step there exist $k$ extremes and the increase factor $a_1$ is present in each one of these extremes. So we extract the contribution of   
  $a_1$ in all the extremes that are produced during this procedure up to the $k$-th time step.

  The contribution of $a_1$ in the 1-st extreme in the $k$-th time step is given by the relation (\ref{Extreme1}) and reads as
  $$\lambda \lambda^{k-1}(1+2C)^{k-1}$$
  and in the general extreme $m$ the contribution of $a_1$ is 
  $$\lambda^mC^{m-1}\binom{k-1}{k-m}\lambda^k (1+2C)^{k-m}$$
  Summing these contributions with respect to $m$ we end up with the total contribution of $a_1$ in the $k$-th time step,
  \begin{align}
    I_{a_1}^k                  &=\sum_{m=1}^k\lambda^m C^{m-1}\binom{k-1}{k-m}\lambda^{k-m}(1+2C)^{k-m}a_1\nonumber\\
                               &=\lambda^k\sum_{m=1}^k\binom{k-1}{k-m}C^{m-1}(1+2C)^{k-m}a_1\nonumber\\
     (\mbox{for }\nu=k-m)\quad &=\lambda^k\sum^{k-1}_{\nu=0}\binom{k-1}{\nu}C^{k-1-\nu}(1+2C)^{\nu}a_1\nonumber\\
                               &=\lambda^k(1+3C)^{k-1}a_1=\lambda(\lambda+3\lambda C)^{k-1}a_1\nonumber
  \end{align}}
\item[\underline{$a_2$ cont.}]{The contribution of $a_2$ in the $k$-th step,\\
  Similarly we notice that in the $k$-th time step the increase factor $a_2$ contributes in all the extremes except the last one, $m=k$ and its total contributions is
  $$I_{a_2}^k=\lambda(\lambda+3\lambda C)^{k-2}a_2$$}
\item[\underline{$a_m$ cont.}]{The contribution of $a_m$ in the $k$-th step,\\
  More generally, in the $k$-th time step the increase factor $a_m$ contributes in all but $m-1$ extremes (the last $m-1$) in the $k$-th time step ($k\geq m$), 
  and its total contribution is
  $$I_{a_m}^k=\lambda(\lambda+3\lambda C)^{k-m}a_m$$}
\end{itemize}

\begin{remark}
  We note here that as long as $\lambda<\frac{1}{1+3C}$ i.e $\lambda+3\lambda C<1$ each one of these contributions converges to $0$ as $k\rightarrow \infty$,
  \begin{equation}
    \lim_{k\rightarrow\infty}I_{a_m}^k=\lim_{k\rightarrow\infty}\lambda(\lambda+3\lambda C)^{k-m} a_m=0\quad\mbox{if}\quad\lambda<\frac{1}{1+3C}
  \end{equation}
  This is the very essence of the $\lambda$-rule effect. Namely a remeshing procedure which respects the $\lambda$-rule requirement (\ref{l-ruleReq}) at the extremes 
  is able to limit the increase of the variation due to each $a_i$ -eventually kill it- and hence provide us with a control over the total variation of the scheme.
\end{remark}
To finalise this second approach to the Total Variation Increase due to oscillations we continue by summing the contributions of all the $a_i$'s in the $k$-th time 
step. This will result in half the Total Variation Increase due to oscillations in the $k$-th time step.

By the previous talk the following corollary is obvious,
\begin{corollary}[Total contribution in th $k$-th step]
  In the $k$-th time step we have contribution by $a_1,a_2,\ldots,a_k$, with sum,
  \begin{equation}\label{TotalContribution}
    I^k_{tot}=\sum_{m=1}^kI^k_{a_m}=\lambda\sum_{m=1}^k (\lambda+3\lambda C)^{k-m} a_m
  \end{equation}
\end{corollary}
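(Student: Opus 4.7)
The statement is essentially a bookkeeping summation once the per-increase contribution $I^k_{a_m}=\lambda(\lambda+3\lambda C)^{k-m}a_m$ has been established, as the three bullet points immediately preceding the corollary do. So my plan is to treat the corollary as a direct consequence of the identity for $I^k_{a_m}$, with one small verification step that justifies why it is legitimate to obtain half of the $k$-th step Total Variation Increase by simply summing these per-$a_m$ contributions over $m$.

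The plan is therefore to proceed as follows. First, I would invoke the recursive bound \eqref{Extremem} and observe that each $E_m^k$ is a nonnegative linear combination of $a_1,\ldots,a_k$; this linearity is what allows the TV increase (which is bounded, up to a factor of $2$, by $\sum_m E_m^k$) to be reorganized as a sum of contributions $I^k_{a_m}$ indexed by the \emph{source} increase rather than by the extreme in which it appears. Second, I would recall the key identity already derived for the $a_1$ contribution, obtained by substituting $\nu=k-m$ in the inner sum and collapsing a binomial expansion,
\[
\sum_{\nu=0}^{k-1}\binom{k-1}{\nu}C^{k-1-\nu}(1+2C)^{\nu}=(C+(1+2C))^{k-1}=(1+3C)^{k-1},
\]
which yields $I^k_{a_1}=\lambda(\lambda+3\lambda C)^{k-1}a_1$.

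Third, I would note that the argument for a general $a_m$ is structurally identical: the increase $a_m$ first appears in the $m$-th time step as a $\lambda a_m$ contribution to $E_1^m$, after which it propagates through exactly the same recursion \eqref{ExtremeRecursion} as $a_1$ did, only with the initial time step shifted from $1$ to $m$. Hence the same binomial collapse gives $I^k_{a_m}=\lambda(\lambda+3\lambda C)^{k-m}a_m$, valid for all $m\le k$.

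Finally, the corollary itself follows by summing these contributions:
\[
I^k_{tot}=\sum_{m=1}^k I^k_{a_m}=\lambda\sum_{m=1}^k (\lambda+3\lambda C)^{k-m}a_m.
\]
The only mild obstacle is verifying rigorously that reorganizing the double sum (over extremes $m$ and over source indices) is justified and exhaustive; this reduces to checking that every $a_j$ with $j\le k$ appears in some $E_m^k$ and only in those extremes for which $m\le k-j+1$, which is immediate from \eqref{Extremem}. No further computation is required.
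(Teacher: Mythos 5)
Your proposal is correct and follows essentially the same route as the paper: the paper derives $I_{a_1}^k=\lambda(\lambda+3\lambda C)^{k-1}a_1$ by the same substitution $\nu=k-m$ and binomial collapse $\sum_{\nu}\binom{k-1}{\nu}C^{k-1-\nu}(1+2C)^{\nu}=(1+3C)^{k-1}$, extends it to general $a_m$ by the identical time-shifted argument, and then states the corollary as the immediate summation $\sum_{m=1}^k I^k_{a_m}$. Your extra remark justifying the reorganization of the double sum over extremes and source indices is a harmless (and slightly more careful) addition to what the paper leaves implicit.
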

\begin{corollary}[Result 1]
  If we assume that the sequence $a_i$, is bounded i.e there exists $M>0$ such that $a_i\leq CM$ for all $i=1,\ldots,\infty$ and that $\lambda+2\lambda C<1$ then the 
  total contribution in the $k$-th time step reads,
  \begin{equation}\label{2ndVariationBound}
    \mathrm{TVI}\leq \frac{2\lambda C}{1-(\lambda +3\lambda C)}M
  \end{equation}
\end{corollary}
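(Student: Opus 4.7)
The plan is to start from the explicit per-step total contribution formula given by Corollary (\ref{TotalContribution}), substitute the uniform bound on the increase factors, and then dominate the resulting finite sum by an infinite geometric series. Concretely, I would take
$$I^k_{tot} = \lambda \sum_{m=1}^{k} (\lambda + 3\lambda C)^{k-m} a_m,$$
and use the hypothesis $a_m \leq CM$ to extract the common factor $CM$, leaving a pure geometric sum in the variable $\lambda + 3\lambda C$.

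Next, I would reindex via $\nu = k - m$ to rewrite the sum as $\sum_{\nu=0}^{k-1}(\lambda + 3\lambda C)^\nu$. Here the key observation, which the corollary's hypothesis really needs (the stated $\lambda + 2\lambda C < 1$ ought to be strengthened to $\lambda + 3\lambda C < 1$, matching the Coupling Requirement \ref{CouplingReq} and the hypotheses of the Main Result), is that the common ratio is strictly less than $1$. Under that condition the partial sums are dominated by the full geometric series $\sum_{\nu=0}^{\infty}(\lambda + 3\lambda C)^\nu = \frac{1}{1-(\lambda+3\lambda C)}$, uniformly in $k$.

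Combining these steps yields
$$I^k_{tot} \leq \lambda \cdot CM \cdot \frac{1}{1-(\lambda+3\lambda C)} = \frac{\lambda C}{1-(\lambda+3\lambda C)}\,M,$$
uniformly in $k$. Finally, as noted in the proof of the Total Variation Increase Bound, the total variation contribution of the oscillatory part is at most twice the sum of the magnitudes of the extremes (each extreme is traversed once going up and once going down); applying this doubling to $I^k_{tot}$ gives the advertised bound
$$\mathrm{TVI} \leq \frac{2\lambda C}{1-(\lambda+3\lambda C)}\,M.$$

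I do not expect any real obstacle in this proof: once Corollary (\ref{TotalContribution}) is in hand, the argument is a one-line geometric-series estimate plus the factor of two from the TV-to-extreme conversion. The only subtle point worth flagging explicitly is the convergence condition on the geometric ratio, which forces the hypothesis $\lambda + 3\lambda C < 1$ rather than the $\lambda + 2\lambda C < 1$ stated in the corollary — this is the same coupling threshold used throughout the section, so it is consistent with the global assumptions but should be noted for clarity.
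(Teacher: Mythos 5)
Your proof is correct and follows essentially the same route as the paper: bound $a_m\leq CM$ in the total-contribution sum, reindex to a geometric sum in $\lambda+3\lambda C$, bound it by the full geometric series, and double for the TVI (the paper sums the finite geometric series exactly and passes to the limit $k\to\infty$, which is the same estimate). Your observation that the convergence of the geometric series requires $\lambda+3\lambda C<1$ rather than the weaker $\lambda+2\lambda C<1$ stated in the corollary's hypothesis is accurate and worth flagging; the paper's own proof implicitly uses the stronger coupling condition.
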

\begin{proof}
  Since the increase factors $a_i$ are uniformly bounded as $a_i\leq CM$, their total contribution given in  Rel.(\ref{TotalContribution}) reads 
  \begin{align}
    I^k_{tot}&\leq \lambda CM\sum_{m=1}^k (\lambda+3\lambda C)^{k-m} \mathop{=}^{n=k-m}=\lambda CM\sum_{n=0}^{k-1}(\lambda+3\lambda C)^n\nonumber \\
             &=\lambda CM\frac{1-(\lambda+3\lambda C)^k}{1-(\lambda +3\lambda C)}\nonumber
  \end{align}
  the previous sequence, in the right hand side, is increasing with respect to the time step $k$, so by taking the limit as $k\rightarrow\infty$ we deduce an uniform 
  -with respect to $k$- bound on the total contribution
  $$I_{tot}^\infty=\lim_{k\rightarrow\infty}I_{tot}^k\leq \lambda CM\lim_{k\rightarrow\infty}\frac{1-(\lambda+3\lambda C)^k}{1-(\lambda +3\lambda C)}
                  =\frac{\lambda C}{1-(\lambda +3\lambda C)}M$$
  This is exactly the result we were looking for since now, the Total Variation Increase due to oscillations is bounded
  $$\mathrm{TVI}\leq 2\cdot I_{tot}^\infty\leq \frac{2\lambda C}{1-(\lambda +3\lambda C)}M$$
\end{proof}
\begin{corollary}[Result 2]
  If we assume that the sequence $a_i$, $i=1\ldots\infty$ is uniformly bounded as $a_i\leq CM=C\TV(u_0)$ then the previous bound on the total variation increase 
  becomes, 
  $$2\cdot I_{tot}^\infty\leq \frac{2\lambda C}{1-(\lambda +3\lambda C)}\TV(u_0)$$
\end{corollary}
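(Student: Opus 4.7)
The plan is to recognize that Corollary (Result 2) is an immediate specialization of Corollary (Result 1): one simply identifies the generic uniform bound $M$ with $\TV(u_0)$. So the real task is to verify that the hypothesis $a_i \leq CM$ of Result 1 holds with this particular choice $M = \TV(u_0)$.

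First I would go back to Definition \ref{a_i}, which sets $\hat a_k = (|\hat u_i^k - \hat u_{i+1}^k| - 2E_1^k)_+$ and $a_k = C\hat a_k$. The quantity $|\hat u_i^k - \hat u_{i+1}^k|$ is the jump between the top-of-shock node and its right neighbour, measured \emph{along} the shock profile. Since the initial condition is the unit jump at $x_0$, the total variation of the exact solution is preserved, $\TV(u_0) = 1$, and the shock profile at any later step is contained in the vertical extent of the initial jump plus the oscillatory layers of size $E_1^k$ above and below. Because the definition of $\hat a_k$ already subtracts off the $2E_1^k$ oscillatory contribution and takes the positive part, what remains is bounded by the main shock variation $\TV(u_0)$:
\[
  \hat a_k \;\leq\; \bigl(|\hat u_i^k - \hat u_{i+1}^k|\bigr)_+ - 2E_1^k \cdot \mathbf{1}_{\{\hat a_k > 0\}} \;\leq\; \TV(u_0),
\]
so that $a_k = C\hat a_k \leq C\,\TV(u_0)$ uniformly in $k$.

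With this bound established, I would invoke Corollary (Result 1) with the specific choice $M = \TV(u_0)$. All hypotheses of Result 1 are met: the coupling condition $\lambda + 3\lambda C < 1$ is in force by Requirement \ref{CouplingReq} (which implies the weaker $\lambda + 2\lambda C < 1$ needed by Result 1), and the uniform bound on the $a_i$ has just been verified. Substituting yields
\[
  2\cdot I_{tot}^\infty \;\leq\; \frac{2\lambda C}{1-(\lambda + 3\lambda C)}\,\TV(u_0),
\]
which is the claimed bound.

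The only point that requires a moment of thought — but is not a genuine obstacle — is the justification that $|\hat u_i^k - \hat u_{i+1}^k| - 2E_1^k \leq \TV(u_0)$. This is exactly the geometric content built into Definition \ref{a_i}: the shock variation was decomposed at the start of the section into the top oscillatory layer of size $E_1^k$, the main jump of size $\TV(u_0)$, and the bottom oscillatory layer of size $E_1^k$, and $\hat a_k$ was designed to isolate the contribution of the middle piece. Hence no new estimate is required and the corollary follows directly.
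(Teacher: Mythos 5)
Your proposal is correct and matches the paper's intent: the paper gives no separate proof for Result~2, treating it as an immediate substitution of $M=\TV(u_0)$ into Result~1, exactly as you do. Your additional verification that $a_k = C\hat a_k \le C\,\TV(u_0)$ from the definition of $\hat a_k$ and the decomposition of the shock's variation into $E_1^k + \TV(u^0) + E_1^k$ is consistent with the paper's earlier discussion, though strictly speaking the corollary's statement takes that bound as a hypothesis rather than something to be proved.
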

\begin{remark}
  This result is even better than the previous one given Rel.(\ref{2ndVariationBound}) since the bound that provides on the increase of the Total Variation due to 
  oscillations is directly related to the variation of the initial condition $u_0$.
\end{remark}
With more delicate assumptions on the increases $a_i$ we get the following Corollary,
\begin{corollary}[Result 3]
  If we assume that the sum of all increases $\sum_{i=0}^\infty a_i$ is finite $\sum_{i=0}^\infty a_i=A<\infty$ then the Total Variation Increase due to oscillations 
  diminishes with respect to the time step $k$.
\end{corollary}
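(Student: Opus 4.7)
The starting point is the closed-form expression for the total contribution at step $k$ derived in the corollary just above: writing $r=\lambda+3\lambda C$,
$$I_{tot}^k=\lambda\sum_{m=1}^k r^{k-m}a_m,$$
and the Total Variation Increase at step $k$ is bounded by $2\,I_{tot}^k$. Consequently, to show that $\mathrm{TVI}$ diminishes with $k$ it suffices to prove that $I_{tot}^k\to 0$ as $k\to\infty$. The Coupling Requirement~\ref{CouplingReq} guarantees $r<1$, so the geometric sequence $(r^n)_{n\ge 0}$ is summable; by hypothesis the sequence $(a_m)$ is also summable.

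The plan is a standard $\e$--splitting of the convolution $I_{tot}^k/\lambda = \sum_{m=1}^k r^{k-m}a_m$ at a threshold $K$ chosen so that the $\ell^1$-tail of $(a_m)$ is small, and then to exploit the geometric decay of $r^{k-m}$ on the head $m\le K$. Given $\e>0$, using $\sum_{i=1}^\infty a_i=A<\infty$, I would first fix $K$ with $\lambda\sum_{m>K}a_m<\e/2$. For $k>K$ the sum splits as
$$I_{tot}^k=\lambda\sum_{m=1}^K r^{k-m}a_m+\lambda\sum_{m=K+1}^k r^{k-m}a_m.$$
On the high-index piece $r^{k-m}\le 1$, so it is bounded by $\lambda\sum_{m>K}a_m<\e/2$. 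On the low-index piece $r^{k-m}\le r^{k-K}$ (since $m\le K$), and this piece is bounded by $\lambda r^{k-K}A$, which tends to $0$ as $k\to\infty$ because $r<1$; so for all $k$ sufficiently large the low-index piece is also below $\e/2$.

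Combining the two bounds gives $I_{tot}^k<\e$ for all sufficiently large $k$, hence $I_{tot}^k\to 0$ and therefore $\mathrm{TVI}\to 0$ as claimed. I do not anticipate a real obstacle here: the argument is essentially the standard fact that the convolution of two $\ell^1$ sequences lies in $\ell^1$ (and a fortiori tends to zero), so the only point to check is $r<1$, which is immediate from the Coupling Requirement already invoked in the previous corollaries. A purely formal alternative would be to cite Young's convolution inequality directly, but the explicit split above has the advantage of being elementary and of exposing that the decay rate of $\mathrm{TVI}$ is governed by the tail behaviour of $(a_i)$ together with the geometric factor $r^{k-K}$.
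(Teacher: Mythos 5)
Your proof is correct, but it takes a different route from the paper's. The paper does not estimate $I_{tot}^k$ directly; instead it sums over $k$ and observes that $\sum_{k\ge 1} I_{tot}^k=\lambda\sum_{k}\sum_{j=1}^k(\lambda+3\lambda C)^{k-j}a_j$ is (up to the factor $\lambda$) the sum of the terms of the Cauchy product of the two convergent series $\sum a_i$ and $\sum(\lambda+3\lambda C)^i$; since that product series converges, its general term $I_{tot}^k$ must tend to $0$. You instead prove $I_{tot}^k\to 0$ by the elementary $\e$-splitting of the convolution at a threshold $K$, bounding the tail by the $\ell^1$-smallness of $(a_m)$ and the head by $\lambda r^{k-K}A$ with $r=\lambda+3\lambda C<1$. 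The two arguments rest on the same two facts ($r<1$ and $\sum a_i<\infty$) and are essentially the statement that the convolution of two summable nonnegative sequences is summable, which you yourself note as the formal alternative. What each buys: your split is self-contained, avoids invoking Cauchy-product convergence, and makes visible how the decay of $I_{tot}^k$ is governed by the tail of $(a_i)$ and the geometric factor; the paper's version yields the formally stronger conclusion that the sequence $(I_{tot}^k)_k$ is itself summable over time, from which the vanishing is an immediate consequence. Both are complete proofs of the stated corollary.
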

\begin{proof}
  We note that the following sums converge
  $$\sum_{i=0}^\infty (\lambda+3\lambda C)^i=\frac{1}{1-(\lambda +3\lambda C)}<\infty$$
  $$\sum_{i=0}^\infty a_i<\infty$$
  Moreover the sum 
  $$\sum_{k=1}^\infty I^k_{tot}=\sum_{k=1}^\infty \left(\lambda\sum_{j=1}^k (\lambda+3\lambda C)^{k-j} a_j\right)
                               =\lambda\sum_{k=1}^\infty \left(\sum_{j=1}^k (\lambda+3\lambda C)^{k-j} a_j\right)$$ 
  constitutes the sum of the terms of the Cauchy product of the series $\sum_{i=1}^\infty a_i$ and $\sum_{i=1}^\infty (\lambda+3\lambda C)^i$. So we deduce that the 
  following sum also converges,
  $$\sum_{k=1}^\infty I^k_{tot}<\infty$$
  and hence the sequence $I^k_{tot}$ must converge to 0, so
  $$I_{tot}^\infty=\lim_{k\rightarrow \infty}I^k_{tot}=0$$
\end{proof}
\begin{remark}
  The last result states that if the sum $\sum a_i$ is finite then the overall increase of the variation due to the oscillations produced by the $a_i$s' 
  diminishes with respect to the time step $k$.

  Moreover we note that the requirement $\sum a_i<\infty$ is supported numerically, since the first node on the shock at the right hand side of the first, is 
  always very close to the first extreme. This is due to the high density of the mesh at the shock. Hence by the definition of the increase factors Def.(\ref{a_i}), 
  $\hat a_k=0$ and so $a_k=0$.
\end{remark}

We have devised two different bounds concerning the Total Variation Increase due to oscillations. The first was immediate summation of the bounds $E_m^k$ of the magnitudes of the extremes and resulted in the bound Rel.(\ref{1stVariationBound}). The second one came by investigating the contributions of the increase factors $a_i$ and resulted in the bound Rel.(\ref{2ndVariationBound}). 

\paragraph{Comparison of the two bounds}
  We start this paragraph by restating the two bounds on the Total Variation. The first given in Rel.(\ref{1stVariationBound})
  $$B_1= 2M \frac{1-\lambda -2\lambda C}{1-\lambda-3\lambda C}$$
  and the 2-nd given in Rel.(\ref{2ndVariationBound})  
  $$B_2= \frac{2\lambda C}{1-\lambda -3\lambda C}M$$

  Before the comparison, a comment on the nature of the second bound
%

  To compare the two bounds one can examine their ratio, that is the fraction of the bound Rel.(\ref{2ndVariationBound}) over the bound of 
  Rel.(\ref{1stVariationBound}), 
  \begin{lemma}(Comparison of the bounds $B_2$ and $B_1$)
    If $\lambda+3\lambda C<1$ then $\frac{B_2}{B_1}<1$. If moreover $\lambda +4\lambda C<1$ then $\frac{B_2}{B_1}<\frac{1}{2}$
  \end{lemma}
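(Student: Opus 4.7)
The plan is to reduce the comparison to a single ratio and then verify the two inequalities by straightforward algebraic manipulation of the hypotheses.

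First I would write
\[
\frac{B_2}{B_1}=\frac{2\lambda C\, M/(1-\lambda-3\lambda C)}{2M(1-\lambda-2\lambda C)/(1-\lambda-3\lambda C)}=\frac{\lambda C}{1-\lambda-2\lambda C},
\]
so that the common denominator $1-\lambda-3\lambda C$ cancels and $M$ cancels, leaving a ratio that depends only on $\lambda$ and $C$. Before doing anything else I would check that under the standing hypothesis $\lambda+3\lambda C<1$ the remaining denominator is strictly positive, which is immediate since $1-\lambda-2\lambda C>1-\lambda-3\lambda C>0$.

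For the first claim, I would rewrite $\lambda+3\lambda C<1$ as $\lambda C<1-\lambda-2\lambda C$; dividing by the positive quantity $1-\lambda-2\lambda C$ gives $\tfrac{\lambda C}{1-\lambda-2\lambda C}<1$, i.e.\ $B_2/B_1<1$. For the second claim, the strengthened hypothesis $\lambda+4\lambda C<1$ rearranges to $2\lambda C<1-\lambda-2\lambda C$, and dividing again by $1-\lambda-2\lambda C>0$ yields $\tfrac{\lambda C}{1-\lambda-2\lambda C}<\tfrac12$, which is $B_2/B_1<\tfrac12$.

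There is essentially no obstacle here: the only subtlety is being careful that the denominators appearing in the ratio are positive so that the direction of the inequalities is preserved, and this is guaranteed by the already-assumed Coupling Requirement $\lambda+3\lambda C<1$. Once the ratio is reduced to $\lambda C/(1-\lambda-2\lambda C)$ the two claims become direct rearrangements of the two hypotheses.
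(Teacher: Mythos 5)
Your proposal is correct and follows exactly the same route as the paper: compute the ratio, cancel the common factor $1-\lambda-3\lambda C$ and $M$, and rearrange the respective hypotheses into $\lambda C<1-\lambda-2\lambda C$ and $2\lambda C<1-\lambda-2\lambda C$. Your explicit check that the denominator $1-\lambda-2\lambda C$ is positive is a small but welcome addition that the paper leaves implicit.
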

  \begin{proof}
    The ratio of the bounds $B_2$ and $B_1$ is
    $$\frac{B_2}{B_1}=\frac{\frac{2\lambda CM}{1-\lambda-3\lambda C}}{2M\frac{1-\lambda -2\lambda C}{1-\lambda-3\lambda C}}=\frac{\lambda C}{1-\lambda-2\lambda C}$$
    If $\lambda+3\lambda C<1$ then $\frac{B_2}{B_1}<1$ since $\lambda C<1-\lambda-2\lambda C$, hence the later bound $B_2$ is sharper. If moreover 
    $\lambda +4\lambda C<1$ instead of $\lambda+3\lambda C<1$ then $\frac{B_2}{B_1}<\frac{1}{2}$ since $2\lambda C<1-\lambda-2\lambda C$.
  \end{proof}
  The meaning of this proposition is that by a careful selection of the respect factor $\lambda$ the bounds on the increase of the variation in the second approach can 
  be significantly better than that of the first approach.

\section{Main Adaptive Scheme (MAS)}\label{Section.MAS}
In this section the procedure that is responsible for the creation and manipulation of non-uniform meshes, their connection with the numerical solutions and the time evolution. We start by providing the definition of MAS once more,
\startenv{
give references
}\stopenv
\begin{definition}[Main Adaptive Scheme (MAS)]
  Given, at time step $n$, the mesh $M_x^n=\{a=x_1^n<\cdots<x_N^n=b\}$ and the approximations $U^n=\{u_1^n,\ldots,u_N^n\}$, the steps of the (MAS) are as 
  follows:
  \begin{itemize}
    \item[1.](Mesh Reconstruction)\\
      Construct new mesh $M_x^{n+1}=\{a=x_1^{n+1}<\cdots<x_N^{n+1}=b\}$
    \item[2.](Solution Update)\\
      Using the old mesh $M_x^n$ the approximations $U^n$ and the new mesh $M_x^{n+1}$:
      \begin{itemize}
         \item[2a.] construct a piecewise linear function $V^n(x)$ such that $V^n|_{M_x^n}=U^n$ 
         \item[2b.] define the updated approximations $\hat U^n=\{\hat u_i^n,\ldots,\hat u_N^n\}$ as $\hat U^n= V^n|_{M_x^{n+1}}$
      \end{itemize}
    \item[3.](Time Evolution)\\
      Use the new mesh $M_x^{n+1}$, the new approximations $\hat U^n$ and the numerical scheme to march in time and compute 
      $U^{n+1}=\{u_1^{n+1},\ldots,u_N^{n+1}\}$
    \item[4.](Loop)\\
      Repeat the Step 1.-3. with $M_x^{n+1}$, $U^{n+1}$ as initial data.
  \end{itemize}
\end{definition}

The Mesh reconstruction procedure (Step 1.) is a way of relocating the nodes of the mesh according to the geometric \emph{information} contained in  a discrete function. The basic idea of the mesh reconstruction is simple and geometric 
\begin{center}
"in areas where the numerical solution is \emph{smoother/flatter} we need less nodes where, in contrary, in areas where the numerical solution is less \emph{smooth/flat} more nodes are in order"
\end{center}

The key point in this procedure is the way that we measure the geometric \emph{information} of the discrete function. This is accomplished by using two auxiliary functions. The first one -\emph{estimator} function- measures the geometric \emph{information} of the discrete function and the second one - \emph{monitor} function- redistributes the nodes according to the \emph{information} measured by the \emph{estimator} function.

Some examples of estimator functions are the arclength estimator, the gradient estimator and the curvature estimator. Throughout this work we shall use the curvature estimator.

\paragraph{The curvature estimator function}

To start with we consider a smooth (in the classical sense) function $U$. The function $K_U(x)$ that measures the curvature of the smooth function $U$ is defined as follows,
$$K_U(x)=\frac{\left|U''(x)\right|}{\left(1+(U'(x))^2\right)^{3/2}}$$

The functions that we deal with (numerical solutions) are not smooth, they are merely point values and hence a discrete analog of the curvature estimator functions is needed. To gain a discrete analog of the estimator function we just have to discretize the derivatives that appear in the smooth estimator. Again several choices are possible, but the one that we use throughout this work is the following,
$$K_i^{dscr}=\frac{\frac{2}{x_{i+1}^n-x_{i-1}^n}\left|\frac{u_i^n-u_{i-1}^n}{x_i^n-x_{i-1}^n}-\frac{u_{i+1}^n-u_i^n}{x_{i+1}^n-x_i^n} \right|}
  {\left(\left(1+\left(\frac{u_i^n-u_{i-1}^n}{x_i^n-x_{i-1}^n}\right)^2\right)
    \left(1+\left(\frac{u_{i+1}^n-u_i^n}{x_{i+1}^n-x_i^n}\right)^2 \right)
    \left(1+\left(\frac{u_{i+1}^n-u_{i-1}^n}{x_{i+1}^n-x_{i-1}^n}\right)^2 \right)\right)^{1/2}}$$
where $M_x^n=\{x_i^n,\ i=1,\dots,N\}$ and $U^n=\{u_i^n,\ i=1,\dots,N\}$ are the non-uniform mesh, and the approximate solution at time step $n$.

With the discrete estimator we can evaluate the respective geometric property of the numerical solution itself. By performing a point by point evaluation of this discrete estimator results in a finite sequence that contains the measured information of every node $(x_i^n,u_i^n)$. That is, 
$$K_U^{dscr}=\left\{(x_1^n,K^{dscr}_1),\cdots,(x_N^n,K^{dscr}_N)\right\}$$
\begin{remark}
  We note here that the proposed mesh reconstruction procedure, assumes positive values on behalf of the discrete estimator. So now on we assume that 
  $K^{dscr}_i\geq 0$ -even though this is obvious for our discrete curvature estimator.
\end{remark}
\begin{remark}
  In areas where the numerical solution is flat, all the nodes yield $K_i^{dscr}=0$, hence no information is extracted. To avoid such a case we select a 
  $\varepsilon>0$ and set $K^{dscr}_i=\max\{K^{dscr}_i,\varepsilon\}$ for every $i$. A typical value of $\varepsilon\approx 10^{-15}$.
\end{remark}
\begin{remark}
  In the initial condition of a Riemann problem all the nodes (except for the 2 nodes at the top and bottom of the discontinuity) attain the minimum "information" 
  $K^{dscr}_i=\varepsilon$. The other two nodes attain very large "information". This abrupt change of "information" yields a non smooth non-uniform mesh. To avoid 
  such abrupt information change we use a constant $pw>0$ and set $K^{dscr}_i=(K^{dscr}_i)^{pw}$ for every $i$. This results in a smoother transition of consequent 
  discrete estimator values, a typically value of $pw\approx 0.9$.  
\end{remark}

Finally, these points, i.e $\left\{(x_1^n,K^{dscr}_1),\cdots,(x_N^n,K^{dscr}_N)\right\}$ are interpolated by a piecewise linear function $I_{K^{dscr}}$. 

We move on to the monitor function that will provide us with the new nodes. 

\paragraph{The monitor function}
The evaluation of the monitor function starts from its discrete analog, that is we first evaluate the monitor function in every old node $x_i^n$ and the we construct the continuous monitor function by linear interpolation of the discrete monitor values.  

We integrate the piecewise linear function $I_{K^{dscr}}$ to find the value of the discrete monitor function in every node $x_i^n$,
$$M_{u^n}^{x_i^n}=\int_0^{x_i^n}I_{u^n}(x)dx.$$
This results in a new sequence,
$$\left\{(x_i^n,M_{u^n}^{x_i^n}), i=0,\cdots,N)\right\}$$
This sequence is positive and strictly increasing since $K_i^{dscr}>0$ for every $i=0,\cdots, N$.

Finally, we interpolate over the values of this sequence by a piecewise linear function $M_{U^n}(x)$, which is continuous, positive and strictly increasing and so attains it maximum at the right end $M(1)$. We are ready now to relocate the nodes of the mesh. 
\begin{figure}
\begin{tabular}{cc}
  \includegraphics[width=7cm]{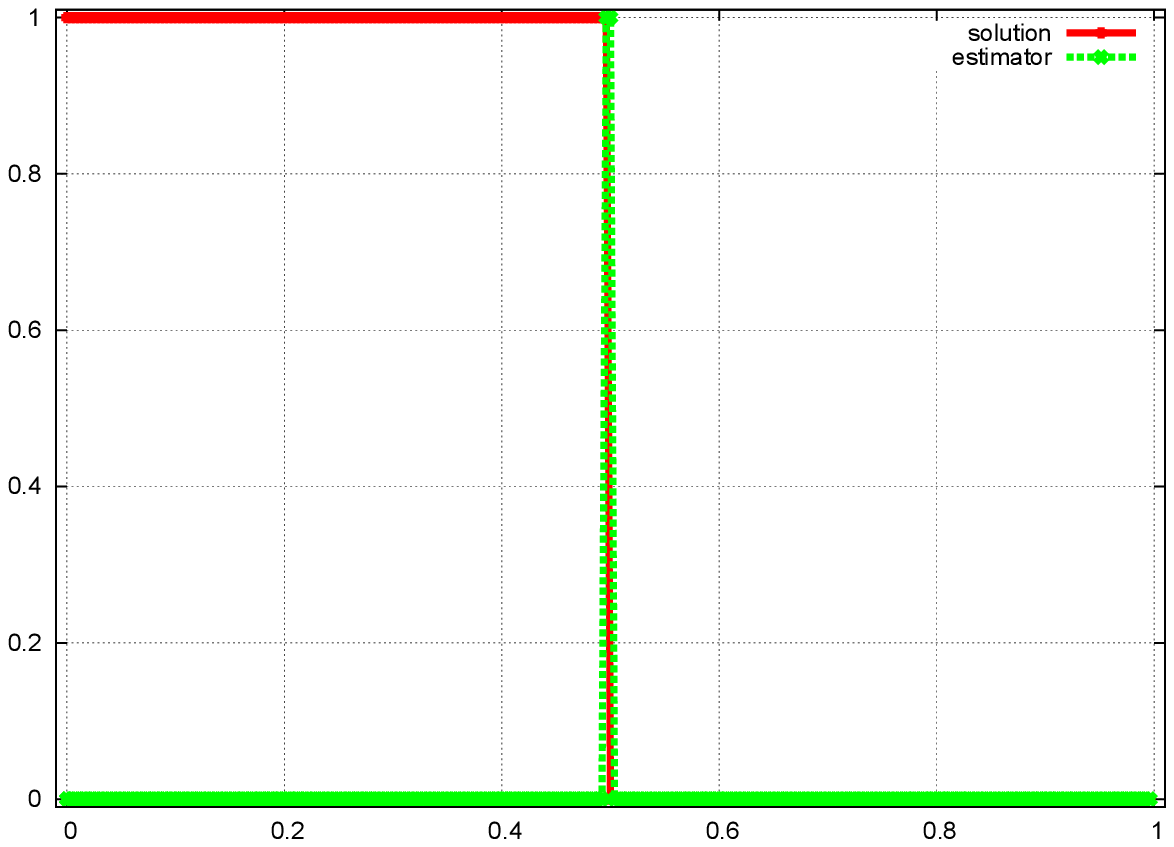}& \includegraphics[width=7cm]{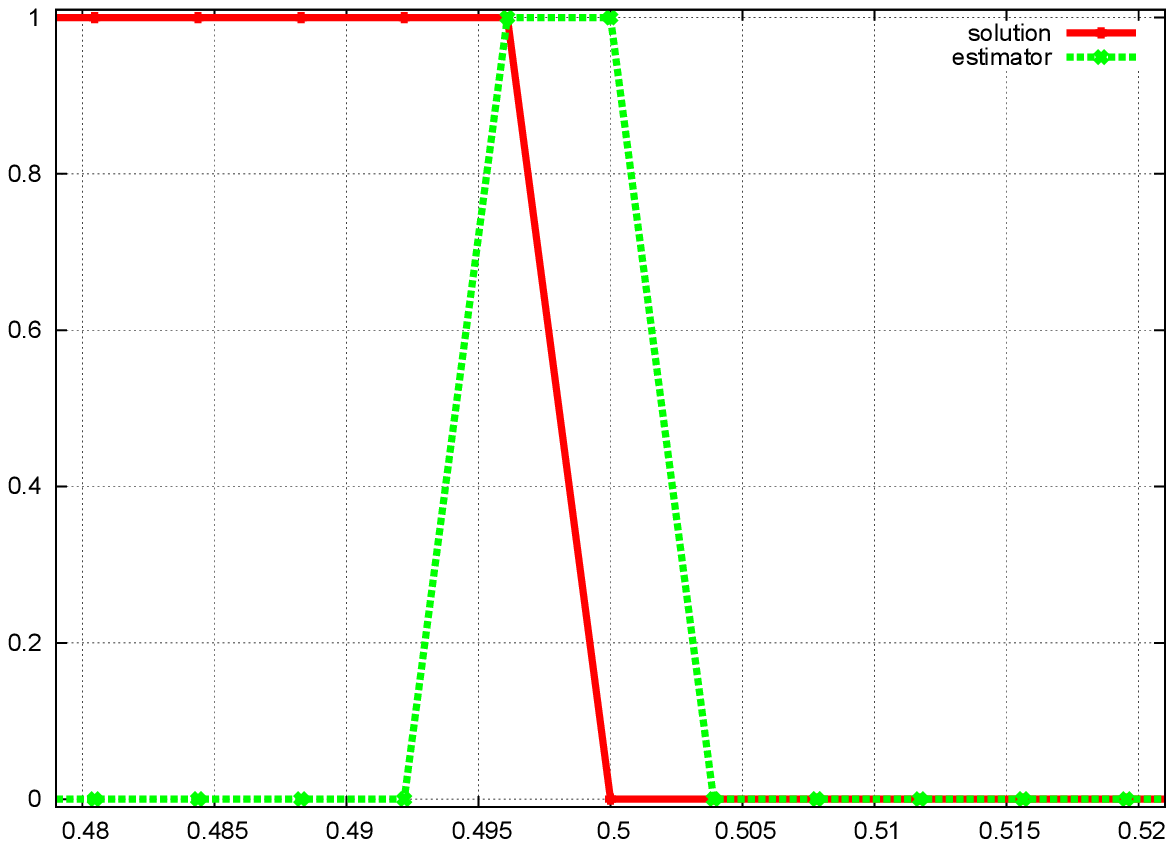}\cr
\end{tabular}
\caption{A typical estimator function is depicted along with the respective numerical solution. The right graph is a focused version of the left one.}\label{Graph.EstimatorVsSol}
\end{figure}

\begin{figure}
\begin{tabular}{cc}
  \includegraphics[width=7cm]{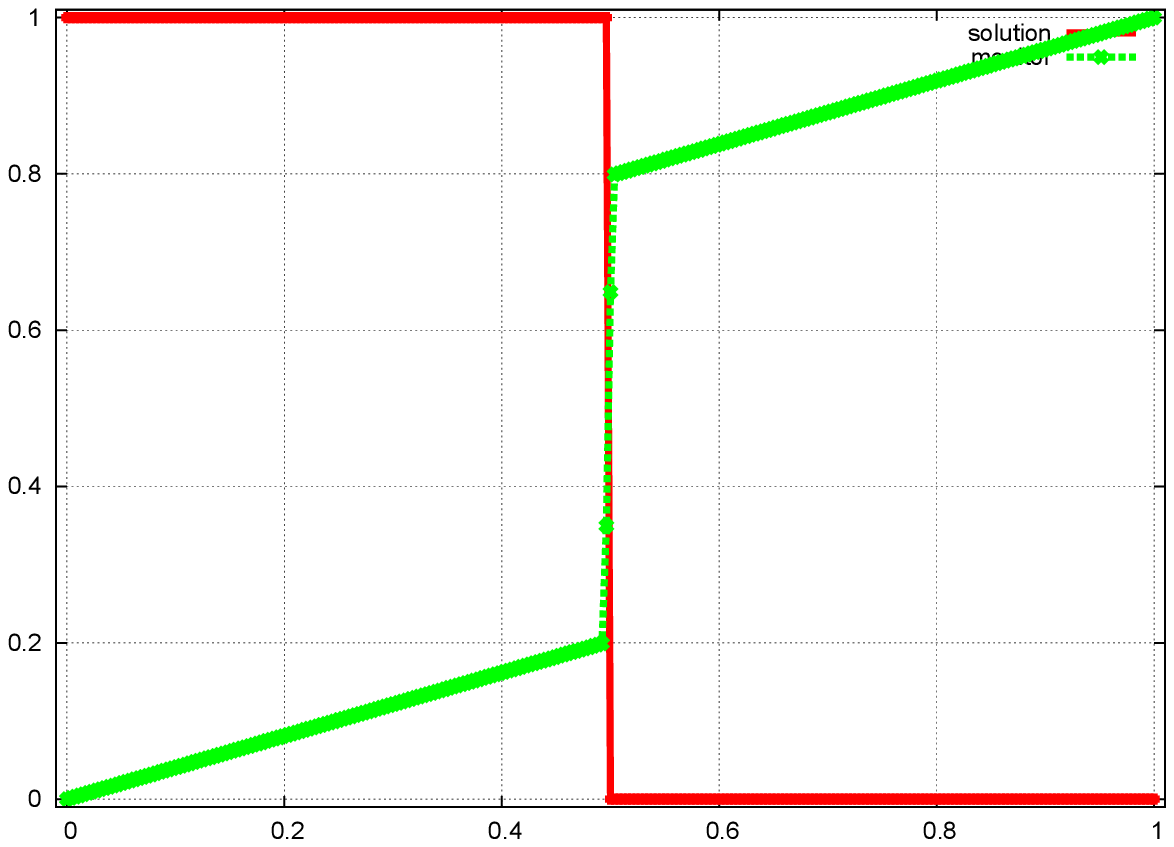}& \includegraphics[width=7cm]{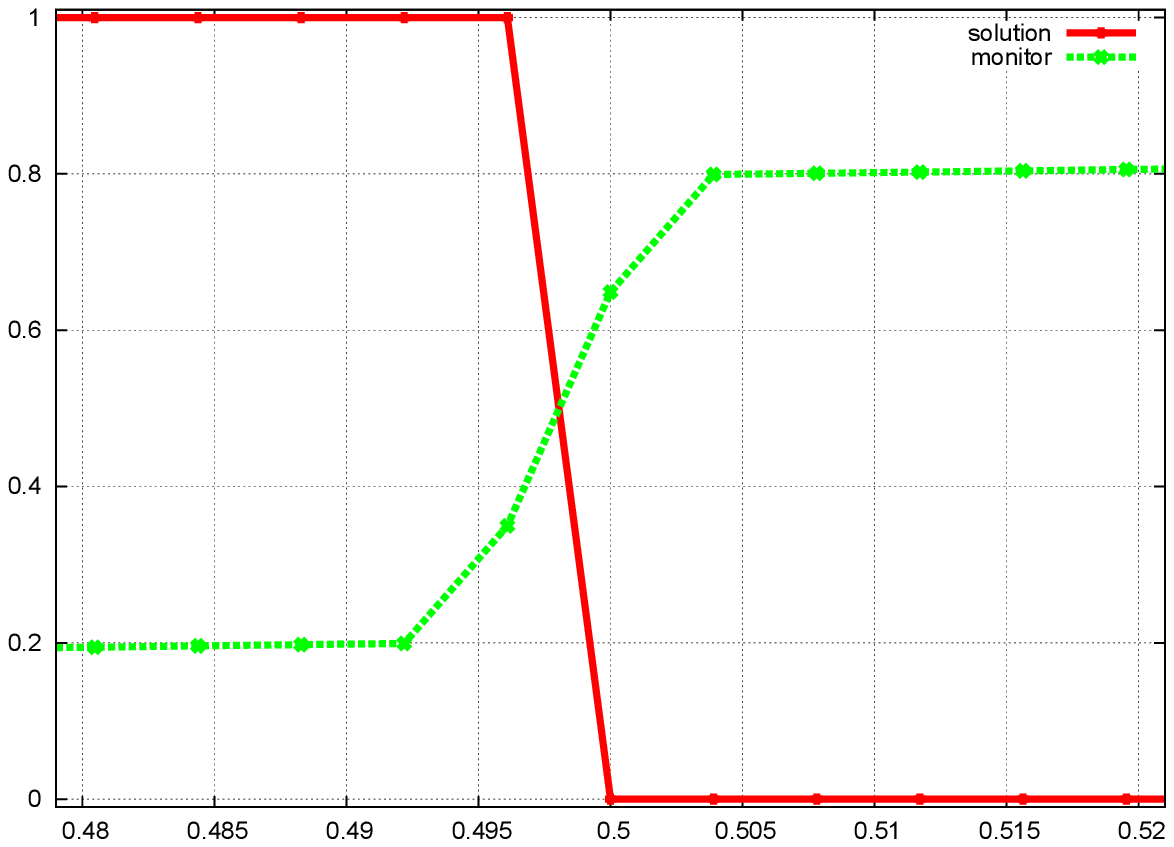}\cr
\end{tabular}
\caption{A typical monitor function is depicted along with the respective numerical solution. The right graph is a focused version of the left one.}\label{Graph.MonitorVsSol}
\end{figure}

\begin{figure}
\begin{tabular}{cc}
  \includegraphics[width=7cm]{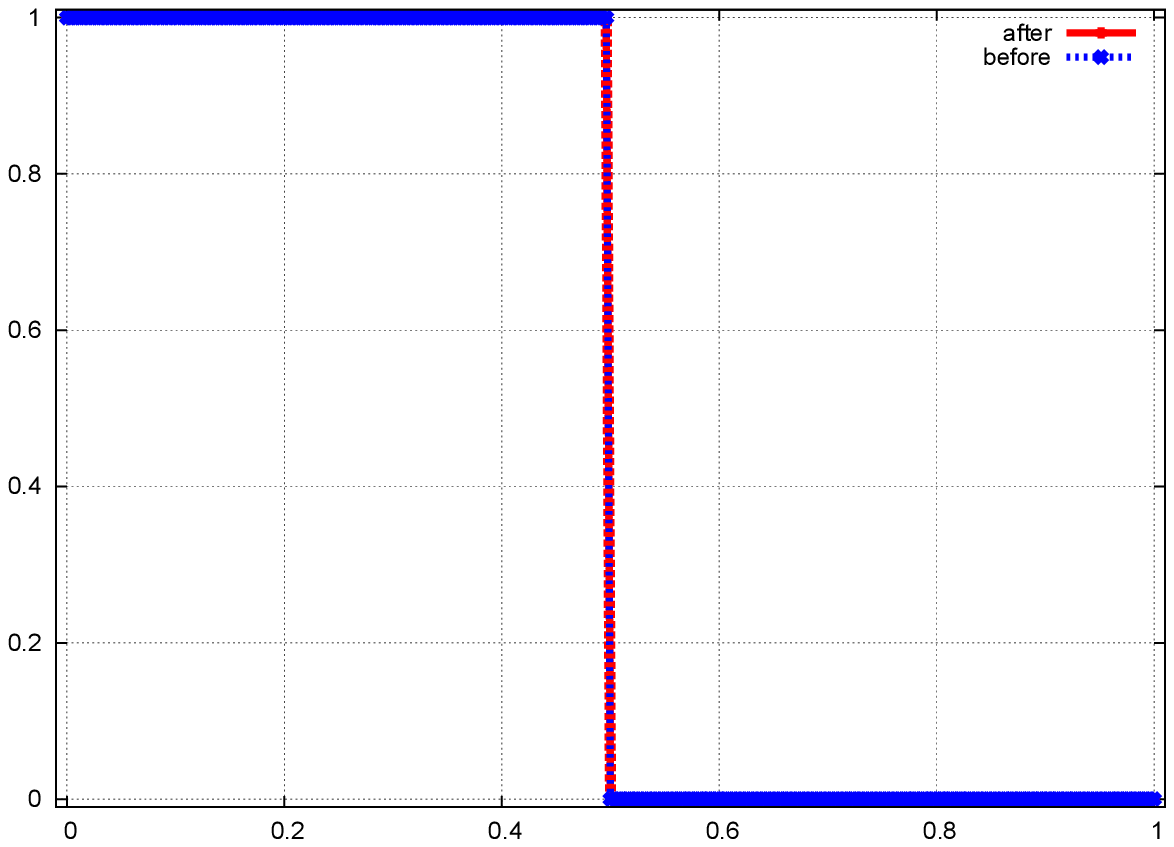}& \includegraphics[width=7cm]{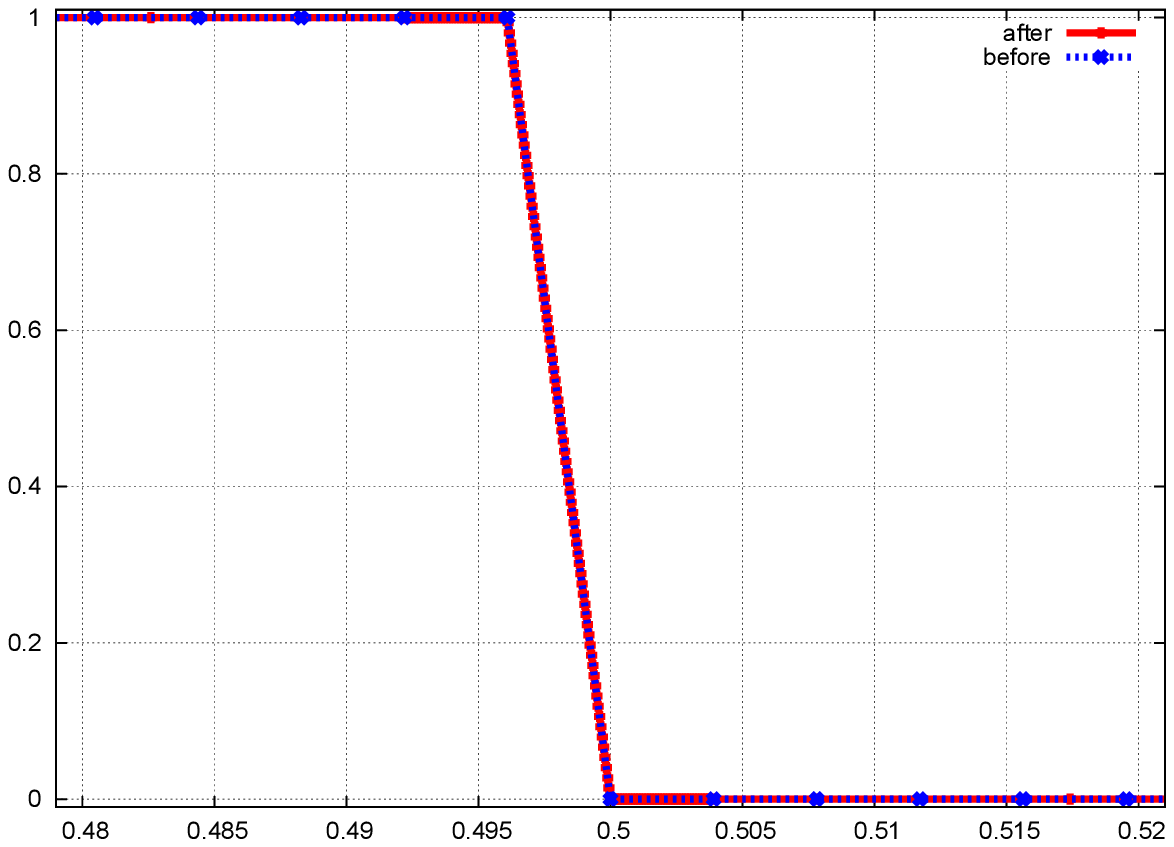}\cr
\end{tabular}
\caption{We see in this graph the result of the relocation procedure. The initial function before the mesh reconstruction procedure takes place is depicted in red. In 
  blue we can see the reconstructed numerical solution, after the mesh reconstruction procedure. We can see that the density of the nodes is higher around the area of 
  interest of the numerical solution. We also notice that there are nodes placed along the slope of the shock.}\label{Graph.BeforeVsAfter}
\end{figure}

\paragraph{Mesh reconstruction}
  What we now need is a new set of nodes, $\{x_i^{n+1},i=0,\cdots,N\}$, with $x_0^{n+1}=0$ such that they equi-distribute the total information $M(1)$ that we 
  measured. This is accomplished by solving -recursively with respect to $x_{i+1}^{n+1}$- the system,

$$\begin{cases} x_0^{n+1}=0,\cr 
                M_{U^n}(x_{i+1}^{n+1})-M_{U^n}(x_i^{n+1})=\frac{1}{N}M_{U^n}(1)
  \end{cases},\quad i=0,\cdots,N-1$$
\begin{remark}
  It is obvious the last new node $x_{N}^{n+1}$ shall be the right end of our interval, that is $x_{N}^{n+1}=1$. Moreover the fact that $M_{U^n}(x)$ is strictly 
  increasing, allows for inversion and since it is piecewise linear reduces the computational cost of the solution of the previous system to $\mathcal O(N)$.
\end{remark}

\begin{remark}
  Let us note that the number $N$ of nodes is constant. Let us also note that the mesh reconstruction procedure is not related with the numerical scheme that we use 
  for the evolution part of the problem, it is merely connected to the geometry of the numerical solution itself. 
\end{remark}

In Figures \ref{Graph.EstimatorVsSol} and \ref{Graph.MonitorVsSol} a typical Estimator and Monitor function are depicted respectively. In Figure \ref{Graph.BeforeVsAfter} one can see the affect that the node relocation procedure has on initial condition.

\section{Computational considerations}\label{Section.Considerations}
In this section we describe the numerical implementation of the Coupling Req.(\ref{CouplingReq}), and the connection of the $\lambda$-rule Req.(\ref{l-ruleReq}) with the $\lambda$-rule effect -as described in Rem.(\ref{Rem.l-rule})- when several nodes are located in between two consequent extremes.

\paragraph{Numerical implementation of the coupling requirement} 
We start by restating the coupling requirement,
\begin{requirement*}[Coupling requirement]
  The constants $C$ of the evolution Req.(\ref{EvolutionReq}) and $\lambda$ of the mesh reconstruction  Req.(\ref{l-ruleReq}) are connected via the following relation:
  \begin{equation}
    \lambda+3\lambda C<1
  \end{equation}
\end{requirement*}
In this section we shall discuss the numerical implementation of this requirement. We start by defining $I_j^n$ to be the set of indices of the nodes $x_j^{n+1}$ that are placed -after the mesh reconstruction step- "close" to a position of a local extreme of $U^n$, that is
$$I_j^n=\{j\  |\  x_j^{n+1}\in[x_i^n,x_{i+1}^n) \mbox{ for some } i \mbox{ and } U^n \mbox{ exhibits local extreme at } x_i^n \mbox{ or }x_{i+1}^n\}.$$
For every $j\in I_j^n$, $U^n$ exhibits a local extreme either at $x_i^n$ or $x_{i+1}^n$, so we set:
$$A_j=\frac{x_{i+1}^n-x_j^{n+1}}{x_{i+1}^n-x_i^n}(1+3C)$$
or respectively
$$A_j=\frac{x_j^{n+1}-x_i^n}{x_{i+1}^n-x_i^n}(1+3C)$$
where the constant $C$ is related to the numerical scheme under discussion. With this notation the coupling requirement reads for the discrete case as follows:
$$\max_{j\in I_j^n}A_j <1$$

\begin{figure}[t]
  \begin{tabular}{cc}
    \includegraphics[width=7cm]{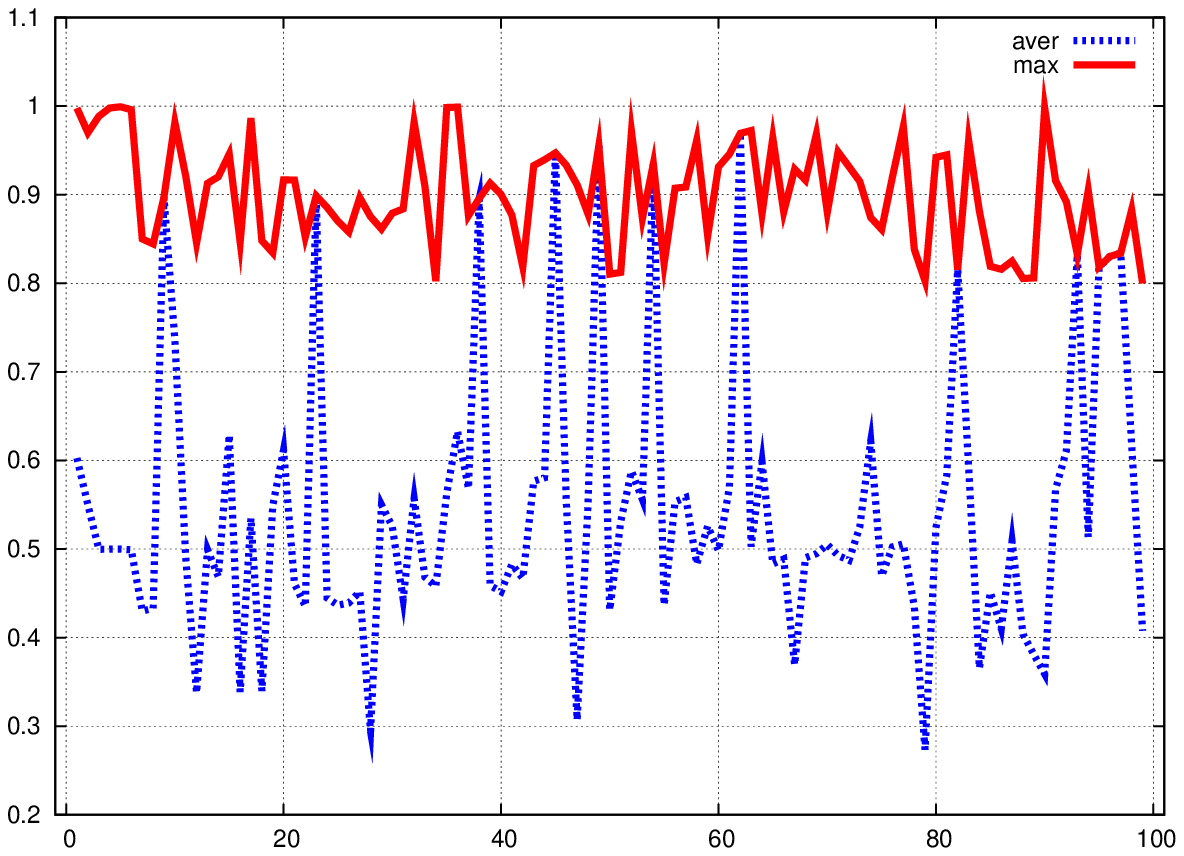}&  \includegraphics[width=7cm]{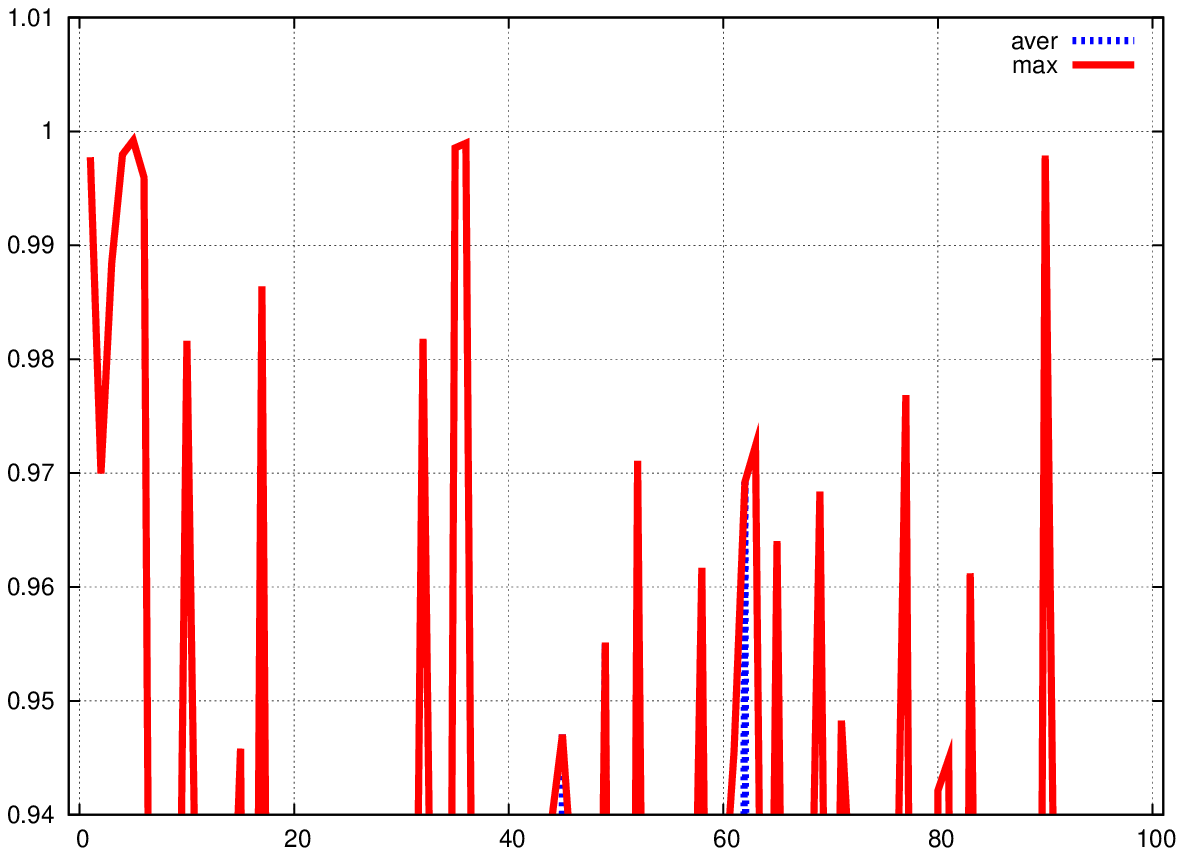}\cr
  \end{tabular}
\caption{A typical evolution of the maximum and the average $A_j$, $j\in I_j^n$ with respect to the time steps $n$ (horizontal axis). In the left graph we present the 
         full range of their evolution, while the right graph is a focused version of the left one where one can more clearly see that the value 1 is an upper bound 
         for the $\max_{j\in I_j^n}A_j$, the same is true for the average $A_j$, $j\in I_j^n$.}\label{Graph.lambda}
\end{figure}

To impose numerically this requirement we check, for every $j\in I_j^n$, whether $A_j\geq 1$, and if so, we correct accordingly the position of the node $x_j^{n+1}$. More specifically, if the local extreme is at the node $x_i^n$ we set
$$x_j^{n+1}:=x_j^{n+1}+\epsilon (x_j^{n+1}-x_i^n)$$
where $0<\epsilon<1$, with a typical value of $\epsilon=0.2$.

Similarly we treat the case where a local extreme is at the node $x_{i+1}^n$. We refer to Figure \ref{Graph.lambda} for a typical graph of the maximum and the average $A_j$, $j\in I_j^n$ with respect to time step $n$.
\paragraph{Connection of the $\lambda$-rule requirement with the $\lambda$-rule effect}
We now explain the connection of the $\lambda$-rule Req.(\ref{l-ruleReq}) with the $\lambda$-rule effect -as described in Rem.(\ref{Rem.l-rule})- when several nodes are located in between two consequent extremes.

For this, it is sufficient to show that an application of the mesh reconstruction (Step 1.) and the solution update (Step 2.) has a diffusive effect on the discrete numerical solution; since then, continuous repetitions of Step 1. and Step 2. -which do not consume any time of the physical evolution of the problem- yield the required $\lambda$-rule effect Rem.(\ref{Rem.l-rule}).

\begin{figure}[t]
  \begin{center}  \input{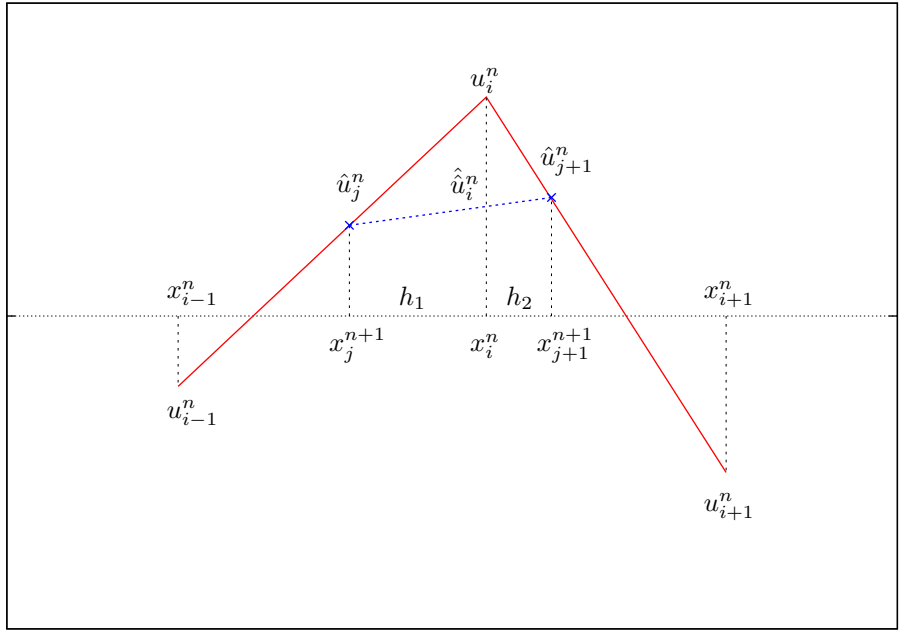}  \end{center}
    \caption{The numerical solution before the mesh reconstruction is $(x_\cdot^n,u_\cdot^n)$ and is depicted in red, after the mesh reconstruction is 
             $(x_\cdot^{n+1},\hat u_\cdot^n)$ and is depicted in blue. The solution is updated by projection of the new nodes on piecewise linears. The projected 
             value of the old node $x_i^n$ on the linear new (blue) linear segment is $\hat{\hat u}_i^n$.
            }\label{Graph.Diffusion}
\end{figure}

We refer to Figure \ref{Graph.Diffusion} and assume that a piecewise linear function attains a local maximum at the node $x_i^n$ with value $u_i^n$. The neighbouring nodes are $x_{i-1}^n$ and $x_{i+1}^n$ with values $u_{i-1}^n$, $u_{i+1}^n$ respectively. The slopes, left and right of the node $x_i^n$ are given by 
$$\lambda_i^-=\frac{u_i^n-u_{i-1}^n}{h_i^-} \quad\mbox{ and }\quad \lambda_i^+=\frac{u_{i+1}^n-u_i^n}{h_i^+},$$ 
where $h_i^-=x_i^n-x_{i-1}^n$ and $h_i^+=x_{i+1}^n-x_i^n$.

Next we assume that there is a smooth function $u$ that interpolates the points $(x_{i-1}^n,u_{i-1}^n)$, $(x_i^n,u_i^n)$ and $(x_{i+1}^n,u_{i+1}^n)$, and we compute 
\begin{align}
  \frac{\lambda_i^+-\lambda_i^-}{h_i^-+h_i^+}
       &=\frac{\frac{u_{i+1}^n-u_i^n}{h_i^+}-\frac{u_i^n-u_{i-1}^n}{h_i^-}}{h_i^-+h_i^+}
        =\frac{\frac{u(x_{i+1}^n)-u(x_i^n)}{h_i^+}-\frac{u(x_i^n)-u(x_{i-1}^n)}{h_i^-}}{h_i^-+h_i^+}\nonumber\\
       &=\frac{u'(x_i^n)+\frac{h_i^+}{2}u''(x_i^n)+\mathcal O((h_-^+)^2)-u'(x_i^n)'+\frac{h_i^-}{2}u''(x_i^n)+\mathcal O((h_i^-)^2)}{h_i^-+h_i^+}\nonumber \\
       &=\frac{1}{2}u''(x_i^n)+\frac{O((h_i^-)^2)+O((h_i^+)^2)}{h_i^-+h_i^+}\label{DiscrSecondDrv}
\end{align}
   
We perform a mesh reconstruction and a solution update step which yield the new mesh $M_x^{n+1}$ and the update solution $\hat U^n$. We define
$$h_1=x_i^n-x_{j}^{n+1},\quad h_2=x_{j+1}^{n+1}-x_i^n$$
and note that due to the linearity and the interpolation the following is valid
\begin{align}
    \frac{u_i^n-\hat u_j^n}{h_1}=\lambda_{i-1}^n&\Rightarrow \hat u_j^n=u_i^n-\lambda_{i-1}^n h_1\nonumber\\
    \frac{\hat u_{j+1}^n-u_i^n}{h_2}=\lambda_i^n&\Rightarrow \hat u_{j+1}^n=u_i^n+\lambda_i^n h_2\nonumber
\end{align}
We interpolate linearly over the points $(x_j^{n+1},\hat u_j^n)$, $(x_{j+1}^{n+1},\hat u_{j+1}^n)$ and compute the new value $\hat {\hat u}_i^n$ of the previous node $x_i^n$ (cf Figure (\ref{Graph.Diffusion})). To do so, we note that $x_i^n\in(x_j^{n+1},x_{j+1}^{n+1})$ and is given by the following convex combination $$x_i^n=\frac{h_2}{h_1+h_2} x_j^{n+1}+\frac{h_1}{h_1+h_2} x_{j+1}^{n+1},$$
and so the projected value $\hat {\hat u}_i^n$ of the node $x_i^n$ over the new linear segment is given by the same convex combination with respect to $\hat u_j^n$ and  $\hat u_{j+1}^n$,
\begin{align}
  \hat {\hat u}_i^n&=\frac{h_2}{h_1+h_2}\hat u_j^n+\frac{h_1}{h_1+h_2}\hat u_{j+1}^n\nonumber\\
                   &=\frac{h_2}{h_1+h_2}(u_i^n-\lambda_{i-1}^nh_1)+\frac{h_1}{h_1+h_2}(u_i^n+\lambda_i^nh_2)\nonumber\\
                   &=u_i^n+\frac{h_1h_2}{h_1+h_2}(\lambda_i^n-\lambda_{i-1}^n)\label{DiscreteRelStar}\\
                   &=u_i^n+\frac{C}{2}u''(x_i^n)+\frac{h_1h_2}{h_1+h_2}\left(O((h_i^-)^2)+O((h_i^+)^2)\right)\quad 
                       \mbox{ (using Rel.(\ref{DiscrSecondDrv}))}\label{Diffusion.coeff}
\end{align}
where $C=\frac{h_1h_2}{h_1+h_2}(h_i^-+h_i^+)\geq 0$. 
\begin{remark}
  Rel. (\ref{Diffusion.coeff}) reveals the diffusive property of the mesh reconstruction (Step 1.) and the solution update (Step 2.). We moreover note that in 
  the case of a uniform mesh either $h_1=0$ or $h_2=0$, which both yield $C=0$.
\end{remark}
As explained earlier, this remark is useful only if there are several nodes in between two consequent extremes. In this case, continuous repetitions of the mesh reconstruction and the solution update  guarantee that the extremes will satisfy the $\lambda$-rule effect. We note though that in practise, applying the mesh reconstruction procedure and the solution update once in every time step is sufficient.

\section{Numerical tests}\label{Section.Numerics}
As stated at the introduction the numerical schemes that we shall discuss are oscillatory, either due to their dispersive or to their anti-diffusive nature. Here we shall only state their description for non-uniform meshes and prove that they satisfy the Evolution Requirement Req.(\ref{EvolutionReq}), which we restate
\begin{requirement*}[Evolution requirement]
  There exists a constant $C>0$ independent of the time step $n$ and the node $i$ such that,
  \begin{equation}
    |u_i^{n+1}-\hat u_i^n|\leq C\max\left\{|\hat u_{i+1}^n-\hat u_i^n|,|\hat u_i^n-\hat u_{i-1}^n|\right\}
  \end{equation}
\end{requirement*}

The problems that we shall deal with, are the Transport Equation
$$u_t+u_x=0,\quad x\in[0,1],\ t\in[0,1]$$
and the inviscid Burgers Equation 
$$u_t+\left(\frac{u^2}{2}\right)_x=0,\quad x\in[0,1],\ t\in[0,1]$$
both with jump initial conditions
$$u_0(x)=\mathcal X_{[0,1/2]}(x),\quad x\in[0,1]$$ 

\subsection{Richtmyer 2-step Lax-Wendroff}
  In this approach we consider the non-uniform cell centered discretization of the domain in cells 
  $$C_i=(x_{i-1/2}^n,x_{i+1/2}^n)\quad\mbox{ with }\quad |C_i^n|=h_i^n$$
  The mesh $M_x^n=\{x_i^n, i\in \Z\}$ consists of the middle points, 
  $$x_i^n=\frac{x_{i+1/2}^n+x_{i-1/2}^n}{2}\quad \mbox{ hence }\quad x_i^n-x_{i-1}^n=\frac{h_i^n+h_{i-1}^n}{2}$$
  For this description of the grid we propose the following numerical scheme as the generalisation on non-uniform meshes of the Richtmyer 2-step Lax-Wendorff numerical 
  scheme,

  \begin{align}
    u^\ast_{i+1/2}&=\frac{h_{i+1}^{n+1}}{h_i^{n+1}+h_{i+1}^{n+1}}\hat u_i^n+\frac{h_{i}^{n+1}}{h_i^{n+1}+h_{i+1}^{n+1}}\hat u_{i+1}^n
                   -\frac{\Delta t}{h_i^{n+1}+h_{i+1}^{n+1}}(f(\hat u_{i+1}^n)-f(\hat u_i^n))\nonumber \\
    u_i^{n+1}&=u_i^n-\frac{\Delta t}{h_i}\big(f(u_{i+1/2}^\ast)-f(\hat u_{i-1/2}^\ast)\big)\nonumber
  \end{align}
  or 
  $$u_i^{n+1}=\hat u_i^n-\frac{\Delta t}{h_i^{n+1}}\big(F_{i+1/2}-F_{i-1/2}\big)$$
  with 
  $$F_{i+1/2}=f(u_{i+1/2}^\ast)=f\Big(\frac{h_{i+1}^{n+1}}{h_i^{n+1}+h_{i+1}^{n+1}}\hat u_i^n+\frac{h_{i}^{n+1}}{h_i^{n+1}+h_{i+1}^{n+1}}\hat u_{i+1}^n
                               -\frac{\Delta t}{h_i^{n+1}+h_{i+1}^{n+1}}(f(\hat u_{i+1}^n)-f(\hat u_i^n))\Big)$$ 
  \begin{remark}
    A straight forward computation reveals that this scheme reduces to the usual Richtmyer 2-step Lax-Wendroff scheme when then mesh is uniform i.e $h_i^n=h$ for 
    every $i,n$.
  \end{remark}
  We need to bound the difference
  \begin{align}
    |u_i^{n+1}-u_i^n| 
        &\leq \frac{\Delta t}{h_i}\left |f(u_{i+1/2}^\ast)-f(u_{i-1/2}^\ast) \right|\nonumber\\
        &\leq \frac{\Delta t}{h_i}\max |f'| |u_{i+1/2}^\ast-u_{i-1/2}^\ast|\nonumber\\
        &\leq CFL |u_{i+1/2}^\ast-u_{i-1/2}^\ast|\nonumber
  \end{align}
  and now the difference
  \begin{align}
      |u_{i+1/2}^\ast-u_{i-1/2}^\ast|
        &=\bigg| \frac{h_{i+1}^{n+1}}{h_i^{n+1}+h_{i+1}^{n+1}}\hat u_i^n+\frac{h_i^{n+1}}{h_i^{n+1}+h_{i+1}^{n+1}}\hat u_{i+1}^n
                -\frac{\Delta t}{h_i^{n+1}+h_{i+1}^{n+1}}\left(f(\hat u_{i+1}^n)-f(\hat u_i^n)\right) \nonumber\\
        &\quad -\frac{h_i^{n+1}}{h_{i-1}^{n+1}+h_i^{n+1}}\hat u_{i-1}^n -\frac{h_{i-1}^{n+1}}{h_{i-1}^{n+1}+h_i^{n+1}}\hat u_i^n
               +\frac{\Delta t}{h_{i-1}^{n+1}+h_i^{n+1}}\left(f(\hat u_i^n)-f(\hat u_{i-1}^n)\right)\bigg|\nonumber
  \end{align}
  \begin{figure}[t]
  \begin{tabular}{cc}
    \includegraphics[width=7cm]{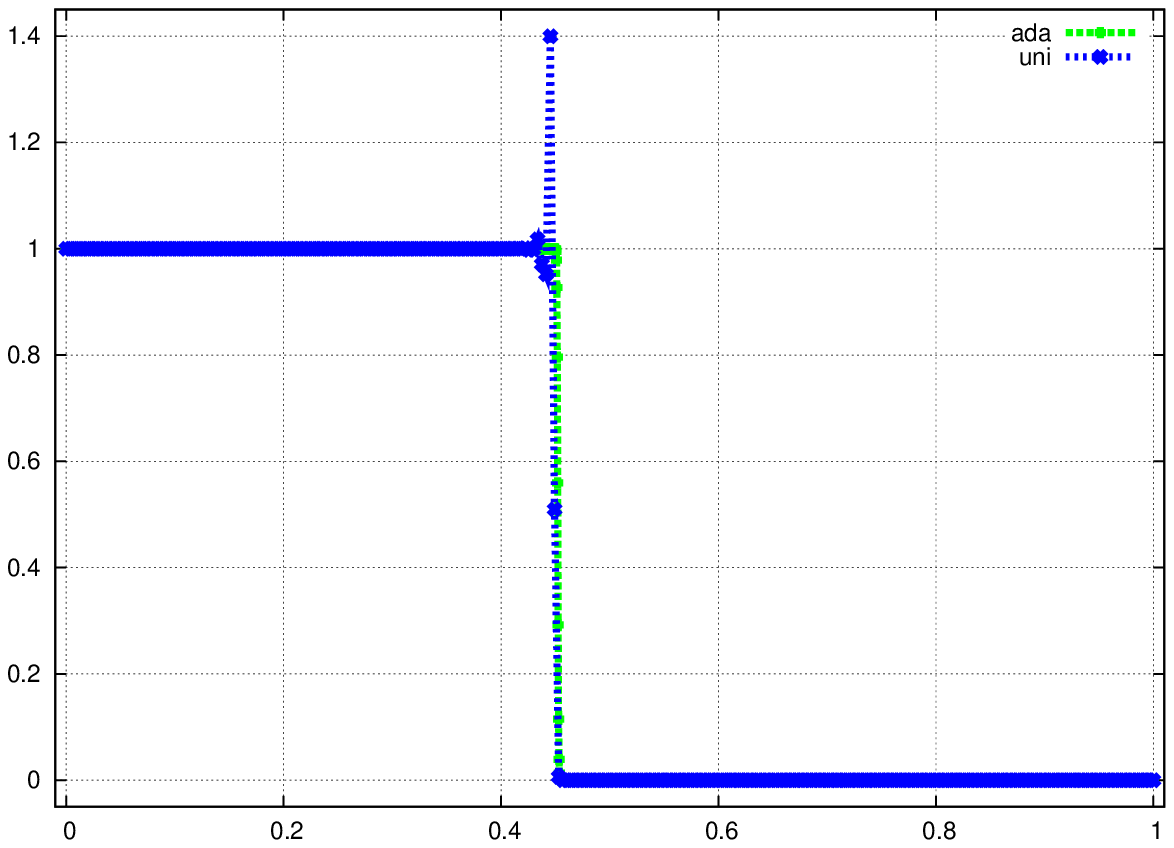}&  \includegraphics[width=7cm]{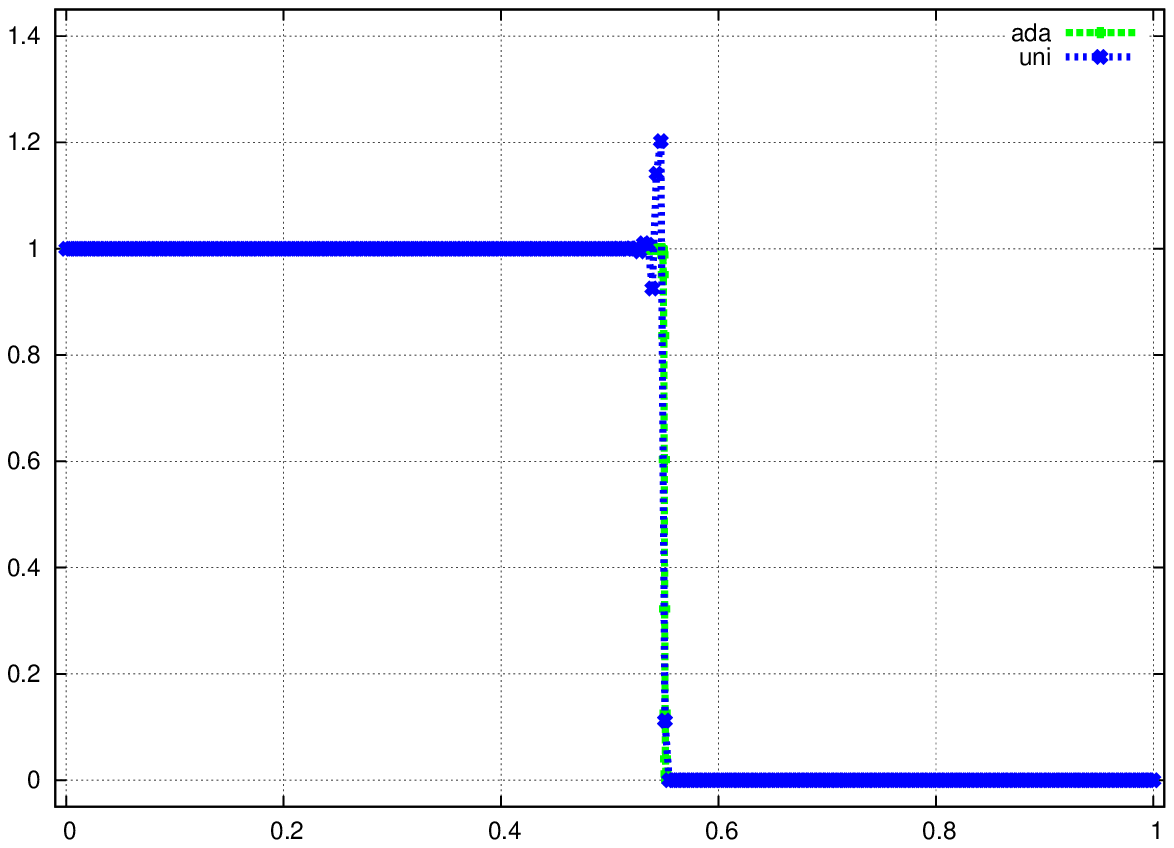}\cr
    \includegraphics[width=7cm]{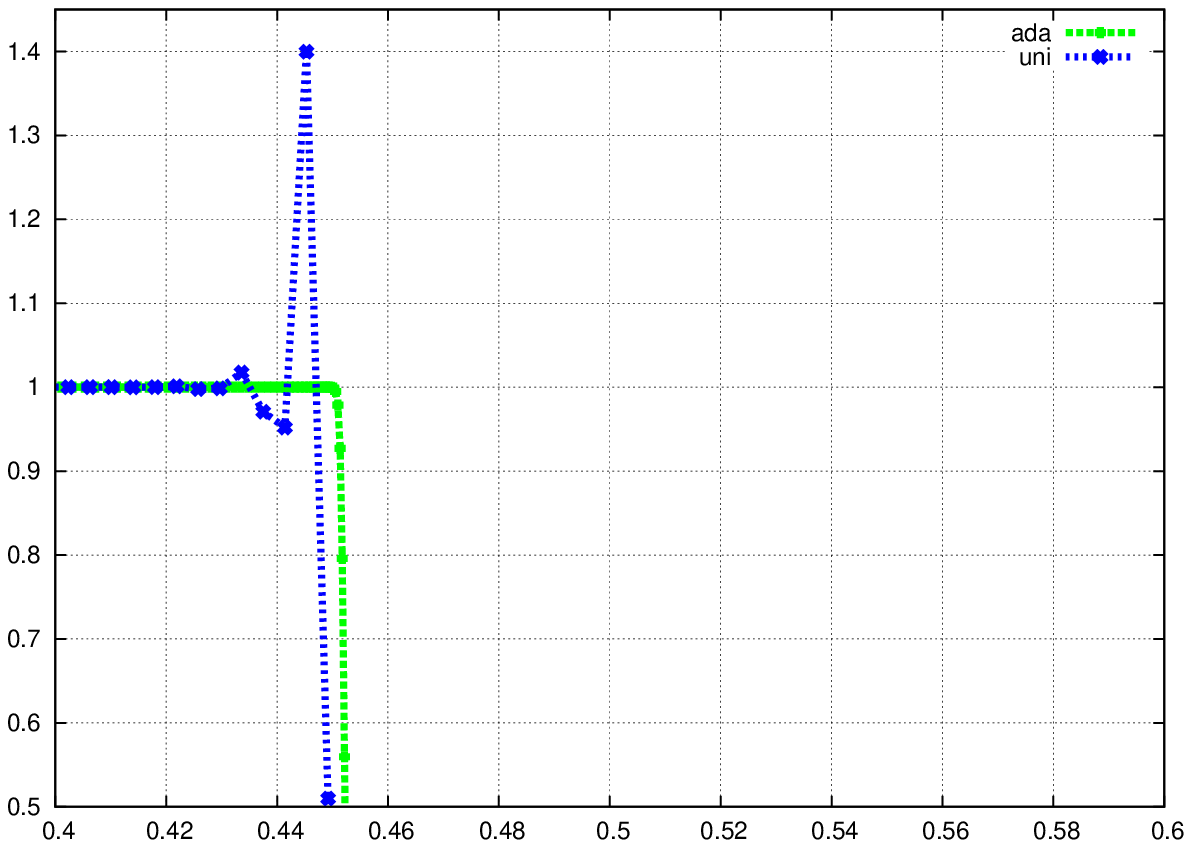}& \includegraphics[width=7cm]{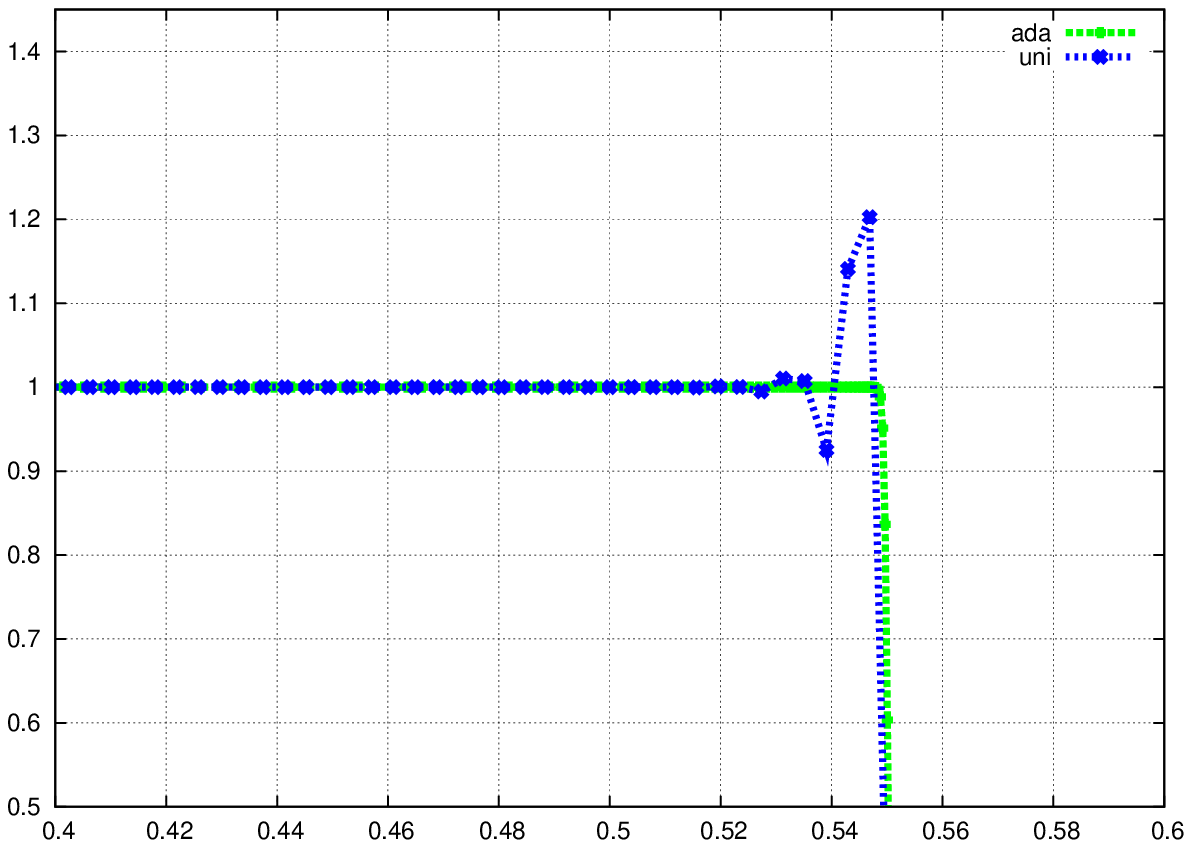}\cr
  \end{tabular}
  \caption{In this graph are presented two time instances of the numerical solutions of the inviscid Burgers equation using the Richtmyer 2-step Lax Wendroff over  
           uniform and non-uniform meshes. In the first row of graphs the full domain is presented, while in the second row the graphs are focused on the top of the 
           shock. The uniform mesh case -depicted in blue- exhibits oscillations due to the dispersive nature of the numerical scheme where as the non-uniform mesh 
           case -depicted in green- is clean.
          }\label{Graph.LxW.Burgers}
  \end{figure}
    by setting $\frac{h_{i+1}^{n+1}}{h_i^{n+1}+h_{i+1}^{n+1}}=\mu_1$ and $\frac{h_i^{n+1}}{h_{i-1}^{n+1}+h_i^{n+1}}=\mu_2$ the previous bound recasts into:
    \begin{align}
      |u_{i+1/2}^\ast-u_{i-1/2}^\ast|
        &=\bigg| \mu_1\hat u_i^n+(1-\mu_1)\hat u_{i+1}^n-\frac{\Delta t}{h_i^{n+1}+h_{i+1}^{n+1}}\left(f(\hat u_{i+1}^n)-f(\hat u_i^n)\right) \nonumber\\
        &\qquad -\mu_2\hat u_{i-1}^n-(1-\mu_2)\hat u_i^n+\frac{\Delta t}{h_{i-1}^{n+1}+h_i^{n+1}}\left(f(\hat u_i^n)-f(\hat u_{i-1}^n)\right)\bigg|\nonumber\\
        &\leq \mu_1|\hat u_i^n-\hat u_{i+1}^n|+\mu_2|\hat u_i^n-\hat u_{i-1}^n|+|\hat u_{i+1}^n-\hat u_i^n|\nonumber\\
        &\qquad +\frac{\Delta t}{2\min h_i^{n+1}}\max|f'||\hat u_{i+1}^n-\hat u_i^n|+\frac{\Delta t}{2\min h_i^{n+1}}\max|f'||\hat u_i^n-\hat u_{i-1}^n|\nonumber\\
        &\leq (1+\mu_1+\mu_2+CFL)\max\{|\hat u_{i-1}^n-\hat u_i^n|,|\hat u_i^n-\hat u_{i+1}^n|\}\nonumber\\
        &\leq (3+CFL)\max\{|\hat u_{i-1}^n-\hat u_i^n|,|\hat u_i^n-\hat u_{i+1}^n|\}\nonumber
    \end{align}
    \begin{figure}[t]
    \begin{tabular}{cc}
      \includegraphics[width=7cm]{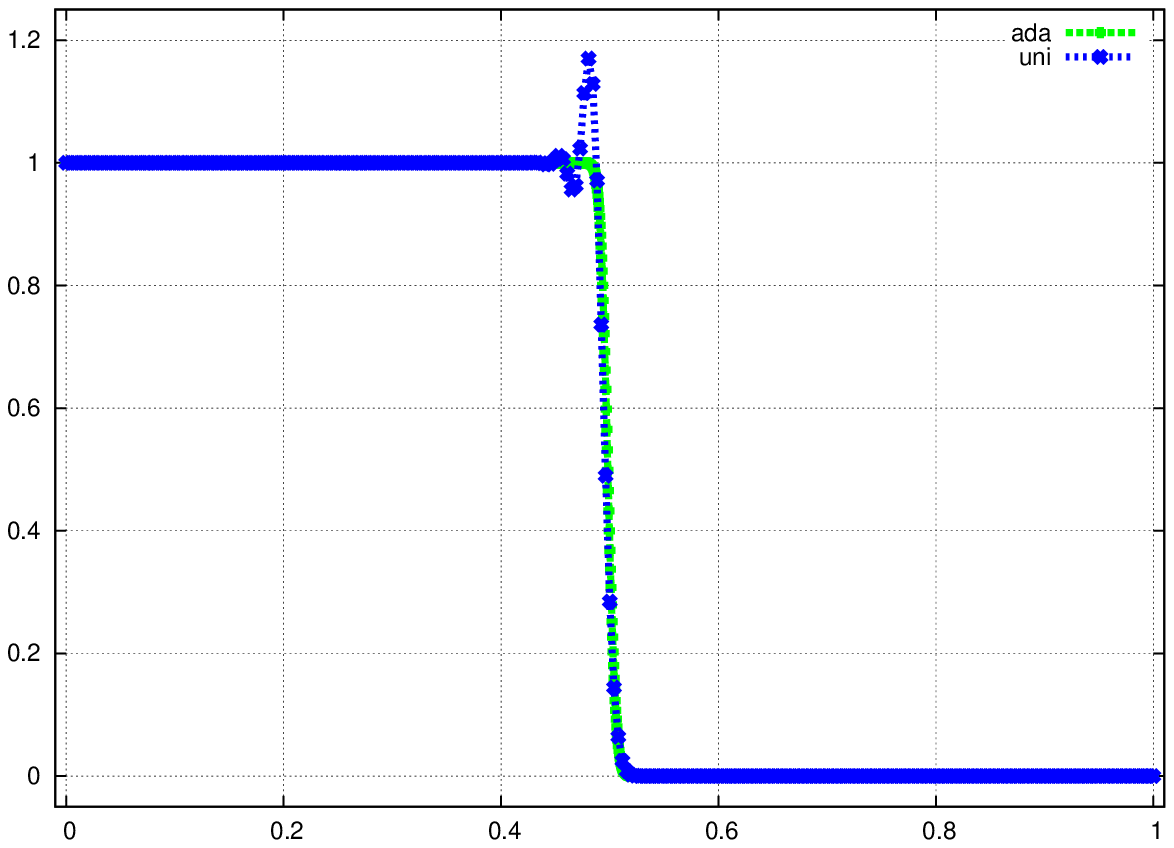}& \includegraphics[width=7cm]{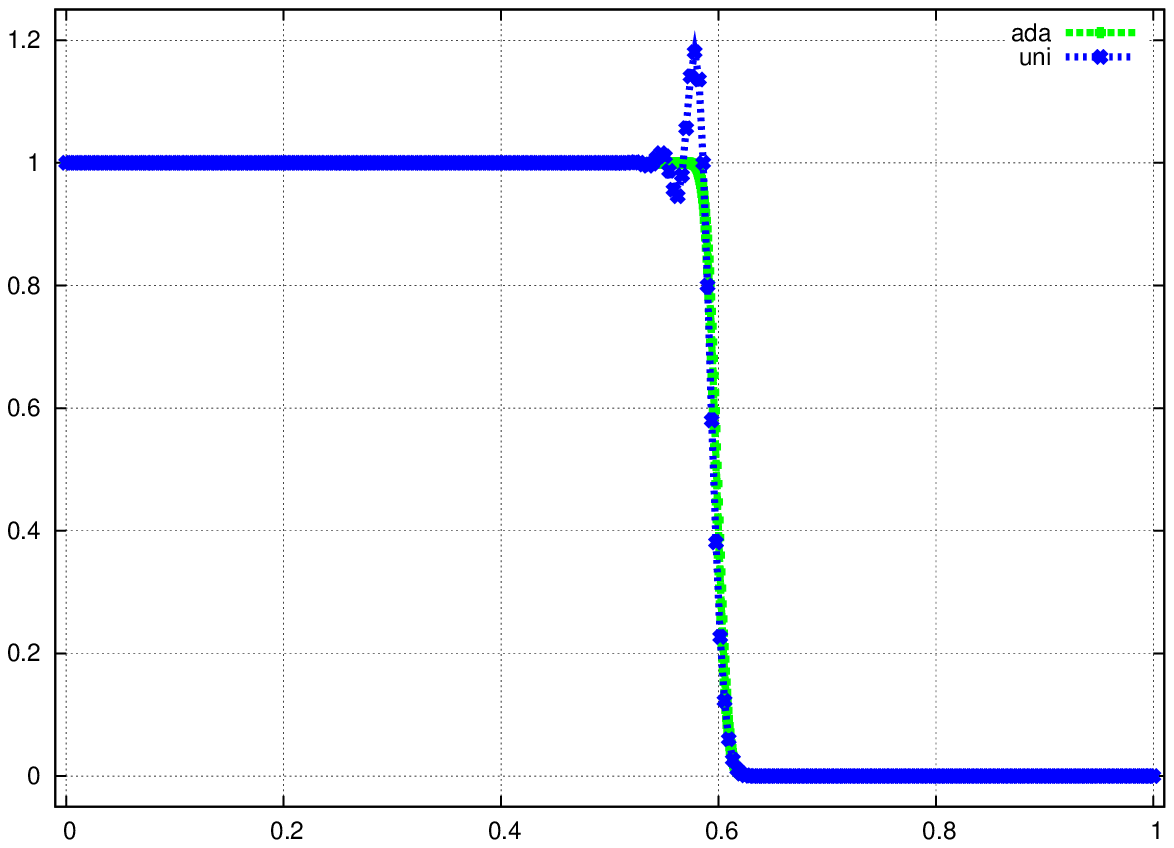}\cr
      \includegraphics[width=7cm]{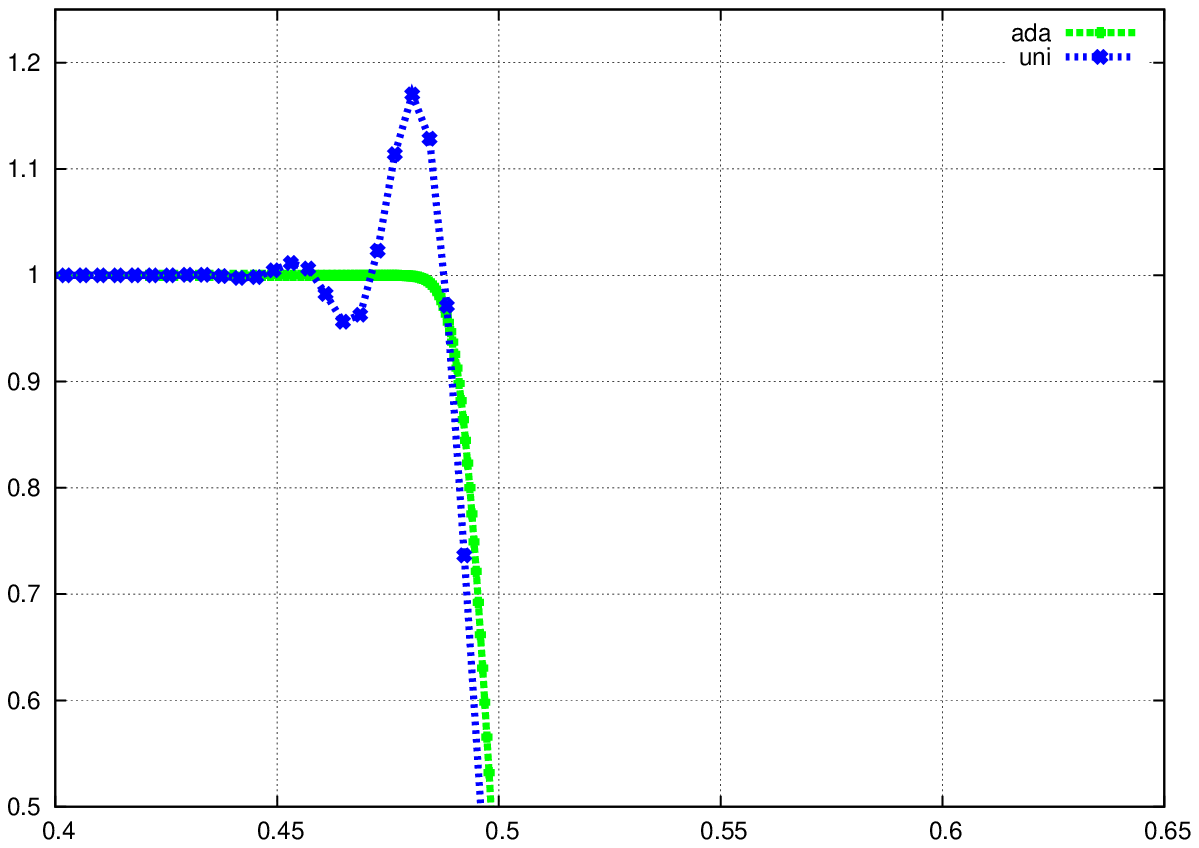}& \includegraphics[width=7cm]{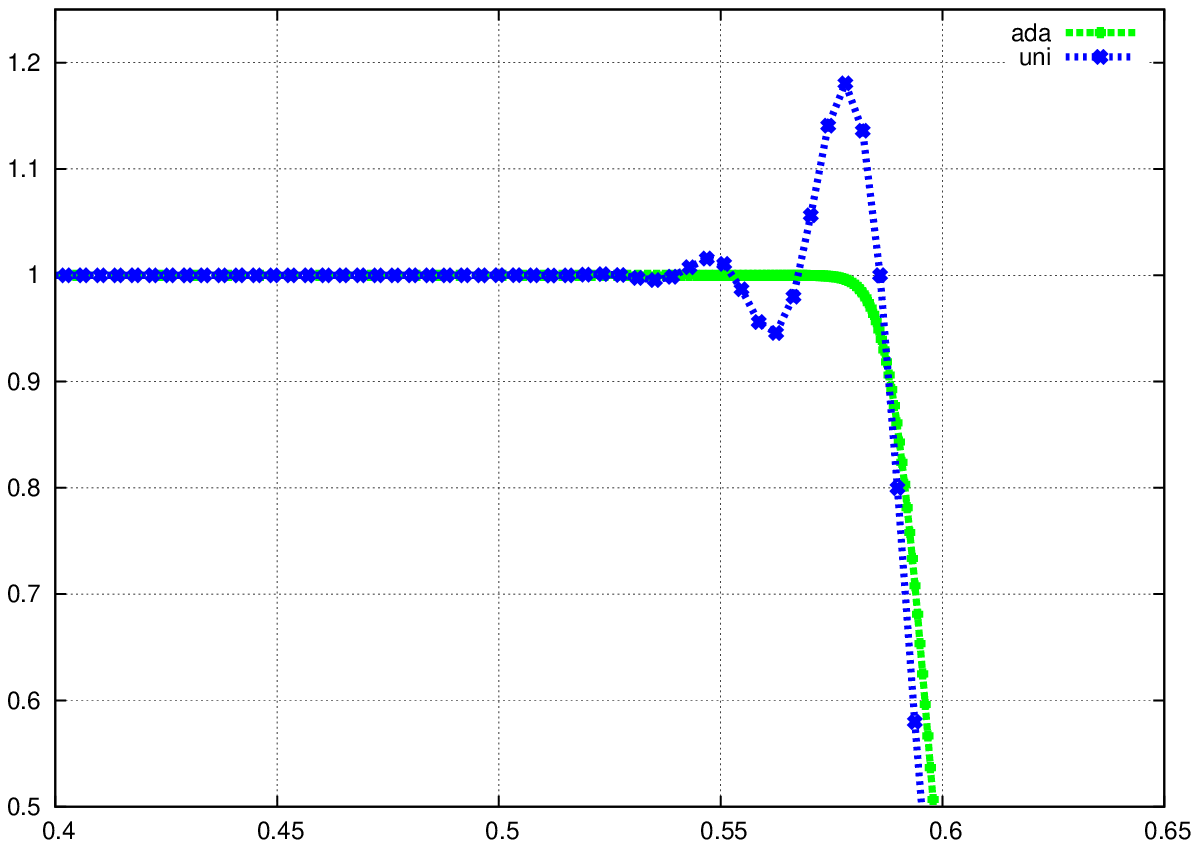}\cr
    \end{tabular}
    \caption{Transport equation with velocity $a=1$, using the Richtmyer 2-step Lax Wendroff. Again, oscillations appear in the uniform mesh case, whereas the  
             non-uniform one is clean.}\label{Graph.LxW.Transport}
    \end{figure}

    where the last inequality is valid since $0<\mu_1,\mu_2\leq 1$. So the overall bound reads,
    \begin{align}
      |u_i^{n+1}-\hat u_i^n|\leq CFL(3+CFL)\max\left\{|\hat u_{i+1}^n-\hat u_i^n|,|\hat u_i^n-\hat u_{i-1}^n|\right\}\nonumber
    \end{align}
    The constant $C$ in this case is chosen to be $C=CFL(3+CFL)$, for this choice the evolution requirement is satisfied. We refer to Figures \ref{Graph.LxW.Burgers} 
    and \ref{Graph.LxW.Transport} for comparative graphs between the uniform and non-uniform mesh case for the Richtmyer 2-step Lax-Wendroff scheme. 
\subsection{MacCormack}
  In this approach we consider the non-uniform cell centered discretization of the domain in cells 
  $$C_i^n=(x_{i-1/2}^n,x_{i+1/2}^n)\quad\mbox{ with }\quad |C_i^n|=h_i^n$$
  The mesh $M_x^n=\{x_i^n, i\in \Z\}$ consists of the middle points, 
  $$x_i^n=\frac{x_{i+1/2}^n+x_{i-1/2}^n}{2}\quad \mbox{ hence }\quad x_i^n-x_{i-1}^n=\frac{h_i^n+h_{i-1}^n}{2}$$
  For this description of the grid we propose the following scheme as the generalisation of the MacCormack,
  \begin{align}
           u_i^\ast&=\hat u_i^n-\frac{2\Delta t}{h_i^{n+1}+h_{i+1}^{n+1}}(f(\hat u_{i+1}^n)-f(\hat u_i^n))\nonumber\\
     u_i^{\ast\ast}&=u_i^\ast-\frac{2\Delta t}{h_{i-1}+h_i}(f(u_i^\ast)-f(u_{i-1}^\ast))\nonumber\\
          u_i^{n+1}&=\frac{\hat u_i^n+u_i^{\ast\ast}}{2}\nonumber
  \end{align}
  We first rewrite the scheme in the following form, for $f_i^\ast=f(u_i^\ast)$ and $f_i=f(\hat u_i^n)$
  \begin{align}
    u_i^{n+1}&=\frac{\hat u_i^n}{2}+\frac{u_i^\ast-\frac{2 \Delta t}{h_{i-1}+h_i}(f_i^\ast-f_{i-1}^\ast)}{2}\nonumber\\
             &=\frac{\hat u_i^n}{2}+\frac{\hat u_i^n-\frac{2 \Delta t}{h_i^{n+1}+h_{i+1}^{n+1}}(f_{i+1}-f_i)
                                                    -\frac{2 \Delta t}{h_{i-1}^{n+1}+h_i^{n+1}}(f_i^\ast-f_{i-1}^\ast)}{2}\nonumber\\
             &=\hat u_i^n-\frac{\Delta t}{h_i^{n+1}+h_{i+1}^{n+1}}(f_{i+1}-f_i)-\frac{\Delta t}{h_{i-1}^{n+1}+h_i^{n+1}}(f_i^\ast-f_{i-1}^\ast)\nonumber
  \end{align}
  \begin{figure}[t]
    \begin{tabular}{cc}
      \includegraphics[width=7cm]{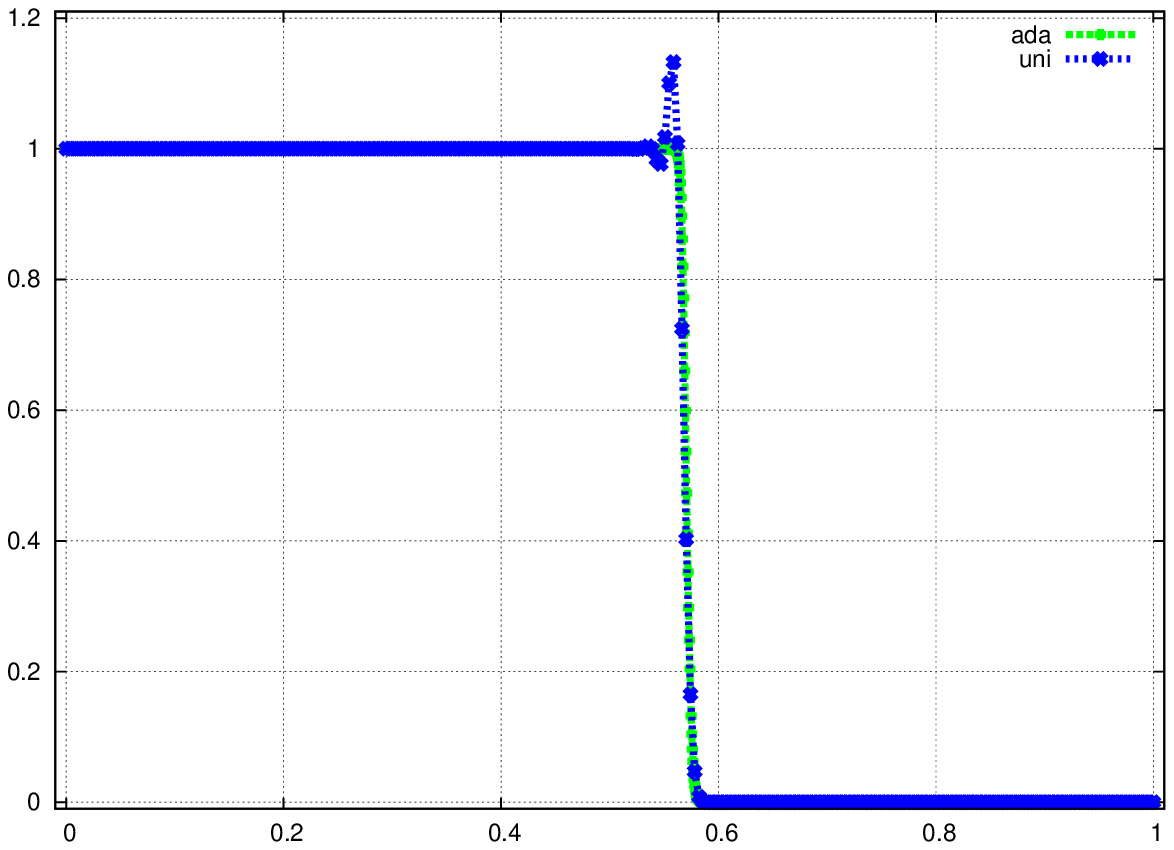}&  \includegraphics[width=7cm]{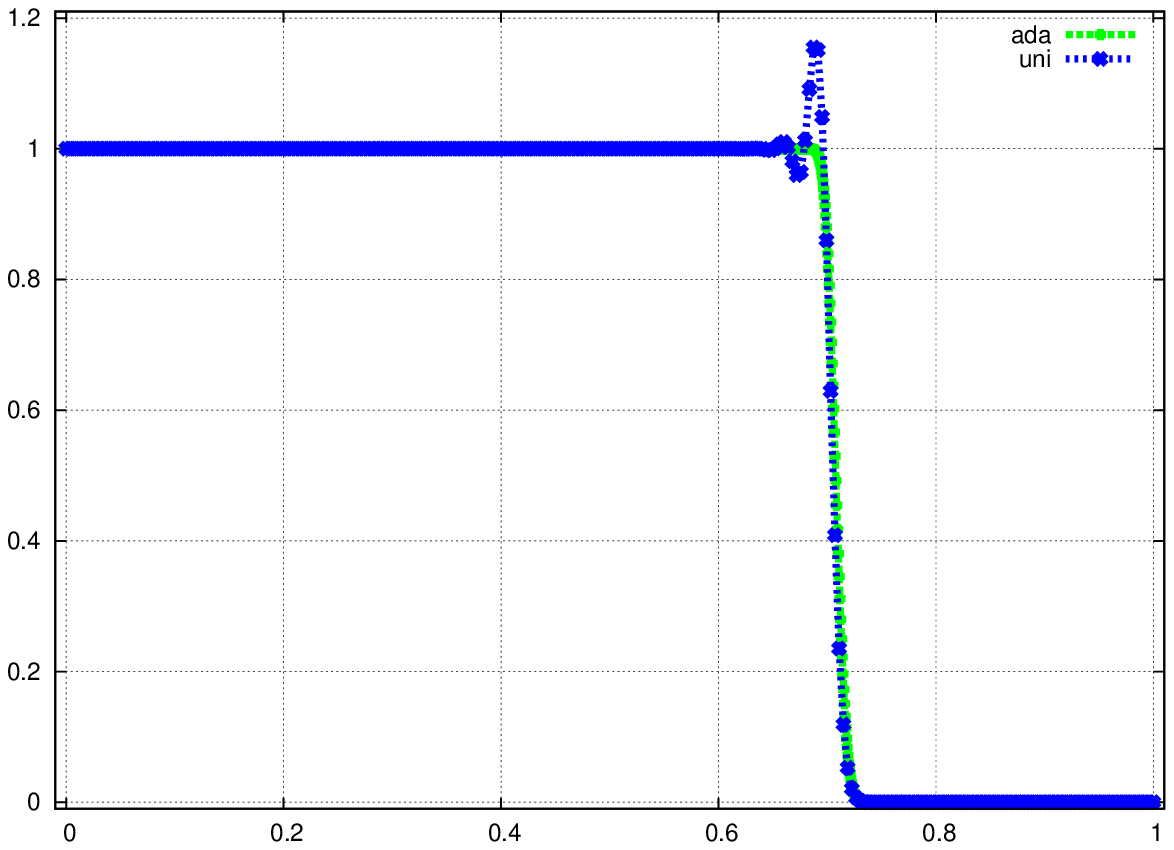}\cr
      \includegraphics[width=7cm]{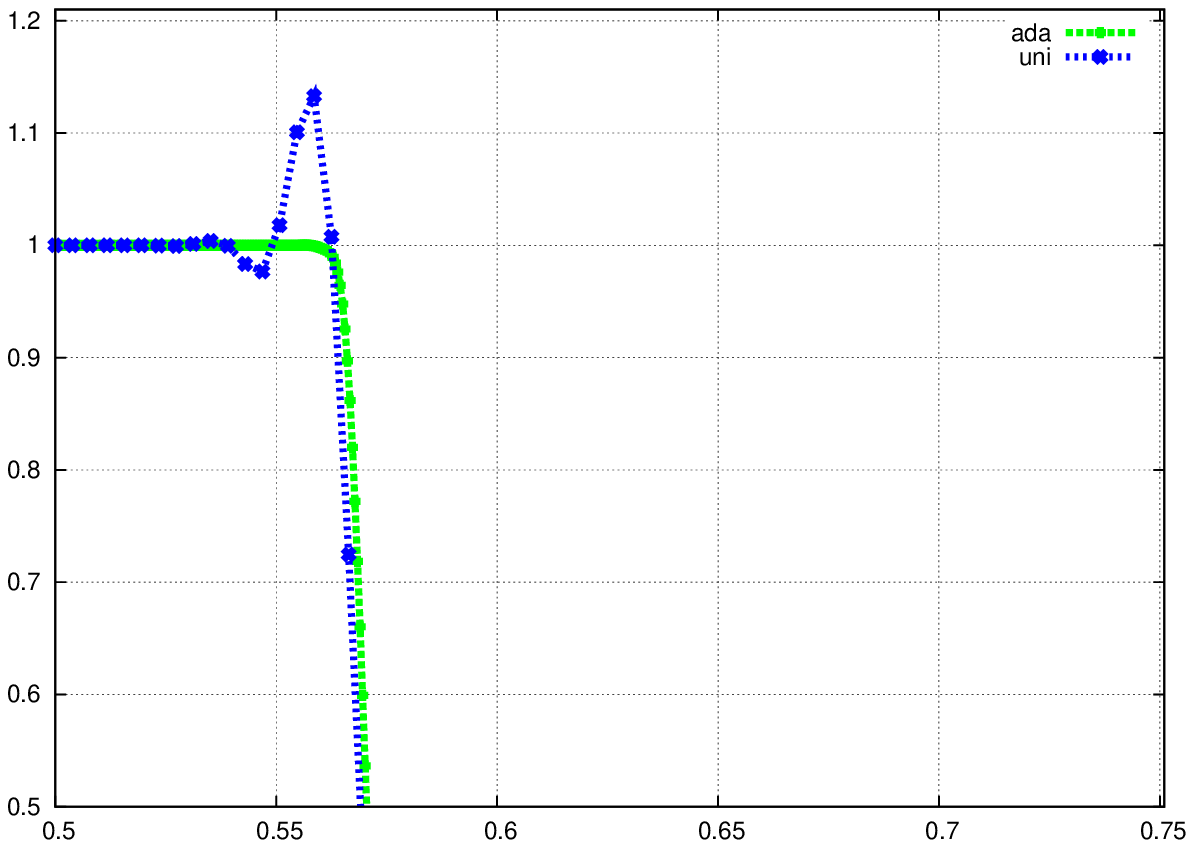}&
                                  \includegraphics[width=7cm]{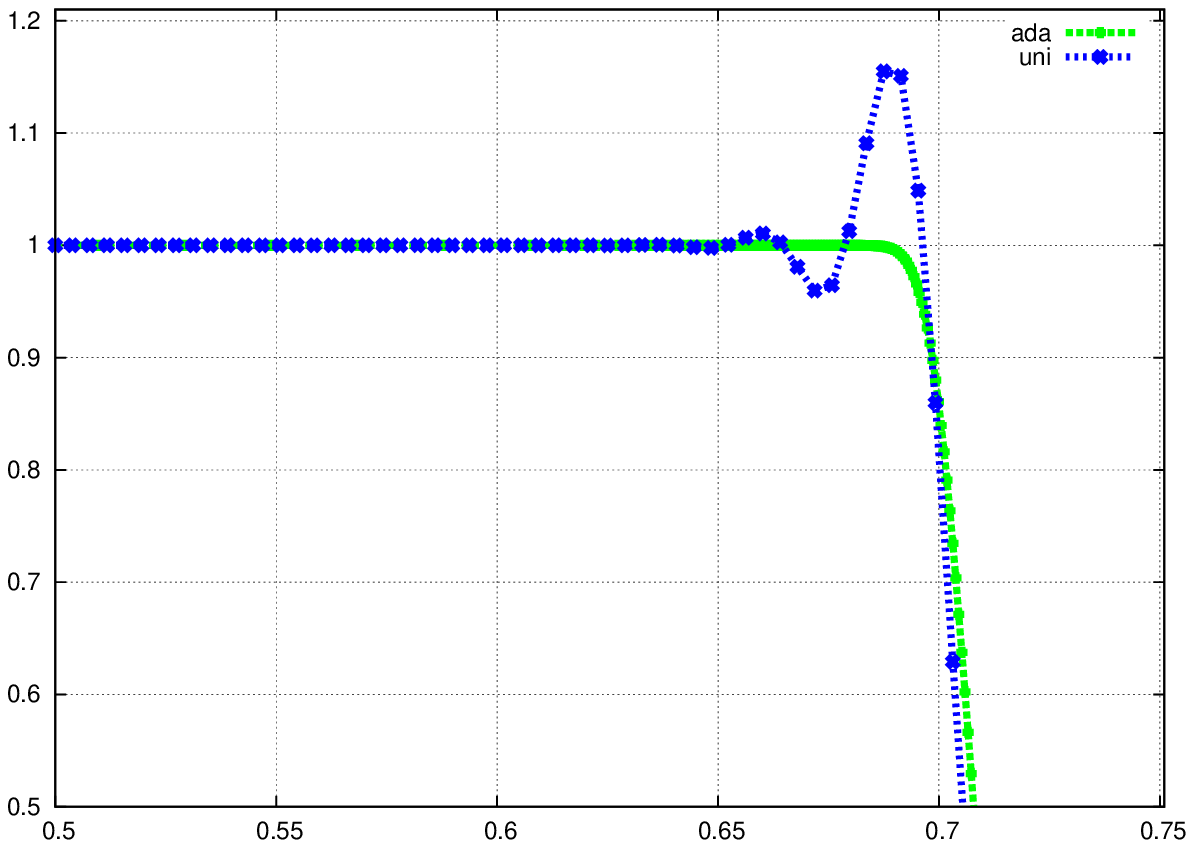}\cr
    \end{tabular}
    \caption{Transport equation with velocity $a=1$, using the MacCormack scheme over uniform and non-uniform meshes. Again oscillations are apparent in the uniform 
             mesh case -due to the dispersive nature of the scheme- whereas the non-uniform is clean.}\label{Graph.Mac.Transport}
  \end{figure}
  So, to prove the Evolution Requirement  for this scheme we need to bound
  $$|u_i^{n+1}-\hat u_i^n|=|-\frac{\Delta t}{h_i^{n+1}+h_{i+1}^{n+1}}(f_{i+1}-f_i)-\frac{\Delta t}{h_{i-1}^{n+1}+h_i^{n+1}}(f_i^\ast-f_{i-1}^\ast)|$$
  which reads,
  \begin{align}
    |u_i^{n+1}-\hat u_i^n|
      &\leq \frac{CFL}{2}\left(|\hat u_{i+1}^n-\hat u_i^n|+|u_i^\ast-u_{i-1}^\ast|\right)\nonumber \\
      &\leq \frac{CFL}{2}\left(|\hat u_{i+1}^n-\hat u_i^n|+|\hat u_i^n-\hat u_{i-1}^n|+CFL|\hat u_i^n-\hat u_{i-1}^n|+CFL|\hat u_{i+1}^n-\hat u_i^n|\right)\nonumber\\
      &\leq CFL(1+CFL)\max\left\{|\hat u_{i+1}^n-\hat u_i^n|,|\hat u_i^n-\hat u_{i-1}^n|\right\}\nonumber
  \end{align}
  So, the constant $C$ in this case is chosen to be $C=CFL(1+CFL)$ and for this choice the Evolution Requirement is satisfied. We refer to Figure 
  \ref{Graph.Mac.Transport} for a comparison graph between the uniform and non-uniform mesh case for the MacCormack scheme.
\subsection{Unstable Centered - FTCS}
  This scheme produces oscillations due to its anti-diffusive nature, which property is also responsible for the instability of the scheme. 

  In this approach we consider the non-uniform mesh 
  $$M_x^n=\{x_i^n, i\in \Z\}\quad\mbox{ with } h_i^n=x_i^n-x_{i-1}^n$$ 
  The middle points $x_{i-1/2}^n=\frac{x_{i-1}^n+x_i^n}{2}$ define a partition of the domain in cells, 
  $$C_i^n=(x_{i-1/2}^n,x_{i+1/2}^n)\quad\mbox{ with }\quad |C_i^n|=\frac{h_i^n+h_{i+1}^n}{2}$$
  \begin{figure}[t]
    \begin{tabular}{cc}
      \includegraphics[width=7cm]{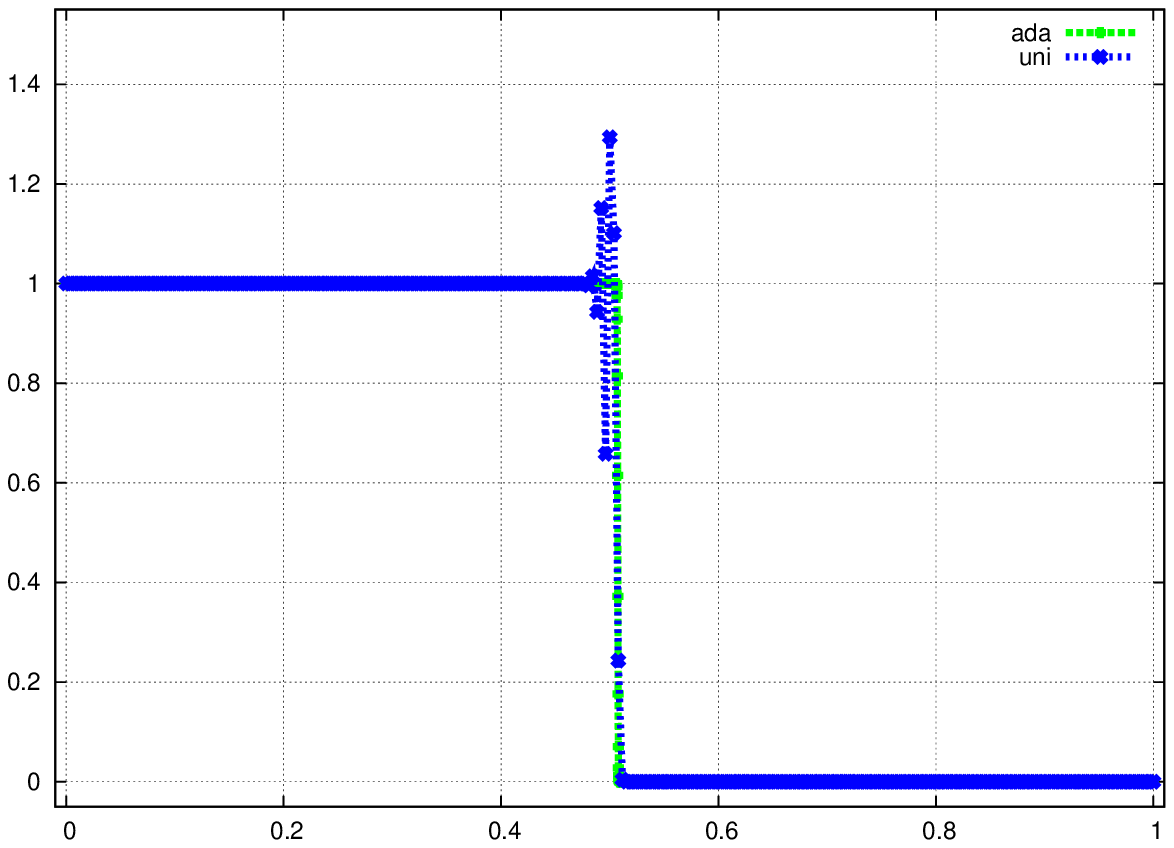}&  \includegraphics[width=7cm]{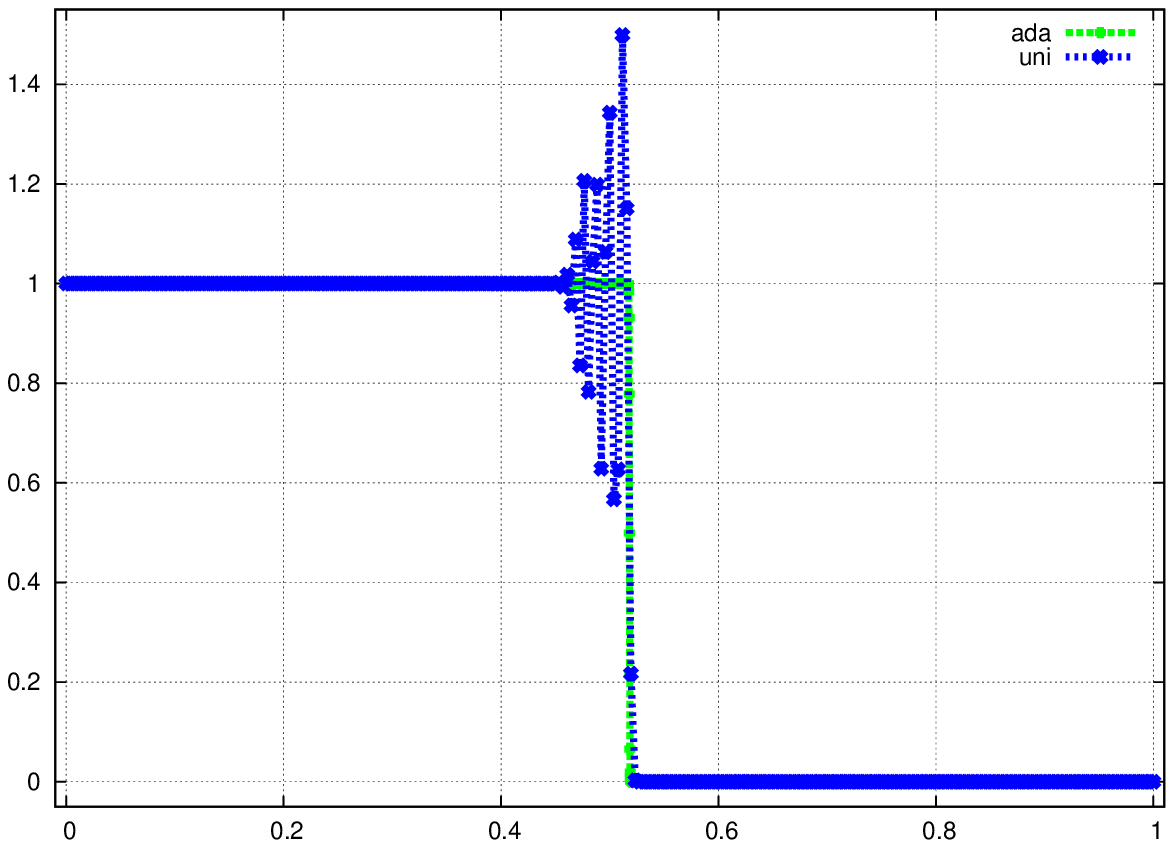}\cr
      \includegraphics[width=7cm]{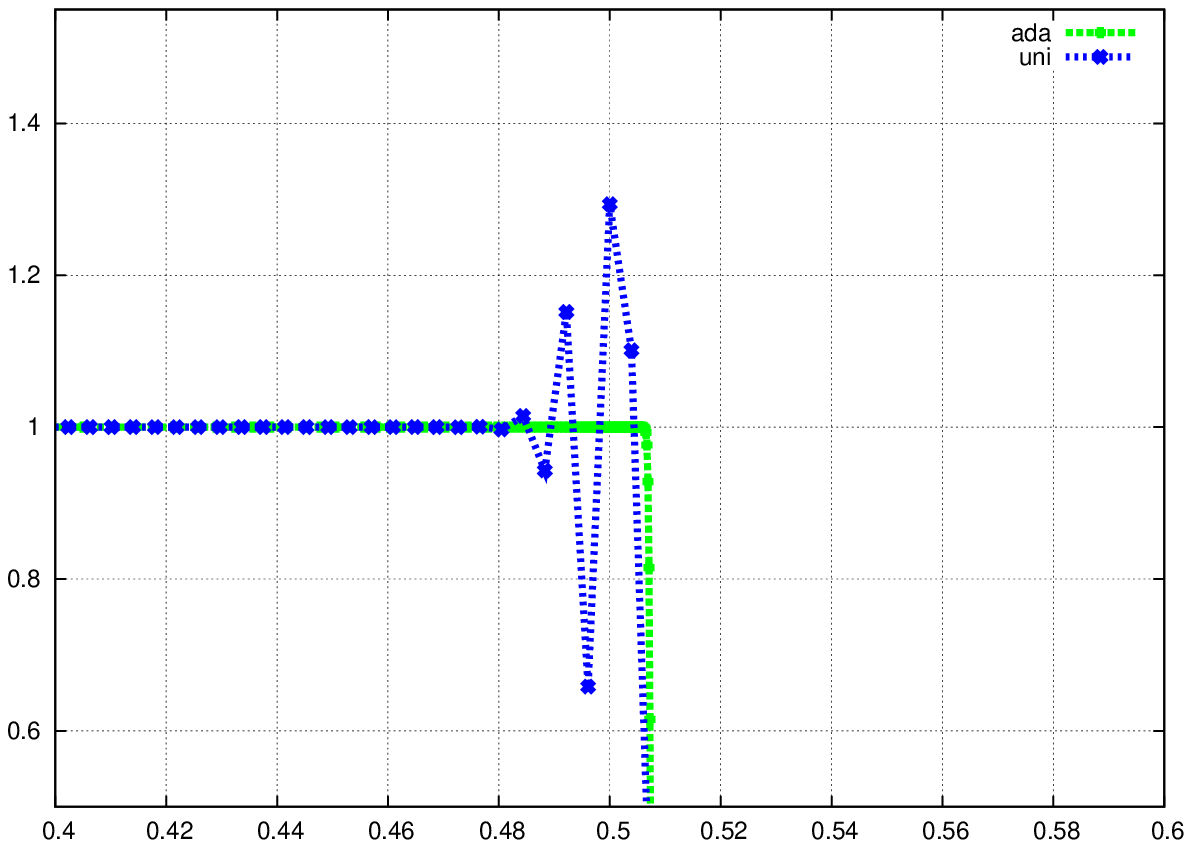}& \includegraphics[width=7cm]{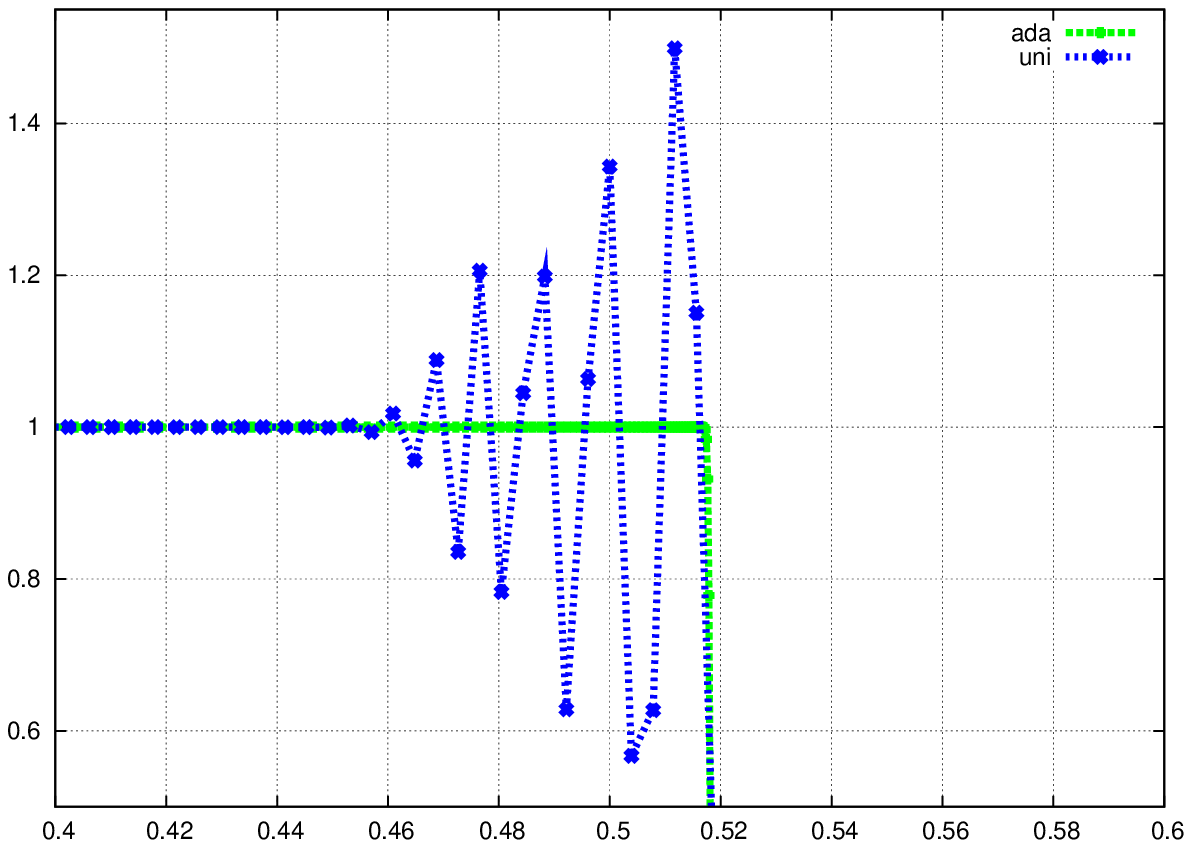}\cr
    \end{tabular}
    \caption{Inviscid Burgers equation, using the unstable FTCS. The oscillations in the uniform case are due to the anti-diffusive nature of the numerical scheme. The 
             non-uniform mesh case is clean.}\label{Graph.FTCS.Burgers}
  \end{figure}

  For this description of the grid we discuss the known to be \emph{unstable} Forward in Time Centered in Space (FTCS) scheme
  $$u_i^{n+1}=\hat u_i^n-\frac{\Delta t}{h_i^{n+1}+h_{i+1}^{n+1}}\big(f(\hat u_{i+1}^n)-f(\hat u_{i-1}^n)\big)$$
  This scheme can be written in conservative form as follows,
  $$u_i^{n+1}=\hat u_i^n-\frac{2\Delta t}{h_i^{n+1}+h_{i+1}^{n+1}}(F_{i+1/2}^n-F_{i-1/2}^n)$$
  with 
  $$F_{i+1/2}^n=\frac{f(\hat u_i^n)+f(\hat u_{i+1}^n)}{2}$$

  Easily we deduce that
  \begin{align}
    |u_i^{n+1}-\hat u_i^n|
      &\leq \frac{\Delta t}{2\min h_i^{n+1}}\max|f'||\hat u_{i+1}^n-\hat u_{i-1}^n|
       \leq \frac{CFL}{2}\left(|\hat u_{i+1}^n-\hat u_i^n|+|\hat u_i^n-\hat u_{i-1}^n|\right)\nonumber \\
      &\leq CFL \max\left\{|\hat u_{i+1}^n-\hat u_i^n|,|\hat u_i^n-\hat u_{i-1}^n|\right\}\nonumber
  \end{align}
  The constant $C$ in this case is chosen to be $C=CFL$, for this choice the Evolution requirement is satisfied. We refer to Figure \ref{Graph.FTCS.Burgers} for a 
  comparison graph between the uniform and non-uniform mesh case for the FTCS scheme.

\section{Conclusions}

In this work we investigated the creation and evolution of oscillations over non-uniform, adaptively redefined meshes. The mesh reconstruction is driven by the geometry of the numerical solution and the solution update is performed by interpolation over piecewise linear functions. The numerical schemes considered are 3-point non-uniform versions of oscillatory numerical schemes. The overall process is dictated by the Main Adaptive Scheme (MAS).

We prove under specific assumptions/requirements on the mesh reconstruction, the solution update and the numerical schemes that the numerical solution is of Bounded Total Variation; furthermore, under more strict assumptions, we prove that the increase of the Total Variation decreases with time. We also describe the diffusive property of the mesh reconstruction and solution update steps of MAS and finally we provide numerical test supporting the outcomes of this study.

\nocite{*}
\bibliographystyle{plain}
\bibliography{Bibliography}

\providecommand{\bysame}{\leavevmode\hbox to3em{\hrulefill}\thinspace}
\providecommand{\MR}{\relax\ifhmode\unskip\space\fi MR }
\providecommand{\MRhref}[2]{%
  \href{http://www.ams.org/mathscinet-getitem?mr=#1}{#2}
}
\providecommand{\href}[2]{#2}
\begin{thebibliography}{AKM01}

\bibitem[AD06]{Arvanitis.2006}
Ch. Arvanitis and A.~I. Delis, \emph{Behavior of finite volume schemes for
  hyperbolic conservation laws on adaptive redistributed spatial grids}, SIAM
  J. Sci. Comput. \textbf{28} (2006), 1927--1956.

\bibitem[AKM01]{Arvanitis.2001}
Ch. Arvanitis, Th. Katsaounis, and Ch. Makridakis, \emph{Adaptive finite
  element relaxation schemes for hyperbolic conservation laws}, Math. Model.
  Anal. Numer. \textbf{35} (2001), 17--33.

\bibitem[AMS08]{Sfakianakis.2008}
Ch. Arvanitis, Ch. Makridakis, and N.~Sfakianakis, \emph{Entropy conservative
  schemes and adaptive mesh selection for hyperbolic conservation laws},
  preprint (2008).

\bibitem[AMT04]{Arvanitis.2004}
Ch. Arvanitis, Ch. Makridakis, and A.~Tzavaras, \emph{Stability and convergence
  of a class of finite element schemes for hyperbolic systems of conservation
  laws}, SIAM J. Numer. Anal. \textbf{42} (2004), 1357--1393.

\bibitem[Arv08]{Arvanitis.2008}
Ch. Arvanitis, \emph{Mesh redistribution strategies and finite element method
  schemes for hyperbolic conservation laws}, J. Sci. Computing \textbf{34}
  (2008), 1--25.

\bibitem[CF48]{Courant.1948}
R.~Courant and K.~Friedrichs, \emph{Supersonic flow and shock waves}, Springer,
  1948.

\bibitem[DD87]{Dorfi.1987}
E.~Dorfi and L.~Drury, \emph{Simple adaptive grids for 1d initial value
  problems}, J. Computational Physics \textbf{69} (1987), 175--195.

\bibitem[For88]{Fornberg.1988}
B.~Fornberg, \emph{Generation of finite difference formulas on arbitrary spaced
  grids}, Mathematics of Computations \textbf{51} (1988), 699--706.

\bibitem[GR90]{Godlewski.1990}
E.~Godlewski and P.~A. Raviart, \emph{Hyperbolic systems of conservation laws},
  Ellipses, 1990.

\bibitem[Hed83]{Hedstrom.1975}
G.~W. Hedstrom, \emph{Models of difference schemes for $u_t+u_x=0$ by partial
  differential equations}, J. Comput. Physics \textbf{50} (1983), 235--269.

\bibitem[HH83]{Harten.1983}
A.~Harten and J.~Hyman, \emph{Self adjusting grid methods for one-dimensional
  hyperbolic conservation laws}, J. Comput. Physics \textbf{50} (1983),
  235--269.

\bibitem[Hir68]{Hirt.1968}
C.~W. Hirt, \emph{Heuristic stability theory for finite difference schemes}, J.
  Comput. Physics \textbf{2} (1968), 339--355.

\bibitem[Kro97]{Kroner.1997}
D.~Kroener, \emph{Numerical schemes for conservation laws}, Wiley Teubner,
  1997.

\bibitem[Lax54]{Lax.1954}
P.~D. Lax, \emph{Weak solutions of nonlinear hyperbolic equations and their
  numerical computation}, Comm. pure and applied mathematics \textbf{7} (1954),
  159--193.

\bibitem[LeV92]{LeVeque.1992}
R.~LeVeque, \emph{Numerical methods for conservation laws}, second ed.,
  Birkhauser Verlag, 1992.

\bibitem[LeV02]{LeVeque.2002}
\bysame, \emph{Finite volume methods for hyperbolic problems}, first ed.,
  Cambridge texts in applied mathematics, 2002.

\bibitem[LR56]{Lax.1956}
P.~D. Lax and R.~Richtmyer, \emph{Survey of the stability of linear finite
  difference equations}, Comm. Pure Appl. Math. \textbf{9} (1956), 267--293.

\bibitem[LW60]{Lax.1960}
P.~D. Lax and B.~Wendroff, \emph{Systems of conservation laws}, Comm. Pure
  Appl. Math. \textbf{13} (1960), 217--237.

\bibitem[Smo91]{Smoller.1991}
J.~Smoller, \emph{Shock waves and reaction-diffusion equations}, second ed.,
  Springer-Verlag, 1991.

\bibitem[Tho95]{Thomas.1995}
J.W Thomas, \emph{Numerical partial differential equations - finite difference
  methods}, Springer, 1995.

\bibitem[Tho99]{Thomas.1999}
\bysame, \emph{Numerical partial differential equations - conservation laws and
  elliptic equations}, Springer, 1999.

\bibitem[TT03]{Tang.2003}
H.~Tang and T.~Tang, \emph{Adaptive mesh methods for one- and two-dimensional
  hyperbolic conservation laws}, SIAM J. Numerical Analysis \textbf{41} (2003),
  487--515.

\end{thebibliography}

\end{document}